\numberwithin{figure}{section}
\tikzset{
every picture/.style=thick,
every node/.style={circle, draw, inner sep=3pt},
bluenode/.style={circle, draw=black, fill=blue!40, very thick, minimum size=6mm},
whitenode/.style={circle, draw=black, fill=black!10, very thick, minimum size=6mm},
squarednode/.style={rectangle, draw=red!60, fill=red!5, very thick, minimum size=5mm},
every loop/.style={min distance=8mm} 
} 
\theoremstyle{plain}
\newtheorem{thm}{Theorem}[section]
\newtheorem{lem}[thm]{Lemma}
\newtheorem{cor}[thm]{Corollary}
\newtheorem{prop}[thm]{Proposition}
\newtheorem{obs}[thm]{Observation}
\theoremstyle{definition}
\newtheorem{defn}[thm]{Definition}
\newtheorem{ex}[thm]{Example}
\newtheorem{rem}[thm]{Remark}
\newcommand\p{\mathcal{P}}
\newcommand\s{\mathcal{S}}
\newcommand\M{\operatorname{M}}
\newcommand\mr{\operatorname{mr}}
\newcommand\Z{\operatorname{Z}}
\newcommand{\Zsharp}{\Z_{\mathrm{RL}}}
\newcommand{\RL}{\operatorname{RL}}
\newcommand{\rl}{\operatorname{RL}}
\newcommand{\rsl}{\operatorname{RSL}}
\newcommand{\tw}{\operatorname{tw}}
\newcommand{\nul}{\operatorname{null}}
\newcommand{\mult}{\operatorname{mult}}
\newcommand{\diag}{\operatorname{diag}}
\newcommand{\sgn}{\operatorname{sgn}}
\newcommand{\spec}{\operatorname{spec}}
\newcommand{\tr}{\operatorname{tr}}
\def\B#1{B^{[#1]}}
\def\Ba#1{B_a^{[#1]}}
\newcommand{\lam}{\lambda}
\newcommand{\bit}{\begin{itemize}}
\newcommand{\eit}{\end{itemize}}
\newcommand{\ben}{\begin{enumerate}}
\newcommand{\een}{\end{enumerate}}
\newcommand{\beq}{\begin{equation}}
\newcommand{\eeq}{\end{equation}}
\newcommand{\bea}{\begin{eqnarray*}}
\newcommand{\eea}{\end{eqnarray*}}
\newcommand{\bpf}{\begin{proof}}
\newcommand{\epf}{\end{proof}\ms}
\newcommand{\ms}{\medskip}
\newcommand{\lc}{\left\lceil}
\newcommand{\rc}{\right\rceil}
\newcommand{\du}{\,\dot{\cup}\,}
\title{Rigid linkages and partial zero forcing}
\author{Daniela Ferrero\\
\small Department of Mathematics \\ [-.8ex]
\small Texas State University \\ [-.8ex]
 \small San Marcos, TX 78666, USA \\ [-.8ex]
 \small dferrero@txstate.edu
 \and Mary Flagg \\ 
 \small Department of Mathematics \\ [-.8ex]
 \small  Computer Science and Cooperative Engineering \\ [-.8ex]
\small University of St. Thomas \\ [-.8ex] 
\small 3800 Montrose,   Houston, TX 77006, USA  \\[-.8ex] 
\small flaggm@stthom.edu
\and H.~Tracy Hall \\ 
 \small  NewVistas, LLC \\ [-.8ex] 
 \small Provo UT 84606, USA\\ [-.8ex]
 \small h.tracy@gmail.com
\and Leslie Hogben \\
\small Department of Mathematics \\ [-.8ex]
\small Iowa State University \\ [-.8ex] 
\small Ames, IA 50011, USA  \\ [-.8ex] 
\small hogben@iastate.edu \\   
\small and \\ 
\small American Institute of Mathematics \\ [-.8ex]
  \small 600 E. Brokaw Road \\ [-.8ex]
  \small San Jose, CA 95112, USA \\ [-.8ex] 
   \small hogben@aimath.org
   \and Jephian\,C.-H.\,Lin \\ 
   \small Department of Mathematics \\ [-.8ex] 
   \small   Iowa State University\\[-.8ex] 
     \small  Ames, IA 50011, USA \\[-.8ex] 
      \small  jephianlin@gmail.com
\and Seth\,A.\,Meyer
\\
\small Mathematics Discipline \\[-.8ex] 
 \small St. Norbert College \\[-.8ex] 
\small  De Pere, WI 54115, USA  \\[-.8ex] 
 \small seth.meyer@snc.edu
 \and Shahla Nasserasr\\
\small Department of Mathematics  \\[-.8ex] 
\small Nova Southeastern University  \\[-.8ex] 
\small Ft Lauderdale, FL,  33314, USA \\[-.8ex] 
\small  shahla.nasserasr@gmail.com
\and Bryan Shader \\
\small Department of Mathematics \\ [-.8ex] 
\small  University of Wyoming \\ [-.8ex] 
 \small  Laramie, WY 82071, USA \\ [-.8ex] 
\small  bshader@uwyo.edu}
\begin{document}

\maketitle\vspace{-10pt}

 \begin{abstract} 
Connections between 
  vital linkages 
 and zero forcing
 are established.  Specifically,   the notion of a rigid linkage is introduced as a special kind of unique linkage  and it is shown that spanning forcing paths of a zero forcing process  form a  spanning rigid linkage and thus  a vital linkage. A related generalization of zero forcing that produces a rigid linkage via a coloring process is developed.  One of the motivations for introducing zero forcing is to provide an upper bound on
the maximum multiplicity of an eigenvalue  among the real symmetric matrices described by a graph.
Rigid linkages and a related notion of rigid shortest linkages are utilized to obtain bounds on the multiplicities of eigenvalues  of this family of matrices.
 \\
 {\bf Key words.} Linkage,  Vital, Rigid, Zero Forcing, Inverse Eigenvalue Problem\end{abstract}

\section{Introduction}\label{prelims}

Throughout this paper  $G=(V(G), E(G))$ is a (simple, undirected) graph. We refer the reader to \cite{D17} for standard graph theoretic terminology.   Robertson and Seymour defined unique linkages in \cite{RSxxi}.
   A \emph{linkage} in $G$ is a subgraph whose connected components are paths (as is customary in the literature, a single vertex can be considered as a path, in which case the set of endpoints is a singleton  rather than a pair).  The \emph{order} of a linkage is its number of components.  A linkage $\p$ in a graph $G$ is a \emph{spanning} linkage if $V(\p)=V(G)$. 
 The \emph{pattern} of a linkage $\p=\{p_i\}_{i=1}^t$ is the set $\{\{\alpha_1,\beta_1\},\{\alpha_2,\beta_2\},\ldots,\{\alpha_t,\beta_t\}\}$, where  $\{\alpha_i,\beta_i\}$ is the set of endpoints of $p_i$.  
   Any linkage uniquely determines its pattern.  A linkage is {\em unique} if it is the only linkage with its pattern, and a linkage is \emph{vital} if it is both a unique and a spanning linkage.  

\begin{defn}   
Let $\alpha, \beta\subseteq V(G)$. 
A linkage $\p$ is an \emph{$(\alpha,\beta)$-linkage} if $\alpha$ consists of one endpoint of each path in $\p$ and $\beta$ consists of the other endpoints of the paths.  In the case that the path is a single vertex, the vertex is in both $\alpha$ and $\beta$.
\end{defn}  

Observe that an $(\alpha,\beta)$-linkage is the same as a $(\beta,\alpha)$-linkage.
Note that if $\p$ is an $(\alpha,\beta)$-linkage of order $t$, then necessarily $|\alpha|=|\beta|=t$.   Given a linkage $\p=\{p_i\}_{i=1}^t$, we can construct sets $\alpha$ and $\beta$ such that $\p$ is an $(\alpha,\beta)$-linkage by starting with $\alpha =\beta=\{v_i:p_i=(v_i)\}$ (the set of vertices in one-vertex paths in $\p$).  Then for each $i$ such that $p_i$ has more than one vertex, choose one endpoint of $p_i$ to place in $\alpha$ and place the other endpoint in $\beta$. 
Note that given a linkage $\p$ of order $t$, there can be up to $2^{t-1}$ distinct pairs  $(\alpha,\beta)$ for which $\p$ is an $(\alpha,\beta)$-linkage. 

\begin{defn}   A linkage $\p$ is {\em $(\alpha,\beta)$-rigid} if $\p$ is the unique $(\alpha,\beta)$-linkage in $G$.  A linkage $\p$ is {\em rigid} if $\p$ is $(\alpha,\beta)$-rigid for some $\alpha$ and $\beta$ such that $\p$ is an $(\alpha,\beta)$-linkage.
\end{defn}

\begin{rem}
A unique linkage (or even a vital linkage) may not be  rigid, because for a unique linkage the endpoints of each path are fixed, whereas for an  $(\alpha,\beta)$-rigid linkage only the sets of endpoints are fixed. This is illustrated in Example \ref{exuniquenotrigid}. However, an $(\alpha,\beta)$-rigid linkage is necessarily unique. To see this, suppose  $\p=\{p_i\}_{i=1}^t$ is a rigid linkage with pattern  $\{\{\alpha_i,\beta_i\}:i=1,\dots,t\}$, so for $\alpha=\{\alpha_i\}_{i=1}^t$ and $\beta=\{\beta_i\}_{i=1}^t$, $\p$ is an $(\alpha,\beta)$-rigid linkage. Then any other linkage with the same pattern would contradict that $\p$ is $(\alpha,\beta)$-rigid.  
\end{rem}

\begin{ex}\label{exuniquenotrigid}
Consider the complete graph $K_4$, with the vertices labeled 1, 2, 3, 4. 
The linkage $\p=\{(1,2),(3,4)\}$ is a unique linkage.  There are two ways to make $\p$ an $(\alpha,\beta)$-linkage (up to swapping $\alpha$ and $\beta$).    For $\alpha =\{1,3\}$ and $\beta = \{2,4\}$, $\p$ is not $(\alpha,\beta)$-rigid   because $\p'=\{(1,4),(3,2)\}$ is another $(\alpha,\beta)$-linkage.  For $\hat\alpha=\{1,4\}$ and $\hat\beta=\{2,3\}$,   $\p$ is not $(\hat\alpha,\hat\beta)$-rigid because $\p'=\{(1,3),(4,2)\}$ is another $(\hat\alpha,\hat\beta)$-linkage.  Thus $\p$ is not rigid.
\end{ex}

\begin{obs}  In a tree every linkage is rigid.
\end{obs}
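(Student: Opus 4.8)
The plan is to show that for any linkage $\p = \{p_i\}_{i=1}^t$ in a tree $T$, with endpoint sets $\alpha = \{\alpha_i\}_{i=1}^t$ and $\beta = \{\beta_i\}_{i=1}^t$ chosen so that each $p_i$ has endpoints $\alpha_i$ and $\beta_i$, the linkage $\p$ is the unique $(\alpha,\beta)$-linkage in $T$. The key structural fact I would invoke is that in a tree there is a \emph{unique} path between any two vertices $u$ and $v$; denote it $T[u,v]$. So each $p_i$ is forced to equal $T[\alpha_i,\beta_i]$.

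First I would argue that the pairing of $\alpha$ with $\beta$ is itself forced, not just the paths once the pairing is known. This is where the tree structure does real work. Suppose $\qq = \{q_i\}_{i=1}^t$ is another $(\alpha,\beta)$-linkage; then $\qq$ induces a bijection $\sigma$ on indices with $q_i$ joining $\alpha_i$ to $\beta_{\sigma(i)}$, and necessarily $q_i = T[\alpha_i,\beta_{\sigma(i)}]$. The goal is to show $\sigma$ is the identity. I would consider a vertex of the tree that is a leaf relative to the subgraph spanned by the linkage, or more directly induct on $t$: pick an endpoint, say $\alpha_1$, and look at how the forced path $T[\alpha_1,\beta_1]$ behaves versus an alternative $T[\alpha_1,\beta_{\sigma(1)}]$. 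Since the two linkages $\p$ and $\qq$ are each vertex-disjoint unions covering the same multiset of edges is \emph{not} automatic, so instead I would compare edge sets: each edge of $T$ used by $\p$ must be used by $\qq$ and conversely, because a linkage path in a tree between two fixed vertices is rigid and the disjointness constraints propagate. Concretely, if $\alpha_i$ and $\beta_j$ lie in the same component after deleting an appropriate edge, disjointness forces the pairing.

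The cleanest route is probably induction on the number of vertices of $T$, or on $t$. In the base case $t=1$ a single path between two fixed vertices in a tree is unique. For the inductive step, locate a path $p_k$ in $\p$ containing a leaf $\ell$ of $T$ as one of its endpoints, or if $\p$ does not reach all leaves, prune a leaf not in $V(\p)$ and recurse. If $\ell \in \alpha \cup \beta$, say $\ell = \alpha_k$, then in any $(\alpha,\beta)$-linkage the path starting at $\ell$ must use the unique edge of $T$ at $\ell$; following this edge determines more of the path, and ultimately $\ell$ must be matched to the same partner as in $\p$ because all other $\beta_j$ are separated from $\ell$ in a way incompatible with disjointness. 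Deleting $p_k$ and the vertices it uses (and cleaning up) yields a smaller forest where the remaining linkage is an $(\alpha \setminus \{\alpha_k\}, \beta \setminus \{\beta_k\})$-linkage, to which induction applies; one must check that the remaining structure is still a union of trees and that no alternative linkage in $T$ could have used vertices of $p_k$ differently, which again follows from the uniqueness of tree paths.

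The main obstacle I anticipate is \textbf{ruling out a nontrivial permutation $\sigma$ of the endpoint pairing} — i.e., showing that $\alpha_i$ cannot be rematched to some $\beta_j$ with $j \neq i$ while keeping all paths vertex-disjoint. Rigidity of individual paths (uniqueness of $T[u,v]$) is immediate; the subtlety is the global combinatorial obstruction to re-pairing. I expect this to come down to a parity/disjointness argument: if $\sigma$ had a nontrivial cycle, composing the forced tree-paths around that cycle would create a closed walk in the tree traversing each edge an even number of times, and tracking the disjointness constraints of $\p$ versus $\qq$ along this walk would force a shared vertex, contradicting that $\qq$ is a linkage. Making this precise — perhaps by a minimal-counterexample argument on the length of a nontrivial $\sigma$-cycle, or by the leaf-pruning induction above — is the heart of the proof.
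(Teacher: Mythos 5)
The paper gives no proof of this observation at all---it is stated as immediate---so the only question is whether your proposal would stand on its own. Your overall strategy is right, and you correctly isolate the one nontrivial point: since a tree has a unique $u$--$v$ path, each path of a competing $(\alpha,\beta)$-linkage $\qq$ is determined by its endpoint pair, so the only way $\qq\neq\p$ can happen is via a different pairing $\tau\neq\sigma$ of $\alpha$ with $\beta$. But that is exactly where your write-up stops: you say yourself that making the re-pairing obstruction precise ``is the heart of the proof,'' and neither of your two routes is carried out. In the leaf-pruning induction, the decisive sentence (``ultimately $\ell$ must be matched to the same partner as in $\p$ because all other $\beta_j$ are separated from $\ell$ in a way incompatible with disjointness'') is an assertion of the very thing to be proved, and the claim that $\p$ and $\qq$ must use the same edges is justified only by ``the disjointness constraints propagate.'' So as written the proposal has a genuine gap: the key step is named, not proven. (Note also that you are aiming at the stronger statement that the $(\alpha,\beta)$-linkage in a tree is unique for \emph{every} admissible choice of $(\alpha,\beta)$; that is fine, since rigidity only needs one such choice.)

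The good news is that your parity idea does close the gap if pushed through. Suppose $\sigma\neq\tau$ and take a cycle $a_1,b_1,a_2,b_2,\ldots,a_k,b_k$ ($k\ge 2$) of the symmetric difference of the two pairings, so $\sigma(a_i)=b_i$ and $\tau(a_{i+1})=b_i$ (indices mod $k$). Concatenating the $\sigma$-paths $T[a_i,b_i]$ with the reversed $\tau$-paths $T[a_{i+1},b_i]$ gives a closed walk in the tree $T$, so each edge of $T$ is traversed an even number of times. A path crosses a given tree edge at most once, and the $\sigma$-paths (resp.\ the $\tau$-paths) in the cycle are pairwise vertex-disjoint, so each family crosses a given edge at most once; parity then forces every edge of the walk to be used exactly once by each family, i.e., the two families have the same edge set. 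Vertex-disjointness makes the non-singleton paths of each family precisely the connected components of that common edge set, so the non-singleton $\sigma$-paths and $\tau$-paths coincide as paths; comparing endpoints---using that the $a_i$ are distinct, the $b_i$ are distinct, and that a vertex of $\alpha\cap\beta$ must be a singleton path in both linkages---contradicts $k\ge 2$, while the all-singleton case contradicts distinctness outright. Writing out this step turns your sketch into a complete proof of the observation.
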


In \cite{RSxxi}, Robertson and Seymour define the set of {\em terminals} of a linkage to be the set of endpoints of the paths in the linkage and define the pattern as a partition of the terminals with two terminals in the same partite set if and only if they are endpoints of the same path. This definition of pattern 
is equivalent to the definition we gave at the beginning, namely  $\{\{\alpha_1,\beta_1\},\{\alpha_2,\beta_2\},\ldots,\{\alpha_t,\beta_t\}\}$, where $\alpha_i$ and $\beta_i$ are the endpoints of the $i$th path in the linkage. The partition of the terminals 
is a set of sets of cardinality one and two.  A set of size one occurs exactly  when a path is a single vertex, in which case  $\alpha_i=\beta_i$ and    $\{\alpha_i,\beta_i\}=\{\alpha_i\}$.   

Zero forcing is a graph coloring process introduced independently in combinatorial matrix theory  \cite{AIM08} and in control of quantum systems \cite{BG07}, and the {\em zero forcing number} $\Z(G)$ is the minimum number of blue vertices needed to color all the vertices of  the graph blue; the formal definition of zero forcing is given later in this section.  Zero forcing also appears as part of the power domination process used to determine optimal placement of  monitoring units in an electric network (see \cite{REUF15}) and in fast-mixed graph searching in computer science (see \cite{Y13}).

A {\it tree decomposition} of a graph $G=(V(G), E(G))$ is a pair $(T, W)$, where $T$ is a tree and 
$W =\{W_x : x \in V(T) \}$ is a collection of subsets of vertices of $G$ 
satisfying 
\begin{itemize}
\item[\rm 1.] Each vertex of $V$ is in at least one $W_x$.
\item[\rm 2.] Each edge of G has both ends in some $W_x$.
\item[\rm 3.] 
Whenever $u,v,w$ are vertices such that $v$ lies on a path in $T$ from $u$ to $w$, then $W_u \cap W_w \subseteq W_v$.
\end{itemize}
The {\it width} of a tree decomposition is one less than the maximum cardinality of a $W_x$.  The {\it tree-width}  of $G$, denoted $\mbox{tw}(G)$, is the minimum width of any tree
decomposition of $G$.
 For a nonnegative integer $p$,  a linkage is a $p$-linkage if the cardinality of its terminal set is less than or equal to $p$ \cite{RSxxi}.  A main result in \cite{RSxxi} is that for every nonnegative integer $p$ there exists an integer $w\ge 0$ such that  every graph with a vital $p$-linkage  has tree-width at most $w$.  We obtain an analogous result that the order of a spanning rigid linkage is an upper bound on tree-width (Corollary \ref{rigidlinktw}).  To establish this, in Section \ref{s:RLforce} we define rigid linkage forcing and show that a linkage is rigid if and only if its paths can be produced by a rigid linkage forcing process (Theorem \ref{Zsharplinkage}).  Furthermore, a spanning rigid linkage is also the set of paths produced by a standard zero forcing process  (Corollary \ref{RLchainZ}), so there is a   rigid linkage of order $\Z(G)$.  

 The  introduction of zero forcing in \cite{AIM08} was motivated by providing an upper bound for the maximum multiplicity of the eigenvalues among the real symmetric matrices having off-diagonal nonzero pattern described by the edges of a given graph $G$.  Maximum eigenvalue multiplicity, although still an open question except for certain families of graphs,  is one part of the more general problem of determining the spectra (multisets of eigenvalues) of this set of matrices described by $G$, which is the  {\em inverse eigenvalue problem} for the graph $G$.  In Section \ref{sevalmult} we use rigid linkage forcing, without forcing the entire graph, to obtain bounds on the multiplicities of eigenvalues of matrices associated with $G$.   In Section \ref{app}, we use the techniques from Section \ref{sevalmult} to give a new infinite family of graphs 
 and multiplicity lists for which the eigenvalues of matrices with the given graph and multiplicity list must satisfy certain linear equations.  
We refine the idea of rigid linkage to rigid shortest linkage to improve these bounds in Section \ref{srsl}, and apply this in Section  \ref{comp}.  
We conclude this section with some basic properties of rigid linkages. 
\begin{obs}
\label{obs:sub}
Let $G$ be a graph and $\p$ be an $(\alpha,\beta)$-rigid linkage in $G$.  Then for any subgraph $H$ of $G$ that contains $\p$ as a subgraph, $\p$ is an $(\alpha,\beta)$-rigid linkage in $H$.
\end{obs}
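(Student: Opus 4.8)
The plan is to argue directly from the definitions, using the transitivity of the subgraph relation. The key point is that whether a linkage $\p$ is an $(\alpha,\beta)$-linkage is an intrinsic property of $\p$ itself (it only refers to which vertices are the endpoints of the component paths), not of the ambient graph, so this property is inherited unchanged when we pass between $G$ and a subgraph $H$ containing $\p$.

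First I would observe that, since $\p$ is a subgraph of $H$ by hypothesis and $\p$ is an $(\alpha,\beta)$-linkage (as witnessed in $G$), the same partition of endpoints shows that $\p$ is an $(\alpha,\beta)$-linkage in $H$; in particular there is at least one $(\alpha,\beta)$-linkage in $H$. Next, to show uniqueness, I would suppose for contradiction that $\p'$ is an $(\alpha,\beta)$-linkage in $H$ with $\p'\neq\p$. Because $H$ is a subgraph of $G$, every subgraph of $H$ is a subgraph of $G$; hence $\p'$ is a subgraph of $G$ whose components are paths realizing exactly the endpoint sets $\alpha$ and $\beta$, i.e.\ $\p'$ is an $(\alpha,\beta)$-linkage in $G$ distinct from $\p$. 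This contradicts the hypothesis that $\p$ is the unique $(\alpha,\beta)$-linkage in $G$, so no such $\p'$ exists and $\p$ is $(\alpha,\beta)$-rigid in $H$.

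There is essentially no hard step here: the statement is a monotonicity (or "heredity") property, and the only thing to be careful about is to record that ``$\p$ is an $(\alpha,\beta)$-linkage'' does not depend on the host graph, so that the counterexample linkage $\p'$ transfers from $H$ to $G$ verbatim. If anything is an obstacle, it is merely making sure the bookkeeping about one-vertex paths (where a vertex lies in both $\alpha$ and $\beta$) is consistent between the two graphs, but this is immediate from the definition of an $(\alpha,\beta)$-linkage.
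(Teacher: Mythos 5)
Your argument is correct and is exactly the reasoning the paper leaves implicit (the statement is given as an Observation with no written proof): any $(\alpha,\beta)$-linkage in $H$ distinct from $\p$ would also be an $(\alpha,\beta)$-linkage in $G$, contradicting rigidity of $\p$ in $G$. Nothing further is needed.
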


\begin{prop}\label{rigidsublinkage}
Suppose $\alpha=\{\alpha_i\}_{i=1}^t$, $\beta=\{\beta_i\}_{i=1}^t$, and $\p=\{p_i\}_{i=1}^t$ is an $(\alpha,\beta)$-rigid linkage, with the endpoints of $p_i$ denoted by $\alpha_i$ and $\beta_i$.  Let $\mathcal{I}\subseteq\{1,\ldots ,t\}$.  For $i\in \mathcal{I}$, let $p'_i$ be a subpath of $p_i$ with endpoints $\alpha'_i$ and $\beta'_i$ (with $\alpha_i$, $\alpha'_i$, $\beta'_i$,  $\beta_i$ following the path order of $p_i$).  Define $\alpha'=\{\alpha'_i\}_{i\in\mathcal{I}}$ and $\beta'=\{\beta'_i\}_{i\in\mathcal{I}}$.  Then $\p'=\{p'_i\}_{i\in\mathcal{I}}$ is an $(\alpha',\beta')$-rigid linkage in the graph $G-(V(\p)\setminus V(\p'))$.
\end{prop}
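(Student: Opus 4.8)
The plan is to prove this by contradiction: suppose $\p'$ is not $(\alpha',\beta')$-rigid in $G' := G-(V(\p)\setminus V(\p'))$, so there is a different $(\alpha',\beta')$-linkage $\qq' = \{q'_i\}_{i\in\mathcal{I}}$ in $G'$ with the same pattern as $\p'$ (matching $\alpha'_i$ to $\beta'_i$, after possibly relabeling indices within $\mathcal{I}$). The goal is to splice the paths of $\qq'$ back into $\p$ to build a new $(\alpha,\beta)$-linkage in $G$ that differs from $\p$, contradicting $(\alpha,\beta)$-rigidity of $\p$. Concretely, for each $i\in\mathcal{I}$, let $a_i$ be the subpath of $p_i$ from $\alpha_i$ to $\alpha'_i$ and $b_i$ the subpath from $\beta'_i$ to $\beta_i$; these lie entirely outside $V(\qq')$ except for their shared endpoints $\alpha'_i,\beta'_i$ with $q'_i$ (they lie in $V(\p)\setminus V(\p')$ together with $\{\alpha'_i,\beta'_i\}$, which is disjoint from $V(G')\setminus\{\alpha'_i,\beta'_i\}$ and in particular from the interior of $q'_i$). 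For $i\notin\mathcal{I}$, keep $p_i$ unchanged. I would set $\tilde p_i := a_i \cup q'_i \cup b_i$ for $i\in\mathcal{I}$ and $\tilde p_i := p_i$ otherwise, and claim $\tilde\p := \{\tilde p_i\}_{i=1}^t$ is an $(\alpha,\beta)$-linkage in $G$ distinct from $\p$.

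The two things to check are that $\tilde\p$ is genuinely a linkage (its paths are pairwise vertex-disjoint and each $\tilde p_i$ really is a path) and that it differs from $\p$. For the first: $a_i$ and $b_i$ are contained in $V(p_i)$ which, for distinct $i$, are disjoint; the concatenated pieces $q'_i$ live in $G'$ and are pairwise disjoint since $\qq'$ is a linkage; and the cross-interactions between an $a_i$ or $b_i$ (with $i\in\mathcal{I}$) and a $q'_j$ ($j\in\mathcal{I}$, $j\ne i$) vanish because $V(a_i)\cup V(b_i)\subseteq V(p_i)$ while $V(q'_j)\setminus\{\alpha'_j,\beta'_j\} \subseteq V(G')$; since $V(p_i)\cap V(p_j)=\emptyset$ we get $\alpha'_j,\beta'_j\notin V(p_i)$, and the only vertices of $a_i,b_i$ that could touch $q'_j$ would have to be among $\{\alpha'_j,\beta'_j\}$. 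Finally each $\tilde p_i$ is a path because $a_i$ meets $q'_i$ only at $\alpha'_i$, $b_i$ meets $q'_i$ only at $\beta'_i$, and $a_i$ meets $b_i$ only possibly at shared vertices of $p_i$—but $a_i$ is the $\alpha_i$-side and $b_i$ the $\beta_i$-side of $p_i$, which overlap in at most the degenerate case where the whole subpath $p'_i$ is a single vertex, in which case $\alpha'_i=\beta'_i$ and one handles it as the single-vertex convention allows. Each $\tilde p_i$ has endpoints $\alpha_i$ (from $a_i$) and $\beta_i$ (from $b_i$), so $\tilde\p$ is an $(\alpha,\beta)$-linkage. For distinctness: since $\qq'\ne\p'$, some $q'_i$ differs from $p'_i$ as a path, hence $\tilde p_i = a_i\cup q'_i\cup b_i$ differs from $p_i = a_i\cup p'_i\cup b_i$, so $\tilde\p\ne\p$.

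The main obstacle is bookkeeping around the degenerate single-vertex cases and around the matching of indices: a priori the linkage $\qq'$ could realize the pattern of $\p'$ with a different pairing of its components to the index set $\mathcal{I}$, so I should first note that since the pattern $\{\{\alpha'_i,\beta'_i\}\}$ is a fixed set of (one- or two-element) sets, we may index $\qq'$ so that $q'_i$ has endpoints $\{\alpha'_i,\beta'_i\}$; then the argument above matches $\alpha_i$ to $\alpha'_i$ along $a_i$ correctly. (One subtlety: if some $\{\alpha'_i,\beta'_i\}$ equals $\{\alpha'_j,\beta'_j\}$ as two-element sets for $i\ne j$, i.e.\ the pattern is a multiset with repetition, the splicing of $a_i$ versus $a_j$ could get crossed; but this cannot happen for a rigid linkage $\p$, since two paths in a linkage with the same unordered endpoint pair would already give a second $(\alpha,\beta)$-linkage of $G$ by swapping them.) Once this indexing is fixed, everything is a disjointness check, and the disjointness of $V(\p)\setminus V(\p')$ from the interiors of the $q'_i$'s—which is exactly why we delete those vertices in forming $G'$—makes it go through. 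I would also remark that the case $\mathcal{I}\subsetneq\{1,\dots,t\}$ versus $\mathcal{I}=\{1,\dots,t\}$ needs no separate treatment since the paths $p_i$ with $i\notin\mathcal{I}$ survive in $G'$ only partially, but we never use them inside $G'$; we only reattach them in $G$ when reconstructing $\tilde\p$.
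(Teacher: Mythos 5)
Your overall strategy is the same as the paper's: take a competing $(\alpha',\beta')$-linkage in $G'=G-(V(\p)\setminus V(\p'))$ and splice it back into $\p$ along the stubs $a_i$ (from $\alpha_i$ to $\alpha'_i$) and $b_i$ (from $\beta'_i$ to $\beta_i$) to produce a second $(\alpha,\beta)$-linkage, contradicting rigidity of $\p$; your disjointness checks for that splice are essentially right, since everything outside $\{\alpha'_i,\beta'_i\}$ on the stubs lies in the deleted set $V(\p)\setminus V(\p')$.

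However, there is a genuine flaw in how you set up the competing linkage. You assume that any $(\alpha',\beta')$-linkage $\qq'\neq\p'$ ``has the same pattern as $\p'$'' and can be indexed so that $q'_i$ has endpoints $\{\alpha'_i,\beta'_i\}$. That is false: by the definition in the paper, an $(\alpha',\beta')$-linkage only requires that each path have one endpoint in $\alpha'$ and the other in $\beta'$; the pairing between $\alpha'$ and $\beta'$ is \emph{not} fixed. (This is exactly the distinction between a unique linkage and a rigid linkage that the paper emphasizes; e.g.\ in $K_4$ with $\alpha'=\{1,3\}$, $\beta'=\{2,4\}$, both $\{(1,2),(3,4)\}$ and $\{(1,4),(3,2)\}$ are $(\alpha',\beta')$-linkages with different patterns.) As written, your contradiction argument only excludes competitors with the same pattern, so it proves that $\p'$ is a unique linkage in $G'$, which is strictly weaker than $(\alpha',\beta')$-rigidity. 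Your parenthetical justification (a repeated endpoint pair would let you ``swap'' two paths) addresses a situation that cannot occur in any linkage, since disjoint paths cannot share endpoints, and does not repair this. The fix is easy and keeps your construction intact: let $\qq'$ pair $\alpha'_i$ with $\beta'_{\sigma(i)}$ for some bijection $\sigma$ of $\mathcal{I}$, and splice $a_i$ at the $\alpha'_i$-end and $b_{\sigma(i)}$ at the $\beta'_{\sigma(i)}$-end of the corresponding path. The resulting family joins $\alpha_i$ to $\beta_{\sigma(i)}$, so it is still an $(\alpha,\beta)$-linkage (its pattern may differ from that of $\p$, which is irrelevant), it is distinct from $\p$ because its restriction to $V(G')$ is $\qq'\neq\p'$, and this contradicts $(\alpha,\beta)$-rigidity of $\p$, which forbids \emph{any} other $(\alpha,\beta)$-linkage regardless of pattern.
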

\begin{proof}
Suppose $\p''\neq \p'$ is an $(\alpha',\beta')$-linkage in $G-(V(\p)\setminus V(\p'))$.  Since $\p''$ does not use any vertex in $V(\p)\setminus V(\p')$, we may replace $\p'$ by $\p''$ in $\p$ and obtain another $(\alpha,\beta)$-linkage, which is a contradiction to the fact that $\p$ is $(\alpha,\beta)$-rigid.
\end{proof}


\section{Rigid linkage forcing}\label{s:RLforce}

In this section we define a  new forcing process (related to zero forcing)  that describes a rigid linkage in  terms of forcing paths.
To provide context for the next definition, we first recall the notion of the zero forcing number and its associated color change rule (see \cite{cancun} for further details).
Let $G$ be a graph. Throughout the process each vertex is colored  blue or white,  and one step of the process changes the color of one vertex from 
white to blue by application of the following \emph{color change rule} (CCR):
\begin{itemize}
\item (CCR-$\Z$)
 If $u$ is a blue vertex and exactly one neighbor $w$ of $u$ is white, then  change the color of $w$ to blue. 
\end{itemize}
 The coloring of $w$ by $u$ is called a  \emph{$\Z$-force} and is denoted by $u\to w$.

Given an initial coloring of $G$, the {\it derived set} is a set of blue vertices
obtained by applying the color-change rule until no more changes are possible, and this process is called a {\it zero forcing} process. 
A {\it zero forcing set} for
G is a subset of vertices $B$ such that if initially the vertices in $B$ are colored blue and the remaining
vertices are colored white, then the derived set is all the vertices of $G$. The zero forcing number $\Z(G)$ is
the minimum of $|B|$ over all zero forcing sets $B$ for $G$. 

We now are ready to define a new type of forcing process. 
  For $X\subseteq V(G)$ define the {\em boundary} $\partial_G(X)$ of $X$ to be the set of vertices not in $X$ that have 
at least one neighbor in $X$. 
 When $K$ is a subgraph of $G$, $\partial_G(K)=\partial_G(V(K))$.

\begin{defn}
Let $G$ be a graph. Each vertex is colored blue or white and  one step changes the color of one vertex from white to blue by application of the \emph{color change rule 
CCR-$\RL$} (defined below).   At each step some of the blue vertices are {\em active} and the others are {\em inactive} (an active vertex can become inactive but not vice versa). 
Step $0$ is completed by selecting an initial 
set of blue vertices denoted by $\B 0$ and defining the initial active set of vertices $\Ba 0$ to be $\B0$.
After step $k$ there is a set $\B k$ of blue vertices and an active set of blue vertices $\Ba k$.       An application of {CCR-$\RL$} to go from step $k$ to step $k+1$ (which may involve choice) is:
\ben[(1)]
\item\label{c:CCR-RL-1} Pick a component $K$ of $G-\B k$ such that $\partial_G(K)$ does not contain any inactive blue vertices. 
\item\label{c:CCR-RL-2} Select an  active blue vertex $u$ such that $w$ is the only white neighbor of $u$  in $K$:
\bit 
 \item Color $w$ blue, i.e., $\B {k+1}=\B k\cup\{w\}$.  
\item  $u$ becomes inactive and $w$ becomes active, i.e., $\Ba {k+1}=\Ba k \setminus \{u\}\cup\{w\}$.
\eit
The coloring of $w$ by $u$ is called an  \emph{$\RL$-force} and is denoted by $u\to w$.
\item  Step $k+1$ is completed once (1) and (2) have been performed (i.e., a component $K$ and an active vertex $u$ have 
been chosen, a vertex $w$ is forced, and the set of active vertices is updated).
\een
A {\em rigid linkage forcing process} or  \emph{$\RL$-forcing process} on $G$ is a sequence of steps applying  CCR-$\RL$ to an initial set $\B 0 \subseteq V(G)$ of blue vertices.  
At the end of an $\RL$-forcing process there may still  exist  $\RL$-forces. 
The minimum number of initial blue vertices 
$\B 0$
 such that there is a way to color all vertices blue is called the \emph{$\RL$-forcing number}, and is denoted by $\Zsharp(G)$. 
\end{defn}

As indicated by the name, the goal of $\RL$-forcing is to create a rigid linkage.
The next example shows why rule (1), which  prevents $\RL$-forcing into a white component with an inactive blue neighbor, is necessary.
\begin{figure}[h]\vspace{-10pt}
\begin{center}
\scalebox{1}{
\begin{tikzpicture}
\foreach \i/\ang in {1/45,2/135,3/225,4/315}{
\node[fill=blue, label={[label distance=-5pt]\ang:$\i$}] (\i) at (\ang:1) {};
}
\foreach \i/\ang in {5/0,6/90,7/180,8/270}{
\node[label={[label distance=-5pt]\ang:$\i$}] (\i) at (\ang:1.414) {};
}
\foreach \i/\ang in {9/45,10/135,11/225,12/315}{
\node[label={[label distance=-5pt]\ang:$\i$}] (\i) at (\ang:2) {};
}
\draw (1)--(2)--(3)--(4)--(1);
\draw (5)--(1)--(6)--(2)--(7)--(3)--(8)--(4)--(5);
\draw (5) edge (9)
      (6) edge (10)
      (7) edge (11)
      (8) edge (12);
\end{tikzpicture}}\vspace{-15pt}
\end{center}
\caption{A graph $G$ that illustrates the need to exclude inactive blue vertices for  CCR-RL\label{mary}}
\end{figure}
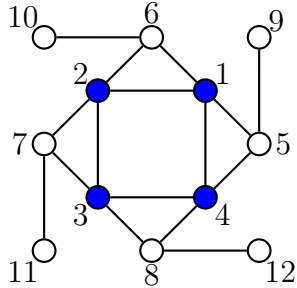
\begin{ex}\label{ex:mary}
Let $G$ be the graph in Figure \ref{mary}.  Suppose we start the $\RL$-forcing process with the initial blue vertices $\Ba 0=\alpha=\{1,2,3,4\}$.  Then we can $\RL$-force $1\rightarrow 6$, $2\rightarrow 7$, $3\rightarrow 8$, $6\to 10$, $7\to 11$, and $8\to 12$,   in that order.  But if the $\RL$-force $1\to 6$ occurs, then $5$ has an inactive blue neighbor, so $5$ cannot be $\RL$-forced. 
 There is no rigid linkage between $\alpha$ and $\beta=\{5,6,7,8\}$, because we have linkages 
\[\{(1,6),(2,7),(3,8),(4,5)\}\text{ and }\{(1,5),(2,6),(3,7),(4,8)\}.\] 
 Note also that any $\RL$-forcing process starting with $\alpha$ cannot force all of $5,6,7,8$ blue;  one of them must be sacrificed, and this is necessary to make the linkage rigid.   
 However, $\Z(G)=4$, because $\{4,9,10,11\}$ is a zero forcing set (and therefore is a rigid linkage forcing set by Proposition \ref{ZisRL} below).
\end{ex}

 In both zero forcing and rigid linkage forcing, there are usually choices to be made as to which possible  force is performed at each step, so an initial set of blue vertices usually results in many different forcing processes.  However, zero forcing has a unique result if  the process is carried out to until no more $\Z$-forces are possible \cite{AIM08}, whereas rigid linkage forcing may produce more than one final set 
 of blue vertices.
 For the graph $G$ in Example \ref{ex:mary},  the final set of blue vertices is $\B 6=\{1,2,3,4,6,7,8,10,11,12\}$ for the $\RL$-forcing process described therein.  If instead we use the $\RL$-forces $1\to 5, 4\to 8, 3\to 7, 5\to 9, 8\to 12, 7\to 11$, then we obtain $\B 6=\{1,2,3,4,5,7,8,9,11,12\}$.  Example \ref{ex:paw} gives  a graph and an initial set  for which there are final sets of blue vertices with different cardinalities. 

\begin{prop}\label{ZisRL}
Any zero forcing process on a graph $G$ is a rigid linkage forcing process.  
\end{prop}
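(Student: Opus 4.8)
The plan is to take an arbitrary zero forcing process on $G$ --- an initial blue set $\B 0$ together with a sequence of $\Z$-forces $u_1\to w_1, u_2\to w_2,\dots$ --- and supply the active/inactive labels that make this exact sequence a legal $\RL$-forcing process. I would declare, after each step, a blue vertex to be \emph{active} if it has not yet performed a $\Z$-force and \emph{inactive} if it has. Then $\Ba 0=\B 0$, and each $\Z$-force $u\to w$ deletes $u$ from the active set (it has just forced) and inserts the newly blue vertex $w$ (which has not forced), which is precisely the update prescribed in CCR-$\RL$; moreover an active vertex turns inactive only by forcing and never reverts, as required. So the task reduces to checking that each $\Z$-force in the sequence is admissible under CCR-$\RL$: that the white component it enters carries no inactive blue vertex on its boundary, and that the forcing vertex is active.

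The argument hinges on a single monotonicity observation about zero forcing, which I expect to be the only nonroutine point: once a vertex $u$ has performed a $\Z$-force $u\to w$, it has no white neighbor at any later step. Indeed, by CCR-$\Z$ the vertex $w$ is the unique white neighbor of $u$ at the moment of the force, so immediately after $w$ is colored blue, $u$ has no white neighbor; since vertices only pass from white to blue, $u$ retains no white neighbor thereafter. Equivalently, in the labeling above, every inactive blue vertex has no white neighbor.

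To finish, suppose the process performs $\Z$-force $u\to w$ at step $k+1$, and let $K$ be the component of $G-\B k$ containing $w$. Since $K$ is a component of $G-\B k$ we have $\partial_G(K)\subseteq\B k$, and each vertex of $\partial_G(K)$ has a white neighbor, namely any of its neighbors in $K$ (all vertices of $K$ being white at step $k$); by the monotonicity observation such a vertex must be active, so $\partial_G(K)$ contains no inactive blue vertex. This is exactly the requirement for choosing $K$ in part (1) of CCR-$\RL$. Finally $u$ is blue and adjacent to $w\in K$, hence $u\in\partial_G(K)$ and so $u$ is active, and $w$ is the unique white neighbor of $u$ in all of $G$, hence in particular the unique white neighbor of $u$ in $K$; this gives part (2). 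Thus the $\Z$-force $u\to w$ is a legal $\RL$-force with these labels, and since this holds for every force in the sequence, the zero forcing process is an $\RL$-forcing process.
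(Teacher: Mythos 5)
Your proposal is correct and follows essentially the same route as the paper's proof: the key point in both is that a blue vertex which has already forced has no remaining white neighbors (equivalently, every blue vertex adjacent to a white vertex is still active), which verifies both conditions of CCR-$\RL$ for each $\Z$-force. Your write-up merely makes the bookkeeping of active/inactive labels and the boundary check for the component $K$ more explicit than the paper does.
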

\bpf Initially all blue vertices are active so the first $\Z$-force is a valid $\RL$-force.  Let $\B k$ be the set of blue vertices after step $k$ in a zero forcing process on $G$.  Suppose $w$ is a white vertex, i.e., $w\not\in \B k$. Then every blue neighbor of $w$ is active, because in order to become inactive a vertex must perform an $\RL$-force and no neighbor of $w$ can $\Z$-force a vertex other than $w$ as long as $w$ is white.  Thus whatever $\Z$-force is performed next is a valid $\RL$-force.
\epf
\begin{defn}  Let $G$ be a graph.  For  a given rigid linkage forcing process on $G$ with $r$ steps: 
\bit
\item An {\em $\RL$-forcing chain}  is a path $(v_0,v_1,\dots,v_\ell)$ such that $v_0\in \B 0$, $v_\ell\in \Ba r$,  and $v_i\to v_{i+1}$ for all $i=0,\dots,\ell-1$.
\item  The {\em $\RL$-chain set} is the set of all  $\RL$-forcing chains (for the given forcing process).
\eit
\end{defn} 
Analogous definitions are used for $\Z$-forcing chains and the $\Z$-chain set using $\Z$-forcing. 
It is well known that $\Z$-forcing chains are  disjoint induced paths.
Next we make some basic observations.

\begin{obs}
\label{Zsharpchains}
Suppose an $\RL$-forcing process on $G$ starts with blue vertex set $\B 0$ and stops with blue vertex set $\B r$ and active blue vertex set $\Ba r$\!.  Then each $\RL$-forcing chain is an induced path in $G$ and the chain set is a 
$(\B 0,\Ba r)$-linkage.
\end{obs}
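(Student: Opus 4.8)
The plan is to track, throughout an $\RL$-forcing process, the digraph on the currently blue vertices whose arcs are the $\RL$-forces $u\to w$ performed so far, to show this digraph is always a disjoint union of directed paths with sources exactly $\B 0$, and then to verify that the paths obtained at the end of the process are induced in $G$.

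First I would record the elementary consequences of CCR-$\RL$: once a vertex is blue it stays blue, once it is inactive it stays inactive, and each step turns exactly one active vertex inactive while making the newly colored vertex active, so $|\Ba k|$ is independent of $k$ and equals $|\Ba 0|=|\B 0|$. It follows that each blue vertex is colored exactly once, hence lies in $\B 0$ or has a unique forcer, and that each vertex performs at most one $\RL$-force, since performing one makes it inactive and only active vertices force. Thus in the forcing digraph every vertex has in-degree and out-degree at most $1$, with in-degree $0$ exactly on $\B 0$; and since the step at which a vertex becomes blue is strictly smaller at the tail of an arc than at its head, the digraph is acyclic, hence a disjoint union of directed paths with sources exactly $\B 0$. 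At any step the sinks of this digraph are precisely the active vertices: an inactive vertex has already forced (out-degree $1$), whereas an active vertex that had forced would be inactive. Therefore the $\RL$-forcing chains are these $|\B 0|=|\Ba r|$ vertex-disjoint directed paths, each running from a vertex of $\B 0$ to a vertex of $\Ba r$; each is a path in $G$ because consecutive chain vertices are joined by an $\RL$-force and hence by an edge, and a single-vertex chain is a vertex $v\in\B 0$ that never forces, which contributes $v$ to both $\B 0$ and $\Ba r$ in accordance with the definition of an $(\alpha,\beta)$-linkage. So the chain set is a $(\B 0,\Ba r)$-linkage once the chains are shown to be induced.

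The substantive step is that each chain is an induced path. I would fix a chain $(v_0,\dots,v_\ell)$, suppose for contradiction that $v_iv_j\in E(G)$ with $j\ge i+2$, and examine the moment $v_i\to v_{i+1}$ is performed. Since $v_m\to v_{m+1}$ requires $v_m$ to be active, hence already blue, the forces along the chain occur in the order $v_0\to v_1,\ v_1\to v_2,\dots$, so at the step $k$ just before $v_i\to v_{i+1}$ none of $v_{i+1},\dots,v_j$ has been colored, i.e.\ all of them lie outside $\B k$. Let $K$ be the component of $G-\B k$ selected for this force. Then $v_{i+1}\in K$, and since $v_{i+1},v_{i+2},\dots,v_j$ is a path in $G-\B k$, all of $v_{i+1},\dots,v_j$ lie in $K$; in particular $v_j$ is a white neighbor of $v_i$ in $K$ different from $v_{i+1}$, contradicting the requirement of CCR-$\RL$ that $v_{i+1}$ be the only white neighbor of $v_i$ in $K$. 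Hence no chord exists and the chain is induced.

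I expect the only genuine obstacle to be this last step: unlike the classical argument that $\Z$-forcing chains are induced, an $\RL$-force only requires a unique white neighbor inside the chosen component $K$, not in all of $G$, so one must first observe that the still-white segment $v_{i+1},\dots,v_j$ of the chain forms a white path and therefore lies entirely within $K$. Everything else reduces to checking the invariants above.
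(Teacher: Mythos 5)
Your proof is correct. Note that the paper does not actually prove this statement: it is recorded as a basic observation, offered by analogy with the standard fact that $\Z$-forcing chains are disjoint induced paths, so there is no written argument to compare against; yours fills in exactly what is being taken for granted. Your bookkeeping of the forcing digraph (in- and out-degree at most one, sources exactly $\B 0$, sinks exactly the vertices that never forced, hence exactly $\Ba r$) is the routine part, and your handling of the induced-path step is the one place where the classical $\Z$-forcing argument genuinely needs adaptation: CCR-$\RL$ only guarantees that $w$ is the unique white neighbor of $u$ \emph{inside the chosen component $K$}, and you correctly close the gap by observing that the still-white tail $v_{i+1},\dots,v_j$ of the chain is a white path, hence lies entirely in $K$, so a chord $v_iv_j$ would give $v_i$ a second white neighbor in $K$, contradicting the rule. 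This is precisely the point a careful proof of the observation must make, and your argument is complete.
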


\begin{obs}
\label{obs2.6}
By Proposition $\ref{ZisRL}$, any $\Z$-chain set is an $\RL$-chain set.  
\end{obs}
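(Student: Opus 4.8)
The plan is to observe that the statement is essentially immediate from Proposition \ref{ZisRL} once the two notions of forcing chain are matched up, so the work is purely a bookkeeping check. First I would fix an arbitrary $\Z$-forcing process on $G$, say with initial blue set $\B 0$, terminal blue set $\B r$, and $\Z$-chain set $\mathcal{C}$. By Proposition \ref{ZisRL} this same sequence of forces is a valid $\RL$-forcing process on $G$ with initial blue set $\B 0$; in this interpretation the active set is determined step by step in the only possible way, namely $\Ba 0=\B 0$ and, whenever $u\to w$, the vertex $u$ becomes inactive and $w$ becomes active.

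Next I would check that the $\RL$-forcing chains of this process are precisely the paths in $\mathcal{C}$. The $\to$ relation is literally the same in both interpretations, so a path $(v_0,\dots,v_\ell)$ with $v_0\in\B 0$ and $v_i\to v_{i+1}$ for all $i$ is a $\Z$-forcing chain exactly when $v_\ell$ performs no force, and it is an $\RL$-forcing chain exactly when $v_\ell\in\Ba r$. But in the $\RL$ interpretation a blue vertex is inactive if and only if it has performed a force, so ``$v_\ell$ performs no force'' and ``$v_\ell\in\Ba r$'' describe the same vertices; moreover each vertex of $\Ba r$ is the terminus of exactly one such path. Hence $\mathcal{C}$ coincides with the set of $\RL$-forcing chains of this process, which by Observation \ref{Zsharpchains} is a $(\B 0,\Ba r)$-linkage, and in particular $\mathcal{C}$ is an $\RL$-chain set.

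The only point requiring any care --- and it is minor --- is confirming that the active-set data in the $\RL$ interpretation is forced rather than chosen: this is exactly the content of the observation that each $\Z$-force toggles precisely the forcing vertex to inactive and the forced vertex to active, with no other changes, so there is a unique consistent assignment and the argument above is unambiguous. I do not anticipate any genuine obstacle beyond this routine verification.
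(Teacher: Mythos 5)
Your argument is correct and matches the paper's reasoning: the paper treats this as an immediate consequence of Proposition \ref{ZisRL}, and your write-up simply makes explicit the routine identification of $\Z$-forcing chains with $\RL$-forcing chains (a vertex is inactive exactly when it has performed a force). No gaps; this is the same approach, just spelled out in more detail than the paper bothers to.
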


The next example shows that  making certain choices in an $\RL$-forcing process can result in $\B r\ne V(G)$ even if   $\B 0$ is a zero forcing set and the rigid linkage forcing process is run until no more
$\RL$-forces are possible. 
\begin{ex}\label{ex:paw}  Consider the set $\B 0=\{2,3\}  $ of initial blue vertices in the paw graph shown in Figure \ref{fig:paw}.  Then $\B 0$ is a zero forcing set (with $\Z$-forces $3\to 4$ and $2\to 1$).  For rigid linkage forcing, we have the option to choose $2\to 1$ as the first $\RL$-force, and if we do so, then no more $\RL$-forces are possible.  

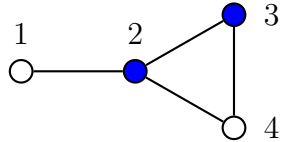
\begin{figure}[h!]
\begin{center}
\begin{tikzpicture}
\node[label={$1$}] (1) at (-1.5,0) {};
\node[fill=blue,label={$2$}] (2) at (0,0) {};
\node[fill=blue,label={right:$3$}] (3) at (30:1.5) {};
\node[label={right:$4$}] (4) at (-30:1.5) {};
\draw (1) -- (2) -- (3) -- (4) -- (2);
\end{tikzpicture}
\caption{The paw graph\label{fig:paw}}
\end{center}\vspace{-15pt}
\end{figure}
\end{ex}

By Observation \ref{Zsharpchains}, any $\RL$-chain set is a linkage.  We will show that being an $\RL$-chain set is equivalent to being a rigid linkage (hence the name `rigid linkage forcing').
The set of neighbors of a vertex $u$ in a graph $G$ is denoted by $N_G(u)$.

\begin{lem}
\label{forcelem}
Suppose $\alpha=\{\alpha_i\}_{i=1}^t$, $\beta=\{\beta_i\}_{i=1}^t$, $\alpha_i\neq \beta_i$ for all $i$, and every component of $G-\alpha$ intersects  $\beta$.  If there exists an $(\alpha,\beta)$-rigid linkage, then there is a vertex $u\in\alpha$ such that $|N_G(u)\setminus\alpha|=1$.
\end{lem}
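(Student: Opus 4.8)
The plan is to argue by contradiction: suppose an $(\alpha,\beta)$-rigid linkage $\p=\{p_i\}_{i=1}^t$ exists, but every vertex $u\in\alpha$ has at least two neighbors outside $\alpha$ (i.e.\ $|N_G(u)\setminus\alpha|\ge 2$). I would use this hypothesis to build a \emph{different} $(\alpha,\beta)$-linkage, contradicting rigidity. The natural way to produce an alternate linkage is to ``reroute'' one of the paths: take the path $p_i$ starting at $\alpha_i$, and note that $p_i$ leaves $\alpha_i$ along some neighbor; since $\alpha_i$ has a second neighbor $x\notin\alpha$, I want to start a new path at $\alpha_i$ going to $x$ instead, and then stitch things back together so the endpoint sets $\alpha$ and $\beta$ are preserved.

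First I would set up the combinatorial skeleton. Orient each path $p_i$ from $\alpha_i$ to $\beta_i$; this gives each non-isolated vertex of $V(\p)$ an in-edge and/or out-edge. Since the $p_i$ are vertex-disjoint and $\alpha_i\ne\beta_i$, each $\alpha_i$ has exactly one out-neighbor along $\p$, call it $s(\alpha_i)$. Now fix $\alpha_1$ (relabeling if necessary) and pick $x\in N_G(\alpha_1)\setminus\alpha$ with $x\ne s(\alpha_1)$; such an $x$ exists by the negated conclusion. The key structural fact I will need is that every component of $G-\alpha$ meets $\beta$, which means $x$ is connected within $G-\alpha$ to some terminal $\beta_j$; equivalently, following forcing-chain/linkage intuition, from $x$ one can reach $\beta$ without returning to $\alpha$. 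I would then trace a walk from $x$ through $G-\alpha$ to the first vertex of $\alpha\cup\beta$ it hits. If that walk hits a $\beta$-vertex that lies on a ``downstream'' portion it can safely claim, we splice; if it hits an interior vertex of some $p_j$, we cut $p_j$ there and reroute, which frees up the tail of $p_j$ (ending at $\beta_j$) to be picked up by whichever path was disrupted. Careful bookkeeping of which terminal each new path ends at shows the multiset of endpoint pairs $\{\{\alpha_i,\beta_i\}\}$ is unchanged as an $(\alpha,\beta)$ pairing, while the linkage itself differs from $\p$ (it uses the edge $\alpha_1 x$, which $\p$ does not), contradicting $(\alpha,\beta)$-rigidity.

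The main obstacle is controlling the rerouting so that it genuinely terminates in a valid $(\alpha,\beta)$-linkage rather than an arbitrary subgraph: a naive reroute can create a path that revisits $\alpha$ in its interior, or can leave one terminal of $\beta$ uncovered while doubly covering another. The clean way to handle this is an exchange/alternating-path argument — consider the symmetric difference of $\p$ with the candidate rerouted edge set and show it decomposes into the desired paths plus possibly cycles, then discard cycles — or, alternatively, to induct on $\sum_i |E(p_i)|$, using Proposition \ref{rigidsublinkage} to pass to a rigid sublinkage on shorter paths after the first reroute step. I expect the inductive route to be cleanest: truncate the paths toward $\alpha$ by one vertex where possible, verify the hypotheses of Lemma \ref{forcelem} still hold for the truncated data (here the condition ``every component of $G-\alpha$ intersects $\beta$'' must be re-established, which is the delicate point), apply the inductive hypothesis, and lift the resulting degree-one vertex back. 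I would present the direct exchange argument as the primary proof and remark that it is exactly the mechanism that makes the first $\RL$-force available, since $|N_G(u)\setminus\alpha|=1$ is precisely the condition in clause \eqref{c:CCR-RL-2} of CCR-$\RL$.
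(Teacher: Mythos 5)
Your overall strategy (negate the conclusion and contradict rigidity by exhibiting a second $(\alpha,\beta)$-linkage) is the right one, and you correctly locate the hard point: a single reroute from $\alpha_1$ dislodges a terminal $\beta_j$ and orphans $\beta_1$, and the resulting cascade must be closed up into an honest linkage. But your proposal stops exactly there. Neither of your two fallback mechanisms resolves it. The ``symmetric difference'' idea is not well defined for systems of vertex-disjoint paths: you never specify the rerouted edge set, and a symmetric difference of path systems need not decompose into vertex-disjoint paths with endpoint sets $\alpha$ and $\beta$ (alternating-structure arguments of this kind work for matchings or edge-disjoint/flow settings, not directly here). The inductive route via Proposition \ref{rigidsublinkage} has the problem you yourself flag --- re-establishing ``every component of $G-\alpha'$ meets $\beta'$'' --- plus a worse one: the inductive hypothesis would give a vertex of $\alpha'$ with a unique neighbor outside $\alpha'$ \emph{in the truncated graph}, and there is no evident way to ``lift'' that back to a vertex of $\alpha$ with a unique neighbor outside $\alpha$ in $G$ (also, truncation can make $\alpha_i=\beta_i$, violating the hypotheses). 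So as written the proposal has a genuine gap at its central step.

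The missing idea in the paper's proof is a global closure device rather than a one-path reroute. Using the hypothesis $|N_G(\alpha_i)\setminus\alpha|\ge 2$ at \emph{every} $i$ (not just at one chosen $\alpha_1$), one builds an auxiliary digraph $\Gamma$ on $\{v_1,\dots,v_t\}$ with an arc $(v_i,v_j)$ whenever there is a path $p_i'\neq p_i$ from $\alpha_i$ to $\beta_j$ meeting $V(\p)$ (beyond $\alpha_i$) only inside $V(p_j)$; the second neighbor of $\alpha_i$, together with the hypothesis that every component of $G-\alpha$ meets $\beta$, shows every $v_i$ has out-degree at least one. Hence $\Gamma$ contains a directed cycle, and choosing a \emph{shortest} directed cycle forces the corresponding reroute paths to be pairwise vertex-disjoint (any intersection would yield a chord of the cycle). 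Replacing the paths indexed by the cycle with these reroutes closes your cascade consistently: each claimed $\beta_{i_{k+1}}$ is exactly the terminal freed by discarding $p_{i_{k+1}}$, so the result is a new $(\alpha,\beta)$-linkage, contradicting rigidity. Your write-up would become a proof once you replace ``careful bookkeeping'' by this cycle argument (or an equivalent mechanism guaranteeing the reroutes form a closed, disjoint family); note also that this is where the hypothesis at all vertices of $\alpha$, not just one, is genuinely used.
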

\begin{proof}
Let $\p=\{p_i\}_{i=1}^t$ be an $(\alpha,\beta)$-linkage.  Since $\alpha_i\neq \beta_i$ for all $i$, every vertex in $\alpha$ has a neighbor not in $\alpha$, namely, the next vertex after $\alpha_i$ on $p_i$.  We  show that if $|N_G(\alpha_i)\setminus\alpha|\geq 2$ for all $\alpha_i\in\alpha$, then $\p$ is not an $(\alpha,\beta)$-rigid linkage.  

Suppose $|N_G(\alpha_i)\setminus\alpha|\geq 2$ for all $\alpha_i\in\alpha$.  Construct a digraph $\Gamma$ on $t$ vertices $\{v_i\}_{i=1}^t$ with arcs $(v_i,v_j)$ whenever there is a path $p_i'\neq p_i$ in $G$ going from $\alpha_i$ to $\beta_j$ and $(V(p_i')\setminus \{\alpha_i\})\cap V(\p)\subseteq V(p_j)$.  Here $i=j$ is possible.  

  Since $|N_G(\alpha_i)\setminus\alpha|\geq 2$, each $\alpha_i\in\alpha$ has a neighbor $u\notin\alpha$ that is not the next vertex after $\alpha_i$ on $p_i$.  If $u\in p_j$, then $(v_i,v_j)$ is an arc in $\Gamma$.  If $u\notin V(\p)$, then there is a path $p'$ from $\alpha_i$ that passes through $u$  and ends at some vertex in $\beta$ without using any vertex in $\alpha$ except for $\alpha_i$, since the component of $G-\alpha$ containing $u$ intersects  $\beta$.  Proceed along the path $p'$ from $u$ until the first  vertex $x$  in $V(\p)$; suppose $x\in p_j$.  Then we have found a path $p_i'$  from $\alpha_i$ to $\beta_j$ with $(V(p_i')\setminus \{\alpha_i\})\cap V(\p)\subseteq V(p_j)$.  This means every vertex of $\Gamma$ has out-degree at least one.

Therefore, $\Gamma$ contains a directed cycle.  Pick a shortest directed cycle $C$ in $\Gamma$.  Say $C$ has vertices $\{v_{i_k}\}_{k=1}^\ell$ and arcs $(v_{i_k},v_{i_{k+1}})$, where the addition is taken modulo $\ell$.  Let $p_{i_k}'$ be the corresponding path connecting $\alpha_{i_{k}}$ to $\beta_{i_{k+1}}$.  If for some distinct $a,b\in\{1,\ldots,\ell\}$ the paths $p_{i_a}'$ and $p_{i_b}'$ intersect, then both $(v_{i_a},v_{i_{b+1}})$ and $(v_{i_b},v_{i_{a+1}})$ are arcs in $\Gamma$, contradicting the fact that $C$ is a shortest directed cycle.  This means the paths $p'_{i_k}$ are vertex-disjoint for $k=1,\dots,\ell$.  Thus the set of paths $\{p'_{i_k}\}_{k=1}^\ell$ together with the paths in $\p\setminus\{p_{i_k}\}_{k=1}^\ell$ forms a linkage different from $\p$.
\end{proof}

\begin{defn}  Suppose  $\alpha,\beta\subseteq V(G)$ with $|\alpha|=|\beta|$.  Consider starting an $\RL$-forcing process with $\Ba 0=\alpha$ with the goal of ending with $\Ba r=\beta$.  After step $k$, when $\B k$ is the set of blue vertices,  we call every vertex in any component of $G-\B k$ that intersects  $\beta$ an {\em effective} vertex. 
\end{defn}
\begin{thm}
\label{Zsharplinkage}
Let $G$ be a graph and $\p$ a linkage in $G$.  Then $\p$ is a rigid linkage if and only if $\p$ is an $\RL$-chain set under some $\RL$-forcing process.
\end{thm}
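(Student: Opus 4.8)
The plan is to prove the two implications of Theorem~\ref{Zsharplinkage} separately, inducting on a notion of ``size'' in each case; I expect the forward direction (rigid $\Rightarrow$ $\RL$-chain set) to be the harder one, since it must \emph{build} a forcing process, whereas the reverse direction only has to exclude a competing linkage.

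\medskip

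For the reverse direction, suppose $\p$ is the $\RL$-chain set of an $\RL$-forcing process with initial blue set $\B 0=\alpha$ and final active set $\Ba r=\beta$; by Observation~\ref{Zsharpchains}, $\p$ is an $(\alpha,\beta)$-linkage, and I would show by induction on the number of forces $r$ that it is $(\alpha,\beta)$-rigid. If $r=0$, a short counting argument (a path on two or more vertices would force more than $|\alpha|$ vertices into $\alpha$) shows the trivial linkage on $\alpha$ is the only $(\alpha,\alpha)$-linkage. If $r\ge1$, let $u\to w$ be the first force, performed into a component $K$ of $G-\alpha$; note $u\notin\beta$ because $u$ goes inactive at step~$1$ and never becomes active again. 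The crux is to show that in \emph{every} $(\alpha,\beta)$-linkage $\p^*$ the path starting at $u$ has second vertex $w$: otherwise that path leaves $u$ into a component $K'\ne K$ of $G-\alpha$, but then $u$ is an inactive blue vertex of $\partial_G(K')$ from step~$1$ on, so rule~(1) forbids ever $\RL$-forcing into $K'$ and hence $K'\cap V(\p)=\emptyset$; yet the $\p^*$-path from $u$ stays inside $K'$ and ends in $\beta\subseteq V(\p)$, a contradiction. Granting this, deleting $u$ turns $\p^*$ into an $(\hat\alpha,\beta)$-linkage of $G-u$ with $\hat\alpha=(\alpha\setminus\{u\})\cup\{w\}$, and the forces $2,\dots,r$ form an $\RL$-process on $G-u$ with initial set $\hat\alpha$, final active set $\beta$, and chain set $\p$ with $u$ deleted; the inductive hypothesis makes this smaller chain set $(\hat\alpha,\beta)$-rigid, so $\p^*$ and $\p$ agree after deleting $u$, whence $\p^*=\p$.

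\medskip

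For the forward direction, suppose $\p$ is $(\alpha,\beta)$-rigid; I would prove by induction on $|V(\p)\setminus\alpha|$ that there is an $\RL$-process with $\B 0=\alpha$ whose chain set is exactly $\p$ (so $\Ba r=\beta$). The base case is the all-trivial linkage. For the inductive step, I would first reduce to a setting where Lemma~\ref{forcelem} applies: deleting every component of $G-\alpha$ that misses $\beta$ keeps $\p$ an $(\alpha,\beta)$-rigid linkage (Observation~\ref{obs:sub}); such a component holds no vertex of any path, is never $\RL$-forced into, and, consisting of vertices that stay white, never blocks an $\RL$-force, so a process built in the smaller graph lifts verbatim to $G$. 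Likewise, deleting the vertices of the one-vertex paths reduces to the sub-linkage of the nontrivial paths, which is $(\tilde\alpha,\tilde\beta)$-rigid by Proposition~\ref{rigidsublinkage}, and the deleted vertices may simply be kept blue and active throughout the $G$-process. After these reductions $\alpha_i\ne\beta_i$ for all $i$ and every component of $G-\alpha$ meets $\beta$, so Lemma~\ref{forcelem} yields $u\in\alpha$ with exactly one neighbor $w$ outside $\alpha$; since $\p$ is a linkage, $w$ is the second vertex of the path $p_i$ ending at $u$. Then $u\to w$ is a valid first $\RL$-force (rule~(1) is automatic at step~$0$ and $w$ is the only white neighbor of $u$), and Proposition~\ref{rigidsublinkage} shows $\hat\p$, obtained from $\p$ by trimming $u$ off $p_i$, is $((\alpha\setminus\{u\})\cup\{w\},\beta)$-rigid in $G-u$; since every neighbor of $u$ is now blue, $u$ blocks no later force, so the $\RL$-process supplied by the inductive hypothesis on $G-u$, with $u\to w$ prepended, is the desired process on $G$, with chain set $\p$.

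\medskip

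The main obstacle I anticipate is careful bookkeeping rather than one hard idea: verifying that an $\RL$-process on a graph obtained by deleting a vertex or an irrelevant component lifts to a genuine $\RL$-process on the original graph---the components of $G-\B k$, their boundaries $\partial_G(\cdot)$, and the active/inactive labels all have to be checked to be undisturbed---and, in the reverse direction, pinning down the ``sacrificed component'' argument that ties every competing $(\alpha,\beta)$-linkage to the first forcing chain. A further subtlety is that the pattern pairing is not determined by $\alpha$ and $\beta$, so the inductive statements must be phrased in terms of $(\alpha,\beta)$-rigidity, i.e., uniqueness among \emph{all} $(\alpha,\beta)$-linkages, not among those with a prescribed pairing.
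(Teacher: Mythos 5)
Your proposal is correct and is essentially the paper's argument: the forward direction is assembled from the same two ingredients (Lemma~\ref{forcelem} to produce the next force and Proposition~\ref{rigidsublinkage} to keep the trimmed linkage rigid), and the reverse direction is the same induction on the number of forces. The differences are organizational rather than substantive: you peel off the \emph{first} force and recurse on $G-u$ after deleting the components of $G-\alpha$ missing $\beta$ and the singleton-path vertices (and your lifting checks do go through, since those components attach only to the always-blue set $\alpha$, a spent vertex has all neighbors blue by Lemma~\ref{forcelem} and so never lies on a boundary, and singleton vertices stay active), whereas the paper peels off the \emph{last} force within the fixed graph $G$ in the reverse direction and, in the forward direction, stays in $G$ and maintains the invariant that each force's target is its source's unique ``effective'' white neighbor---your graph surgery and the paper's effective-vertex bookkeeping serve the same purpose of certifying rule~(1) of CCR-$\RL$.
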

\begin{proof}

We first prove that any $\RL$-chain set is a rigid linkage by  induction on the number of forces.  Suppose $\p=\{p_i\}_{i=1}^t$ is an $\RL$-chain set,  $\B 0 =\alpha=\{\alpha_i\}_{i=1}^t$, $\Ba r=\beta=\{\beta_i\}_{i=1}^t$, and the endpoints of $p_i$ are $\alpha_i$ and $\beta_i$.  If no forces happen, then $\p$ is composed of paths on one vertex, so it is a rigid linkage.  Otherwise we may assume without loss of generality that   $\beta'_t\rightarrow \beta_t$ is the last force performed.  

Define   $p_t'$ to be the path obtained from $p_t$ by removing   vertex $\beta_t$,  $\p'=\{p_i\}_{i=1}^{t-1}\cup\{p_t'\}$, and $\beta'=\beta\setminus \{\beta_t\}\cup\{\beta_t'\}$.  By the induction hypothesis, $\p'$ is the rigid linkage between $\alpha$ and $\beta'$ in $G$.  Let $K$ be the component of $G-V(\p')$ containing $\beta_t$.  After  step $r-1$, when $\beta'_t\rightarrow \beta_t$ is the next force, $\beta_t$ is the only white neighbor of $\beta'_t$ in $K$ and $\partial_G(K)$ does not contain any inactive vertices since $\beta'_t\to\beta_t$ using CCR-$\RL$.  This means any path $\tilde p_t$ going from $\beta_t$ to some vertex in $\alpha$ must pass through some vertex in $\beta'$.  If such $\tilde p_t$ is in a linkage between $\alpha$ and $\beta$, then $\tilde p_t$ cannot pass through two vertices in $\beta$, so the only choice is that $\tilde p_t$ contains $\beta_t'$ but not any vertex in $\beta\setminus\{\beta_t\}$.  Moreover, $\beta_t$ is the only white neighbor of $\beta_t'$ in $K$, so $\tilde p_t$ must start from $\beta_t$ and  go to $\beta_t'$ immediately.  By the induction hypothesis, the only linkage between $\alpha$ and $\beta\cup\{\beta_t'\}\setminus \{\beta_t\}$ is $\p'$, so $\p$ is the rigid linkage between $\alpha$ and $\beta$.

For the converse, suppose $\alpha=\{\alpha_i\}_{i=1}^t$, $\beta=\{\beta_i\}_{i=1}^t$, and $\p=\{p_i\}_{i=1}^t$ is an $(\alpha,\beta)$-rigid linkage where the endpoints of $p_i$ are $\alpha_i$ and $\beta_i$. 
To show that we can produce an $\RL$-forcing process that has the paths in $\p$ as its $\RL$-forcing chains (starting with $\alpha$ and ending with $\beta$), 
we prove a stronger claim:  After any step of the $\RL$-forcing process, there is an active blue vertex $u$ that can perform an $\RL$-force $u\rightarrow w$ along some path in $\p$, and $w$ is the only white neighbor of $u$ among all effective white vertices, and we use only forces of this type.

After  step $k$ is completed, we  
define $\alpha'$ to be the set of active blue vertices not in $\beta$, and $\beta'$ to be the set of white vertices in $\beta$.  Consider the  subgraph $G'$ induced by $\alpha'$ and the effective white vertices. 
By Proposition \ref{rigidsublinkage}, there is an $(\alpha',\beta')$-rigid linkage in $G'$, 
so Lemma \ref{forcelem} guarantees the existence of a vertex $u\in\alpha'$  and a vertex $w$ such that  $N_{G'}(u)\setminus \alpha' =\{w\}$.
If $p_i$ is the path containing  $u$, then the next vertex after $u$ in path $p_i$ is  in $N_{G'}(u)\setminus \alpha'$, so $w$ must be the next vertex after $u$ on path $p_i$. Since $G'$ contains all effective white vertices, $w$ is the only white neighbor of $u$ among all effective white vertices.  Let $K$ be the component of $G-{\B k}$ that contains $w$.  Then $\partial_G(K)$ does not contain any inactive blue vertex $u'$, for otherwise just before $u'$  made its $\RL$-force $u'\rightarrow v'$, $u'$ had at least two effective white neighbors, namely $v'$ and a vertex in $K$, 
contrary to the procedure in the claim.  Continuing this process, we are able to build a family of $\RL$-forcing chains on $\p$.
\end{proof}

\begin{rem}
\label{ZsharpZ}
Let $\p$ be an $(\alpha,\beta)$-rigid linkage.
The proof of Theorem \ref{Zsharplinkage} actually proves  that $\p$ can be obtained by starting  with  $\B 0=\alpha$ 
 and using $RL$-forces of the type in the  proof of the theorem. That is, 
if  it is possible to perform an $\RL$-force $u\to w$ after step $k$, then $u\to w$ is a valid $\Z$-force within  the subgraph  of $G$ induced by the union of $\B k$ and the set of effective white vertices.
\end{rem}


\begin{cor}\label{RLchainZ}
Let $G$ be a graph and $\p$ a spanning linkage in $G$.  Then the following are equivalent:
\ben[{\rm 1.}]
\item \label{c:rl} $\p$ is a rigid linkage.
\item \label{c:Zrl} $\p$ is an $\RL$-chain set.
\item \label{c:Z} $\p$ is a $\Z$-chain set.
\een
Therefore, $\Zsharp(G)=\Z(G)$ for any graph $G$, and every $\Z$-chain set or   $\RL$-chain set is a vital linkage of the subgraph of $G$ induced by  the set  of blue vertices at the end of the forcing process.
\end{cor}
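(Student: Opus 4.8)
The plan is to establish the three-way equivalence first and then read off the two consequences. The equivalence of condition~\ref{c:rl} and condition~\ref{c:Zrl} needs no spanning hypothesis at all: it is precisely Theorem~\ref{Zsharplinkage}. Likewise, condition~\ref{c:Z} implies condition~\ref{c:Zrl} by Observation~\ref{obs2.6}. So the only implication that actually uses that $\p$ is spanning is condition~\ref{c:Zrl} $\Rightarrow$ condition~\ref{c:Z}, and that is where I would put the work.

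For condition~\ref{c:Zrl} $\Rightarrow$ condition~\ref{c:Z}, suppose $\p$ is an $\RL$-chain set; by what is already proved, $\p$ is a rigid linkage, say $(\alpha,\beta)$-rigid. By Remark~\ref{ZsharpZ} there is an $\RL$-forcing process with $\B 0=\alpha$, $\Ba r=\beta$, and chain set $\p$, in which every force $u\to w$ performed after step $k$ is a valid $\Z$-force in the subgraph induced by $\B k$ together with the effective white vertices after step $k$. I would then show that, because $\p$ is spanning, \emph{every} white vertex is effective at every step of this process. The key point is that in such a process the blue part of each forcing chain is always an initial segment of the corresponding path $p_i\in\p$ (forces proceed in path order, since a vertex becomes active only when its predecessor on the chain forces it). Hence if a vertex $x\in V(p_i)$ is white after step $k$, then nothing after $x$ on $p_i$ is blue; in particular $\beta_i$ is white and the whole subpath of $p_i$ from $x$ to $\beta_i$ lies in $G-\B k$, so the component of $G-\B k$ containing $x$ meets $\beta$ and $x$ is effective. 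Therefore the subgraph in Remark~\ref{ZsharpZ} is all of $G$, each $\RL$-force is a genuine $\Z$-force in $G$, the process is a zero forcing process, and its $\Z$-chain set is $\p$.

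From the equivalence I would deduce $\Zsharp(G)=\Z(G)$: the inequality $\Zsharp(G)\le\Z(G)$ is immediate from Proposition~\ref{ZisRL}, and for the reverse I would take an $\RL$-forcing set $\B 0$ with $|\B 0|=\Zsharp(G)$ that colors all of $G$ blue, note that its $\RL$-chain set is a spanning linkage by Observation~\ref{Zsharpchains}, hence a $\Z$-chain set by the equivalence, so $\B 0$ is a zero forcing set and $\Z(G)\le\Zsharp(G)$. For the vital-linkage statement, let $\p$ be a $\Z$-chain set or $\RL$-chain set of a forcing process with final blue vertex set $\B r$, and set $H=G[\B r]$. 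A $\Z$-chain set is an $\RL$-chain set by Observation~\ref{obs2.6}, so $\p$ is rigid by Theorem~\ref{Zsharplinkage}; since $H$ contains $\p$ as a subgraph, $\p$ is rigid, hence unique, in $H$ by Observation~\ref{obs:sub}; and since the chains of $\p$ partition $\B r$, $\p$ is spanning in $H$. A unique spanning linkage is vital, so $\p$ is a vital linkage of $H$.

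I expect the main obstacle to be the claim that the spanning hypothesis forces every white vertex to remain effective throughout the process, since that is exactly what upgrades each CCR-$\RL$ force to a CCR-$\Z$ force in $G$; the rest is assembling Theorem~\ref{Zsharplinkage}, Remark~\ref{ZsharpZ}, Proposition~\ref{ZisRL}, and Observations~\ref{Zsharpchains}, \ref{obs2.6}, and~\ref{obs:sub}.
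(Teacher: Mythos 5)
Your proof is correct and takes essentially the same route as the paper: (\ref{c:rl})$\Leftrightarrow$(\ref{c:Zrl}) by Theorem \ref{Zsharplinkage}, (\ref{c:Z})$\Rightarrow$(\ref{c:Zrl}) by Observation \ref{obs2.6}, and (\ref{c:Zrl})$\Rightarrow$(\ref{c:Z}) via Remark \ref{ZsharpZ} together with the fact that the spanning hypothesis keeps every white vertex effective at every step. Your added details---the initial-segment argument showing each white vertex lies in a component of $G-\B k$ meeting $\beta$, and the explicit deductions of $\Zsharp(G)=\Z(G)$ and of vitality via Observation \ref{obs:sub}---simply spell out steps the paper leaves implicit.
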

\begin{proof}
The equivalence of  (\ref{c:rl})  and  (\ref{c:Zrl})  follows from Theorem \ref{Zsharplinkage}.  By Observation \ref{obs2.6}, any $\Z$-chain set is an $\RL$-chain set. 
By Remark \ref{ZsharpZ},  if $\p$ is a rigid linkage then any $\RL$-force $u\rightarrow v$ can be taken to have the property that $u$ is not adjacent to any effective white vertex except for $w$.  Since $V(\p)=V(G)$ ensures that at every step every white vertex is an effective white vertex, $u\rightarrow w$ is a valid $\Z$-force, and $\p$ is a $Z$-chain set. 
\end{proof}

If $G$ has a spanning rigid linkage $\p$ of order $t$, then    $\Z(G)\le t$ because $\p$ is  a $\Z$-chain set.  Since $\tw(G)\le \Z(G)$ \cite{param}, we obtain the next corollary.  

\begin{cor}\label{rigidlinktw}
Let $G$ be a graph and let $\p$ be an order $t$  spanning rigid linkage in $G$.  Then $\tw(G)\le t$.
\end{cor}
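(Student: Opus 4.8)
The plan is to obtain the bound by composing two facts already in hand: that a spanning rigid linkage is realized by a genuine zero forcing process, and the known inequality $\tw(G)\le\Z(G)$. So this will be a short argument, essentially a bookkeeping observation followed by a citation.

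First I would apply Corollary \ref{RLchainZ}: since $\p$ is a spanning rigid linkage, that corollary says $\p$ is a $\Z$-chain set, i.e.\ there is a zero forcing process on $G$ whose set of forcing chains is exactly $\p$. In any zero forcing process the forcing chains are vertex-disjoint paths, each starting at a distinct vertex of the initial blue set $\B 0$, and conversely every vertex of $\B 0$ is the start of a (possibly trivial, length-zero) chain; hence the number of chains equals $|\B 0|$. Since $\p$ has order $t$, this gives $|\B 0|=t$. Because $\p$ is spanning, $V(\p)=V(G)$, so the derived set of this process is all of $V(G)$; therefore $\B 0$ is a zero forcing set of cardinality $t$, and $\Z(G)\le t$. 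Finally I would invoke $\tw(G)\le\Z(G)$ from \cite{param} to conclude $\tw(G)\le t$.

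The main obstacle here is essentially nonexistent, since all the genuine content was already packed into Theorem \ref{Zsharplinkage} and Corollary \ref{RLchainZ}; the only points that need a moment of care are the two elementary facts about zero forcing chains used above (that their number equals $|\B 0|$, and that a spanning chain set forces the derived set to be all of $V(G)$), and both follow immediately from the definitions of $\Z$-forcing chain and derived set.
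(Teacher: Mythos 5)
Your argument is correct and matches the paper's: the paper likewise deduces $\Z(G)\le t$ from the fact that a spanning rigid linkage of order $t$ is a $\Z$-chain set (Corollary \ref{RLchainZ}) and then cites $\tw(G)\le\Z(G)$ from \cite{param}. You merely spell out the elementary bookkeeping (number of chains equals $|\B 0|$, spanning chain set implies the derived set is $V(G)$) that the paper leaves implicit.
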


$A$ graph $G$ is a \emph{graph  of two parallel paths} \cite{Row12} provided
\begin{itemize}
\item $G$ is not a path, 
\item   there exist two disjoint induced paths of $G$ that together span $G$ and
\item   $G$ can be drawn in the plane with all edges drawn as line segments such that these two paths are parallel  and edges  between the two paths do not cross. 
\end{itemize}

\begin{cor}\label{rigid2}
Let $G$ be a graph that is not a path.  The following are equivalent.

\ben[{\rm 1.}]
\item\label{rigid2:R2} $G$ has a spanning rigid linkage of order $2$.
\item\label{rigid2:Z2} $\Z(G)=2$.
\item\label{rigid2:2PP} $G$ is a graph of two parallel paths.
\een 
\end{cor}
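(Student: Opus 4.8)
The plan is to establish the cycle of implications $(\ref{rigid2:R2}) \Rightarrow (\ref{rigid2:Z2}) \Rightarrow (\ref{rigid2:2PP}) \Rightarrow (\ref{rigid2:R2})$. The first implication is already essentially done: if $G$ has a spanning rigid linkage of order $2$, then by Corollary \ref{RLchainZ} this linkage is a $\Z$-chain set, so $\Z(G) \le 2$; and since $G$ is not a path we cannot have $\Z(G) \le 1$ (a graph with $\Z(G)=1$ is a path), hence $\Z(G)=2$.

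For $(\ref{rigid2:Z2}) \Rightarrow (\ref{rigid2:2PP})$, I would take a zero forcing set $B=\{u_1,u_2\}$ of size $2$ and run a zero forcing process to completion. Since $\Z(G)=2$, this colors all of $G$ blue, and the two $\Z$-forcing chains $P_1, P_2$ are disjoint induced paths that together span $V(G)$; since $G$ is not a path, both chains are genuinely present and $G$ is not a single path even after adding the cross edges. The work is to produce the planar drawing with $P_1$ and $P_2$ drawn as parallel straight segments and no crossings among the edges between them. The key structural fact to extract from the forcing process is that cross edges are "laminar" along the two chains: when a vertex $u$ on chain $P_1$ forces the next vertex $w$ on $P_1$, every neighbor of $u$ other than $w$ must already be blue, which forces a strong ordering constraint on which vertices of $P_2$ can be adjacent to which vertices of $P_1$. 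I expect this non-crossing / interval-nesting property to be the main obstacle: one must argue carefully, using the order in which forces occur on the two chains, that two cross edges $x_i y_j$ and $x_k y_\ell$ (with $x$'s on $P_1$ in path order and $y$'s on $P_2$ in path order) cannot "interleave," i.e.\ one cannot have $i<k$ while $j>\ell$ in a way that forces a crossing. Handling the possibility that the forcing does not proceed in lockstep along the two chains — and dealing with the endpoints of the chains, where the initial blue vertices $u_1,u_2$ sit — is where the care is needed; it may be cleanest to induct on $|V(G)|$, peeling off the last-forced vertex and invoking the drawing for the smaller graph.

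For $(\ref{rigid2:2PP}) \Rightarrow (\ref{rigid2:R2})$, suppose $G$ is drawn as two parallel induced paths $P_1 = (a_1, \dots, a_p)$ and $P_2 = (b_1, \dots, b_q)$ with non-crossing cross edges, oriented so that $a_1$ and $b_1$ are the "left" ends. Let $\alpha = \{a_1, b_1\}$ and $\beta = \{a_p, b_q\}$, and let $\p = \{P_1, P_2\}$; this is clearly an $(\alpha,\beta)$-spanning linkage. I claim it is $(\alpha,\beta)$-rigid. Any other $(\alpha,\beta)$-linkage $\p'=\{P_1', P_2'\}$ has one path from $a_1$ and one path from $b_1$, spanning all of $V(G)$. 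Consider the first place (moving left to right) where $P_1'$ leaves $P_1$ via a cross edge $a_i b_j$; then to return and still cover all vertices and end at one of $a_p, b_q$ without crossing, one is forced — by the non-crossing condition, which prevents any cross edge from "jumping back over" another — to commit to a contradiction, either leaving some vertex uncovered or requiring two vertex-disjoint paths to cross. The cleanest route is again induction on $|V(G)|$: if $a_1$ has no cross-edge neighbor, then $a_1$ has a unique neighbor $a_2$, so any $(\alpha,\beta)$-linkage must use the edge $a_1a_2$, and we pass to $G - a_1$ with $\alpha' = \{a_2, b_1\}$; symmetrically for $b_1$. The base case and the case where both $a_1$ and $b_1$ have cross edges require checking that the leftmost cross edge is forced, which follows from planarity. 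I anticipate this direction to be more routine than $(\ref{rigid2:Z2}) \Rightarrow (\ref{rigid2:2PP})$, the real content of the corollary being the equivalence with the geometric "two parallel paths" condition.
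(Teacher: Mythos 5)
Your implication (\ref{rigid2:R2}) $\Rightarrow$ (\ref{rigid2:Z2}) matches the paper exactly (Corollary \ref{RLchainZ} plus the fact that $\Z(G)=1$ forces $G$ to be a path), and your argument for (\ref{rigid2:2PP}) $\Rightarrow$ (\ref{rigid2:R2}) is the same choice the paper makes ($\alpha$ the left endpoints, $\beta$ the right endpoints of the two parallel paths), only spelled out in more detail than the paper's one-line assertion. The genuine gap is in (\ref{rigid2:Z2}) $\Rightarrow$ (\ref{rigid2:2PP}). The paper does not prove this at all: it cites Row \cite{Row12}, where the equivalence of $\Z(G)=2$ with being a graph of two parallel paths is a published theorem, and that equivalence (in both directions) is the substantive external input to the corollary. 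You instead propose to prove it from scratch by running a zero forcing process from a size-$2$ zero forcing set and showing that the two $\Z$-forcing chains, with the cross edges, admit a non-crossing parallel drawing. But you only outline this and explicitly defer the hard part (``one must argue carefully\dots is where the care is needed''): you have not shown that two cross edges cannot interleave, nor handled the orientation of the two chains, the placement of the two initial vertices, chains of very different lengths, or a vertex of one chain adjacent to several vertices of the other. As written, the step that carries the real content of the corollary is a plan rather than a proof, so the argument is incomplete unless you either carry out that analysis in full or, as the paper does, invoke \cite{Row12} for (\ref{rigid2:Z2}) $\Leftrightarrow$ (\ref{rigid2:2PP}).

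One further remark: if you do intend to prove (\ref{rigid2:Z2}) $\Rightarrow$ (\ref{rigid2:2PP}) directly, be aware that the laminarity you want is not an immediate consequence of the single observation that a forcing vertex has all other neighbors blue; you need to compare the times at which forces occur on the two chains and argue about both endpoints of each cross edge, and you may need to reverse one chain's order to obtain the non-crossing drawing. Citing the known characterization is the shorter and safer route.
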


\bpf By \cite{Row12},
 \eqref{rigid2:Z2} $\Leftrightarrow$ \eqref{rigid2:2PP}.  By Corollary \ref{RLchainZ} and the fact that $G$ is not a path, \eqref{rigid2:R2} $\Rightarrow$ \eqref{rigid2:Z2}.  If $G$ is a graph on two parallel paths $p_1$ and $p_2$ drawn horizontally, then $\p=\{p_1,p_2\}$ is an $(\alpha,\beta)$-rigid linkage where $\alpha$ is the left endpoints and $\beta$ is the right endpoints, so \eqref{rigid2:2PP} $\Rightarrow$ \eqref{rigid2:R2}. 
\epf


In \cite{MWZ12}, Mayhew et al.~characterized graphs with a vital linkage of order $2$. We recall some of the terminology they introduced and some of their results:
 A {\em chord} in a linkage $\p$ is an edge that does not belong to any path in $\p$ but it has both endpoints in a single path $p\in \p$. All other edges of $G$ that do not belong to any path of $\p$ are called {\em rung} edges. 
 Given a  linkage $\p$ in $G$, an edge of $G$ that is also in $E(\p)$ is a {\em path edge}.
A
{\it linkage minor} of $G$ for a  chordless linkage $\p$ is a linkage $H$ that is a minor of $G$
obtained by possibly contracting some path edges  of $G$ and  possibly deleting some rung edges of $G$. A graph $G$ has an $XX$ {\em linkage minor}  with respect to an order 2 chordless linkage $\p$  if it
 has a linkage minor isomorphic to  the complete bipartite graph $K_{2,4}$ such that the endpoints of $\p$ are mapped
to the degree-2 vertices of $K_{2,4}$.  Figure \ref{fig:XX} illustrates an $XX$ linkage minor in which the heavier horizontal edges are path edges, and the thinner diagonal edges are rung edges.  In \cite{MWZ12}, 
Mayhew et al.~proved that for any graph $G$, having a vital linkage of order $2$ is equivalent to having a chordless spanning linkage of order $2$ with no $XX$ linkage minor.  

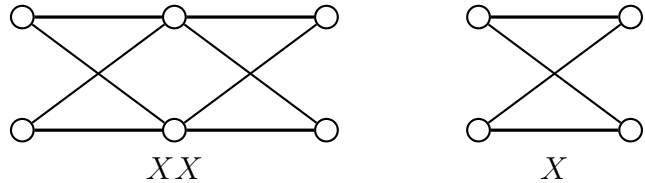
\begin{figure}[h!]
\begin{center}
\begin{tikzpicture}
\begin{scope}
\node (1) at (0,1.5) {};
\node (2) at (2,1.5) {};
\node (3) at (4,1.5) {};
\node (4) at (0,0) {};
\node (5) at (2,0) {};
\node (6) at (4,0) {};
\draw[very thick] (1) -- (2) -- (3);
\draw[very thick] (4) -- (5) -- (6);
\draw (1) -- (5) -- (3);
\draw (4) -- (2) -- (6);
\node[rectangle,draw=none] at (2,-0.5) {$XX$};
\end{scope}
\begin{scope}[xshift=6cm]
\node (r1) at (0,1.5) {};
\node (r2) at (2,1.5) {};
\node (r4) at (0,0) {};
\node (r5) at (2,0) {};
\draw[very thick] (r1) -- (r2);
\draw[very thick] (r4) -- (r5);
\draw (r1) -- (r5);
\draw (r4) -- (r2);
\node[rectangle,draw=none] at (1,-0.5) {$X$};
\end{scope}
\end{tikzpicture}
\caption{An $XX$ linkage minor and an $X$ linkage minor}
\label{fig:XX}
\end{center}\vspace{-15pt}
\end{figure}  
 

   Next we characterize graphs with order 2  spanning rigid linkages in terms of a more restrictive kind of linkage minor.  Let  $G$ be a graph  with a  chordless $(\alpha,\beta)$-linkage $\p=\{p_i\}_{i=1}^t$ such that the endpoints of $p_i$ are $\alpha_i$ and $\beta_i$. An {\em $(\alpha,\beta)$-linkage minor} for $\p$   is a linkage minor for $\p$ in which the points in $\alpha$ and $\beta$ retain their labels.
A graph $G$ has an $X$ {\em $(\alpha,\beta)$-linkage minor} for a chordless linkage $\p$  if there is an $(\alpha,\beta)$-linkage minor for two  paths $p_k$ and $p_\ell$ in  $\p$ that is isomorphic to $C_{4}$ with $\alpha_k$   not adjacent to $\alpha_\ell$  in $C_4$ and $\beta_k$  not adjacent to $\beta_\ell$; see  Figure \ref{fig:XX}.  

\begin{thm}\label{rigid2X}
Let $G$ be a graph with an $(\alpha,\beta)$-linkage $\p=\{p_i\}_{i=1}^t$.  If $\p$ is $(\alpha,\beta)$-rigid, then no path in $\p$ has a chord and  $G$ does not have an $X$  $(\alpha,\beta)$-linkage minor  for $\p$.
Conversely, if 
 $\p$ is an order $2$ spanning chordless  $(\alpha,\beta)$-linkage with no $X$ $(\alpha,\beta)$-linkage minor, then  $\p$ is $(\alpha,\beta)$-rigid.
\end{thm}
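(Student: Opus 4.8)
The plan is to prove the two implications separately, with the main device being to reinterpret ``having an $X$ $(\alpha,\beta)$-linkage minor'' as a purely combinatorial ``crossing pair of rungs'' condition.

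\emph{Rigid implies no chord and no $X$ $(\alpha,\beta)$-linkage minor.} Suppose $\p=\{p_i\}_{i=1}^t$ is $(\alpha,\beta)$-rigid. If some $p_i$ had a chord $xy$, with $x$ before $y$ along $p_i$, then replacing the portion of $p_i$ between $x$ and $y$ by the edge $xy$ produces a strictly shorter path with the same endpoints, hence a second $(\alpha,\beta)$-linkage --- a contradiction --- so $\p$ is chordless and the notion of an $X$ $(\alpha,\beta)$-linkage minor is defined. Next I would record the following reformulation: for distinct paths $p_k,p_\ell$, oriented $\alpha_k\to\beta_k$ and $\alpha_\ell\to\beta_\ell$, there is an $X$ $(\alpha,\beta)$-linkage minor for $\{p_k,p_\ell\}$ if and only if there are two rungs $e,f$ between $p_k$ and $p_\ell$ whose $p_k$-ends $x_e,x_f$ and $p_\ell$-ends $y_e,y_f$ satisfy: $x_e$ strictly precedes $x_f$ on $p_k$, and $y_e$ strictly follows $y_f$ on $p_\ell$. (For ``$\Leftarrow$'', contract $p_k$ to the prefix ending at $x_e$ and the suffix beginning at $x_f$, contract $p_\ell$ to the prefix ending at $y_f$ and the suffix beginning at $y_e$, and delete every other rung; the result is a $C_4$ on $\alpha_k,\beta_k,\alpha_\ell,\beta_\ell$ with $\alpha_k\not\sim\alpha_\ell$ and $\beta_k\not\sim\beta_\ell$. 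For ``$\Rightarrow$'', such a $C_4$ minor exhibits these two rungs.) Given such $e,f$, the paths $p_k':=p_k[\alpha_k,x_e]\cup e\cup p_\ell[y_e,\beta_\ell]$ and $p_\ell':=p_\ell[\alpha_\ell,y_f]\cup f\cup p_k[x_f,\beta_k]$ are vertex-disjoint --- the two arcs taken from $p_k$ are disjoint since $x_e$ precedes $x_f$, the two from $p_\ell$ since $y_e$ follows $y_f$, and $p_k\cap p_\ell=\emptyset$ --- so $\{p_k',p_\ell'\}\cup(\p\setminus\{p_k,p_\ell\})$ is an $(\alpha,\beta)$-linkage different from $\p$ (it matches $\alpha_k$ with $\beta_\ell$), contradicting rigidity.

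\emph{The order-$2$ converse.} Let $\p=\{p_1,p_2\}$ be spanning, chordless, and without an $X$ $(\alpha,\beta)$-linkage minor; by the reformulation, $G$ has no crossing pair of rungs, so $G$ can be drawn in the plane with $p_1,p_2$ as parallel horizontal paths traversed $\alpha_1\to\beta_1$ and $\alpha_2\to\beta_2$, with all rungs drawn without crossings inside the strip between them, and then $\alpha_1,\beta_1,\beta_2,\alpha_2$ appear on the outer face in this cyclic order. Suppose $\q=\{q_1,q_2\}\ne\p$ is an $(\alpha,\beta)$-linkage. If $\q$ matches $\alpha_1$ with $\beta_2$ and $\alpha_2$ with $\beta_1$, then $q_1,q_2$ are vertex-disjoint arcs of a planar drawing joining the interleaved boundary pairs $\{\alpha_1,\beta_2\}$ and $\{\alpha_2,\beta_1\}$, which the Jordan curve theorem forbids. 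Hence $\q$ matches $\alpha_1$ with $\beta_1$ and $\alpha_2$ with $\beta_2$, so $\q$ has the same pattern as $\p$ and $\p$ is not the unique linkage with its pattern. But an $XX$ linkage minor for $\p$ contains a crossing pair of rungs (from the block structure of $K_{2,4}$: the rung joining the $\alpha_1$-end block of $p_1$ to the middle block of $p_2$, together with the rung joining the middle block of $p_1$ to the $\alpha_2$-end block of $p_2$), so $\p$ has no $XX$ linkage minor either, whence by the characterization of Mayhew et al.~\cite{MWZ12} the chordless spanning order-$2$ linkage $\p$ is vital, in particular unique --- a contradiction. Therefore $\p$ is $(\alpha,\beta)$-rigid.

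\emph{Expected main obstacle.} The delicate work is in the converse, especially the same-pattern subcase. One must (i) pin down the $X$-minor/crossing-rung dictionary with the orientation bookkeeping correct (so that the $C_4$ really has $\alpha_k$ opposite, not adjacent to, $\alpha_\ell$); (ii) justify that the absence of crossing rungs yields a planar drawing with the four terminals on the outer face in the claimed cyclic order; and (iii) carry out the $XX$-to-$X$ reduction carefully enough to apply \cite{MWZ12}. A self-contained alternative to (iii) would treat the same-pattern subcase directly, decomposing $q_1$ and $q_2$ into maximal subpaths along $p_1$ and $p_2$ and using the non-crossing drawing to force $q_1=p_1$ and $q_2=p_2$; this avoids \cite{MWZ12} at the cost of more case analysis.
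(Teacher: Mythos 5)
Your proof is correct, and while the forward direction coincides with the paper's argument (shortcut a chord; reroute along the two crossing rungs of the $X$ to get a second $(\alpha,\beta)$-linkage with the transposed pattern --- your explicit dictionary between $X$ $(\alpha,\beta)$-linkage minors and crossing pairs of rungs is just a cleaner packaging of the paper's ``first rung vertex'' bookkeeping), your converse takes a genuinely different route. The paper stays inside its own forcing machinery: from the two-parallel-paths drawing it notes that zero forcing from the two left ($\alpha$) endpoints has $\{p_1,p_2\}$ as its chain set, and then Corollary \ref{RLchainZ} (really the first half of the proof of Theorem \ref{Zsharplinkage}, with initial set $\alpha$ and final active set $\beta$) delivers $(\alpha,\beta)$-rigidity; this choice also feeds directly into Corollary \ref{rigid2}. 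You instead prove uniqueness of the $(\alpha,\beta)$-linkage directly, splitting on the pattern of a hypothetical competitor: the crossed pattern is excluded by planarity and the cyclic order $\alpha_1,\beta_1,\beta_2,\alpha_2$ of the terminals on the outer face, and the same-pattern case is excluded by noting that an $XX$ linkage minor contains a crossing pair of rungs (so no $X$ minor implies no $XX$ minor) and invoking \cite{MWZ12}. What each buys: the paper's route is self-contained in its forcing framework and directly yields the zero-forcing connection; yours avoids that machinery but imports \cite{MWZ12}, and it needs that theorem in pointwise form --- that the given chordless spanning order-$2$ linkage with no $XX$ minor is itself vital --- which is stronger than the existence-level paraphrase quoted in this paper, so verify the exact statement or use your self-contained same-pattern analysis instead. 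Finally, note that both your argument and the paper's rest on the same unproved geometric step, that absence of crossing rungs (equivalently, of an $X$ $(\alpha,\beta)$-linkage minor) yields the parallel-paths drawing with $\alpha$ on the left; you flag it, the paper simply asserts it, so you are no worse off there.
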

\bpf Denote  the endpoints of $p_i\in\p$ by $\alpha_i\in\alpha$ and $\beta_i\in\beta$. 

Suppose first that $p_i$ has a chord $vu$ with $\alpha_i,v,u,\beta_i$ in path order on $p_i$. Then $p_i$ can be replaced by $p'_i$ formed from the subpath $\alpha_i$ to $v$, edge $vu$, and the subpath $u$ to $\beta_i$.  Thus $\p$ is not 
$(\alpha,\beta)$-rigid.

Now suppose $\p$ is chordless and  $G$ has an $X$  $(\alpha,\beta)$-linkage minor  for $\p$.  Without loss of generality assume the $X$ minor involves paths $p_1$ and $p_2$.  
We define  another $(\alpha,\beta)$-linkage $\p'=\{p_i'\}_{i=1}^t$ where $p_i'=p_i$ for $i=3,\dots, t$ and $p'_i$ is defined as follows for $i=1,2$: Define  $f(1)=2$ and $f(2)=1$.
Let $v_i$ be the first vertex (in path order beginning with $\alpha_i$) that has a rung edge forming part of the $X$, and  let $u_{f(i)}$ be the other endpoint of that rung edge; 
thus  $v_i$ precedes $u_i$ in path order from $\alpha_i$ for $i=1,2$.  For $p'_i$ use the  subpath of $p_i$ from $\alpha_i$ to $v_i$, the edge $v_iu_{f(i)}$ and the subpath of $p_{f(i)}$ from $u_{f(i)}$ to $\beta_{f(i)}$. Then $\p$ is not 
$(\alpha,\beta)$-rigid, because $\p'$ is an $(\alpha,\beta)$-linkage. 


 Now assume that $\p$ is an order $2$ spanning chordless   $(\alpha,\beta)$-linkage
and $G$ does not have an  $X$ $(\alpha,\beta)$-linkage minor for $\p$. Then $G$ can be drawn as a graph on two parallel paths with the paths $p_1$ and $p_2$ drawn horizontally.  Then $p_1$ and $p_2$ are $\Z$-chain sets,  where $\alpha$ is the set of left endpoints and $\beta$ is the set of right endpoints.  By Corollary \ref{RLchainZ}, $\p$ is rigid. \epf


\section{Rigid linkages and matrix eigenvalue multiplicities}\label{sevalmult}

In this section we use rigid linkages to establish new bounds on multiplicities of eigenvalues of symmetric matrices described by a graph. 
We begin by recalling needed definitions from the literature.  A graph $G$  with vertex set $\{1,2,\ldots, n\}$ describes the set $\s(G)$ of  real symmetric matrices,  $A=[a_{ij}]$, for which $a_{ij}\neq 0$ when  $ij$ is an edge of $G$,  and $a_{ij}=0$ when $i\neq j$ and $ij$ is not an edge of $G$.  Note that there are no constraints on the diagonal entries of matrices in $\s(G)$.  The multiplicity  of 
the eigenvalue $\lam$ of a real symmetric matrix  $A$ is denoted by $\mult_A(\lam)$; that is, $\mult_A(\lam)= \mbox{dim} \{ {\bf x}\in \mathbb{R}^n: A{\bf x}= \lam {\bf x}\}$.  
The multiset of all eigenvalues of $A$ is denoted by {\it $\spec (A)$}, and we write $\spec(A)= \{ \lambda_1^{(m_1)}, \ldots, \lambda_q^{(m_q)} \} $ 
to indicate that the distinct eigenvalues  of $A$ are $\lambda_1, \ldots, \lambda_q$ with multiplicities $m_1, \ldots, m_q$ respectively.
 The maximum multiplicity of $G$, denoted $\M(G)$,
is the maximum of $\mult_A(\lam)$ over all $A\in \s(G)$ and all eigenvalues of $A$. The minimum rank among matrices in $\s(G)$ is denoted by $\mr(G)$.  As $\lambda I-A \in S(G)$ if and only if $A\in \s(G)$, 
$\mr(G)+\M(G)=n$. 

Given an $n\times n$ matrix,  and subsets $\alpha$ and $\beta$ of $\{1,2,\ldots, n\}$, we use $A[\alpha, \beta]$ (respectively, $A(\alpha, \beta)$)  to 
denote the submatrices of $A$ obtained by keeping (respectively, deleting) the rows with indices in $\alpha$ and columns with indices in $\beta$.  
If $\alpha=\beta$, we simplify the notation to $A[\alpha]$ and $A(\alpha)$.

Let $G$ be a graph and $A=\begin{bmatrix}a_{ij}\end{bmatrix}\in\s(G)$. A {\it linear subgraph} of $G$ is a spanning subgraph whose components are edges or cycles. As a consequence, a graph with isolated vertices does not have a linear subgraph. We introduce the term  generalized linear subgraph to extend the concept of linear subgraph to allow isolated vertices.
A  {\it generalized linear subgraph} of $G$ is a spanning subgraph 
of $G$ each of whose connected components is an isolated vertex, an edge or a cycle.  The main property of linear subgraphs of $G$ is that they correspond to the nonzero terms in the expansion of the determinant of the adjacency matrix of $G$.  Our definition guarantees that generalized linear subgraphs of $G$ maintain this property for all $A \in \s(G)$. 

 Let $C$ be a generalized linear subgraph of $G$. The \emph{weight} of $C$ is 
\[w(C)=\prod_{i\in C_I} a_{ii}\prod_{\{i,j\}\in C_E} a_{ij}^2\prod_{\{i,j\}\in E(C)\setminus C_E}a_{ij} ,\]
where $C_I$ is the set of isolated vertices of $C$, and $C_E$ is the set of  components of order two.  

Next, we  introduce additional notation. The  symmetric group of   $\{1,\ldots, t\}$ is denoted by $\mathfrak{S}_t$.
The {\it disjoint union} $G \du H$ of the graphs $G$ and $H$ is the graph formed by making the vertex sets of $G$ and $H$ disjoint: $V(G \du H)= V(G) \du V(H$) and $E(G\du H)= E(G) \du E(H)$. 

\begin{defn}
Let $G$ be a graph, $A=\begin{bmatrix}a_{ij}\end{bmatrix}\in\s(G)$, and let $\alpha=\{\alpha_i\}_{i=1}^t$ and $\beta=\{\beta_i\}_{i=1}^t$ be  subsets of $V(G)$.  Let $\sigma$  be a permutation of $\{1,\ldots,t\}$.  An \emph{$(\alpha,\beta_\sigma)$-linear subgraph} of $G$ is a spanning subgraph $H=\p\du C$ of $G$ such that $\p$ is an $(\alpha,\beta)$-linkage with pattern $\{\{\alpha_i,\beta_{\sigma(i)}\}: i=1,\dots,t\}$, and $C$ is a generalized linear subgraph of $G -V(\p)$.
The number of cycles of $H$ is denoted by $c(H)$.  Define 
\[w(\p)=\prod_{\{i,j\}\in E(\p)} a_{ij} \text{ and }w(H)=w(\p)w(C).\]

\end{defn}

Observe that if $G$ does not have isolated vertices, it is still possible for $G-V(\p)$ to have isolated vertices. Moreover, since a path in a linkage could have exactly one vertex, an isolated vertex in $G$ is not necessarily in $C$. Notice that isolated vertices in $\p$ do not contribute to any weight to $w(\p)$, but isolated vertices in $C$ do contribute to $w(C)$.

The following theorem gives a combinatorial formula for a given minor of a matrix $A \in \s (G)$.  It can be proven using arguments similar to those in 
\cite{BLP} which gives the formula in the special case that $A$ is the adjacency matrix of $G$.  We note that a combinatorial description of the determinant of the 
adjacency matrix was given earlier in \cite{H}; we find the language in \cite{BLP} better fits the setting of rigid linkages.

\begin{thm}\label{cycledet}
Let $G$ be a graph on $\{1,\ldots,n\}$, $A\in\s(G)$, and $\alpha=\{\alpha_1, \ldots, \alpha_t\}$ and  $\beta=\{\beta_1, \ldots, \beta_t\}$ be $t$-element  subsets of $V(G)$.  Then 
\[\det(A(\alpha,\beta))=(-1)^{\sum_{i=1}^t(\alpha_i+\beta_i)}\sum_{\sigma\in\mathfrak{S}_t}\sgn(\sigma)\sum_{H\in\mathcal{L}_\sigma}(-1)^{|E(H)|}(-2)^{c(H)}w(H), \]
where  $\mathcal{L}_\sigma$ is the set of $(\alpha,\beta_{\sigma})$-linear subgraphs of $G$.  
\end{thm}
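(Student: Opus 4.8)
The plan is to prove this by an expansion argument, computing $\det(A(\alpha,\beta))$ from the Leibniz formula and grouping the nonzero terms according to the combinatorial structure they induce on $G$. First I would set up notation: $A(\alpha,\beta)$ is an $(n-t)\times(n-t)$ matrix whose rows are indexed by $V(G)\setminus\alpha$ and whose columns are indexed by $V(G)\setminus\beta$, though not in the natural order. Write $\det(A(\alpha,\beta))$ as a signed sum over bijections $\pi\colon V(G)\setminus\alpha\to V(G)\setminus\beta$ of $\sgn(\pi)\prod_{v\notin\alpha}a_{v,\pi(v)}$, where the sign $\sgn(\pi)$ must be computed relative to the chosen orderings of the row and column index sets; this is where the global prefactor $(-1)^{\sum_i(\alpha_i+\beta_i)}$ will come from, via a standard computation comparing the position of the deleted indices. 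A term is nonzero exactly when every factor $a_{v,\pi(v)}$ is nonzero, i.e. either $v=\pi(v)$ (a diagonal entry) or $v\pi(v)\in E(G)$.

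Next I would extract the functional-digraph structure of $\pi$. Since $\pi$ is a bijection on $V(G)\setminus\alpha$, think of the directed graph on $V(G)$ with an arc $v\to\pi(v)$ for each $v\notin\alpha$; the vertices in $\alpha$ have out-degree $0$ and the vertices in $\beta$ have in-degree $0$ (among these arcs), while all other vertices have in- and out-degree exactly $1$. Decompose this digraph into maximal directed trails: the vertices of $\alpha$ and $\beta$ that are not fixed become the endpoints of directed paths from some $\alpha_i$ to some $\beta_j$, giving a permutation $\sigma\in\mathfrak{S}_t$ with $\pi(\ldots)=\beta_{\sigma(i)}$ read off path by path; the remaining components are directed cycles (of length $\ge 2$, using off-diagonal entries) and fixed points (diagonal entries). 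Fixed points $v$ with $v\notin\alpha\cup\beta$ contribute $a_{vv}$, which is exactly the isolated-vertex weight in a generalized linear subgraph of $G-V(\p)$; a directed cycle contributes $\prod a_{e}$ over its arcs, and since for each undirected $2$-cycle there are exactly two orientations, cycles of length $2$ get squared weight, matching the definition of $w(C)$. So each nonzero $\pi$ determines, and up to the $2^{c}$ choices of cycle orientation is determined by, a pair $\sigma$ and an $(\alpha,\beta_\sigma)$-linear subgraph $H=\p\du C$.

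The remaining work is bookkeeping of signs and multiplicities, and this is the step I expect to be the main obstacle — not conceptually hard, but easy to get wrong. I would verify three things: (i) the contribution of the path part $\p$ is $w(\p)$ with no extra sign beyond what gets absorbed, since each path arc is an off-diagonal entry used once; (ii) $\sgn(\pi)$ factors as $\sgn(\sigma)$ times a contribution from the cycle/fixed-point part, and the cycle part contributes $(-1)^{(\text{length}-1)}$ per cycle, which combined across all components rearranges into the stated $(-1)^{|E(H)|}$ once the fixed points (contributing no edges) and the path edges are accounted for; here one uses that a permutation that is a single $k$-cycle has sign $(-1)^{k-1}$, and that $|E(H)|=|E(\p)|+\sum(\text{cycle lengths})$; (iii) each undirected generalized linear subgraph $C$ with $c(H)$ cycles arises from exactly $2^{c(H)}$ directed versions, all with the same weight and the same sign contribution, producing the factor $(-2)^{c(H)}=2^{c(H)}\cdot(-1)^{c(H)}$, with the $(-1)^{c(H)}$ being absorbed into the $(-1)^{|E(H)|}$ reconciliation. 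I would assemble these, check the formula against the known adjacency-matrix case in \cite{BLP} and against a tiny example (say $A$ itself with $\alpha=\beta=\emptyset$, recovering the usual cycle-cover formula for the determinant, and the case $t=1$, $\alpha=\beta=\{v\}$), and that completes the proof. Throughout, I would lean on the cited combinatorial determinant expansion of \cite{BLP,H} so that only the adaptation to deleted rows/columns and to $\s(G)$ rather than the adjacency matrix needs detailed justification.
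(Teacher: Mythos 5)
Your proposal is correct and follows essentially the same route the paper itself relies on: the paper gives no detailed argument but defers to the combinatorial expansion of \cite{BLP} (adjacency-matrix case), and your Leibniz-formula expansion over bijections $V(G)\setminus\alpha\to V(G)\setminus\beta$, decomposed into linking paths, cycles, and fixed points, with the Jacobi-type prefactor $(-1)^{\sum_i(\alpha_i+\beta_i)}$ and the $2^{c(H)}$ orientation count for cycles of length at least three, is exactly that argument adapted to $\s(G)$. The sign reconciliation you flag does work out as you describe (a $k$-cycle of the permutation has sign $(-1)^{k-1}$, order-two components contribute one edge and the squared entry $a_{ij}^2$ from the single transposition, and the leftover $(-1)^{c(H)}$ folds into $(-2)^{c(H)}$), so only that routine bookkeeping remains to be written out.
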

\noindent
We note that if $\alpha=\emptyset$ and $\beta=\emptyset$, 
then Theorem \ref{cycledet} gives the formula:
\[\det(A) =\sum_{H}(-1)^{|E(H)|}(-2)^{c(H)}w(H), \]
where the sum is over all generalized linear subgraphs $H$ of $G$.

\begin{lem}\label{determinant}
Let $G$ be a graph having  an $(\alpha,\beta)$-rigid linkage $\p$ and $A\in \s (G)$. Then 
\[\det(A(\alpha,\beta))=\pm w(\p) \det(A(V(\p))). \] 
\end{lem}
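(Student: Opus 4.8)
The plan is to apply the combinatorial determinant formula of Theorem~\ref{cycledet} to $\det(A(\alpha,\beta))$, to use $(\alpha,\beta)$-rigidity of $\p$ to collapse the resulting sum over $\mathfrak{S}_t$ to the single term $\sigma=\mathrm{id}$, and then to recognize the surviving sum as $\det(A(V(\p)))$ by a second application of Theorem~\ref{cycledet} (its $\alpha=\beta=\emptyset$ form).

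First I would fix the labeling so that $\alpha_i$ and $\beta_i$ are the endpoints of $p_i$; then $\p$ has pattern $\{\{\alpha_i,\beta_i\}:i=1,\dots,t\}$ and $\sum_i(\alpha_i+\beta_i)$ is the sum of the labels in $\alpha$ plus the sum of the labels in $\beta$ (both order-independent, so this choice of labeling is harmless). Theorem~\ref{cycledet} then gives $\det(A(\alpha,\beta))=(-1)^{\sum_i(\alpha_i+\beta_i)}\sum_{\sigma\in\mathfrak{S}_t}\sgn(\sigma)\sum_{H\in\mathcal{L}_\sigma}(-1)^{|E(H)|}(-2)^{c(H)}w(H)$. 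For any $\sigma$ with $\mathcal{L}_\sigma\neq\emptyset$, pick $H=\p'\du C\in\mathcal{L}_\sigma$; the component $\p'$ has pattern $\{\{\alpha_i,\beta_{\sigma(i)}\}:i\}$, so $\p'$ is an $(\alpha,\beta)$-linkage, whence $\p'=\p$ by $(\alpha,\beta)$-rigidity. Since a linkage determines its pattern, this yields $\{\{\alpha_i,\beta_{\sigma(i)}\}:i\}=\{\{\alpha_i,\beta_i\}:i\}$.

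The heart of the argument is to deduce $\sigma=\mathrm{id}$ from that pattern equality; I expect this to be the only delicate point, the rest being sign bookkeeping. It rests on the fact that the paths of $\p$ are vertex-disjoint, so the only coincidences among $\alpha_1,\beta_1,\dots,\alpha_t,\beta_t$ are the equalities $\alpha_i=\beta_i$ arising from one-vertex paths. If $\alpha_i\neq\beta_i$ then $\alpha_i\notin\beta$, so $\{\alpha_i,\beta_{\sigma(i)}\}$ is a genuine two-element set, and matching it against some $\{\alpha_j,\beta_j\}$ in the pattern of $\p$ (which must also be a two-element set) forces the unique non-$\beta$ member $\alpha_i=\alpha_j$, hence $j=i$ and then $\beta_{\sigma(i)}=\beta_i$, i.e.\ $\sigma(i)=i$. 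If instead $\alpha_i=\beta_i$, the single vertex of $p_i$ belongs to no other endpoint pair, so $\sigma(i)\neq i$ would make $\{\alpha_i,\beta_{\sigma(i)}\}$ a two-element set not occurring in the pattern of $\p$, a contradiction; again $\sigma(i)=i$. Thus $\mathcal{L}_\sigma=\emptyset$ for $\sigma\neq\mathrm{id}$, and (again by rigidity) $\mathcal{L}_{\mathrm{id}}$ is exactly the set of subgraphs $\p\du C$ with $C$ a generalized linear subgraph of $G-V(\p)$.

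Finally, for $H=\p\du C\in\mathcal{L}_{\mathrm{id}}$ one has $|E(H)|=|E(\p)|+|E(C)|$, $c(H)=c(C)$ since a linkage has no cycles, and $w(H)=w(\p)w(C)$; substituting gives $\det(A(\alpha,\beta))=(-1)^{\sum_i(\alpha_i+\beta_i)+|E(\p)|}\,w(\p)\sum_C(-1)^{|E(C)|}(-2)^{c(C)}w(C)$, the sum over generalized linear subgraphs $C$ of $G-V(\p)$. Since $A(V(\p))\in\s(G-V(\p))$, the $\alpha=\beta=\emptyset$ case of Theorem~\ref{cycledet} identifies this last sum with $\det(A(V(\p)))$, so $\det(A(\alpha,\beta))=(-1)^{\sum_i(\alpha_i+\beta_i)+|E(\p)|}\,w(\p)\det(A(V(\p)))=\pm\,w(\p)\det(A(V(\p)))$.
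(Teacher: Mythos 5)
Your proposal is correct and follows essentially the same route as the paper: apply Theorem~\ref{cycledet}, use $(\alpha,\beta)$-rigidity to conclude that only $\sigma=\mathrm{id}$ contributes, factor the weight and sign of $\p$ out of the surviving sum, and recognize the remainder as $\det(A(V(\p)))$ via the $\alpha=\beta=\emptyset$ case of the same theorem. The only difference is that you spell out the pattern-matching argument (using vertex-disjointness of the paths and the single-vertex case) showing $\mathcal{L}_\sigma=\emptyset$ for $\sigma\neq\mathrm{id}$, which the paper asserts in a single sentence.
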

\begin{proof}
Since $\p$ is a rigid $(\alpha,\beta)$-linkage in $G$,  the identity map is the 
only permutation $\sigma$ for which there is an $(\alpha,\beta_{\sigma})$-linear subgraph of $G$.
Thus, by Theorem \ref{cycledet}, 
\bea
\det A(\alpha,\beta) 
&=&
(-1)^{\sum_{i=1}^t(\alpha_i+\beta_i)}\sum_{H \in \mathcal{L}_{\tiny \mbox{id}}}
(-1)^{|E(H)|}(-2)^{c(H)}w(H) \\
 &=& \pm (-1)^{\sum_{i=1}^t(\alpha_i+\beta_i)}
 \sum_{H=\p \du C \in \mathcal{L}_{\tiny \mbox{id}}}(-1)^{|E(C)|}(-2)^{c(C)}w(\p)w(C) \\
 &=& 
 \pm w(\p)
 \sum_{ C } (-1)^{|E(C)|}(-2)^{c(C)}w(C) \\
 &=& \pm w(\p)\det(A(V(\p))), 
\eea
where the final sum is over all generalized linear subgraphs of $G-V(\p)$.
\end{proof}

\begin{thm}\label{nullity}
Let $\p$ be an $(\alpha,\beta)$-rigid linkage of order $t$ in a graph $G$, and let  $A\in\mathcal{S}(G)$. 
Then
\[\nul(A(V(\p)) \geq \nul(A)-t.\]
\end{thm}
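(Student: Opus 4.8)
The plan is to exploit the determinantal identity of Lemma~\ref{determinant} together with a rank/nullity counting argument. The key observation is that $\nul(A) = \mult_A(0)$, and by interlacing-type reasoning, deleting rows and columns can change the nullity only in a controlled way. Specifically, I would first recall the standard fact that for a symmetric matrix $A$ and any index set $\gamma$ with $|\gamma|=t$, one has $\nul(A(\gamma)) \ge \nul(A) - t$ and $\nul(A(\gamma)) \le \nul(A) + t$ (this follows from Cauchy interlacing applied $t$ times, removing one symmetric pair of row/column at a time). Applying this with $\gamma = V(\p)$ would be too lossy since $|V(\p)|$ can be much larger than $t$; the point of rigidity is to do better.

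The heart of the argument is to relate $\nul(A(V(\p)))$ to $\nul(A(\alpha,\beta))$, which is a $(n-t)\times(n-t)$ matrix obtained by a \emph{non-symmetric} deletion. First I would establish that $\rank(A(\alpha,\beta)) \ge \rank(A) - 2t$ is false in the direction we need — rather, I want a lower bound on the rank of $A(V(\p))$. The clean route: Lemma~\ref{determinant} tells us $\det(A(\alpha,\beta)) = \pm w(\p)\det(A(V(\p)))$, and since $w(\p)\ne 0$ (it is a product of nonzero off-diagonal entries of $A$), we get $\det(A(\alpha,\beta)) = 0 \iff \det(A(V(\p))) = 0$. This extends: for any scalar shift, $\lambda I - A \in \s(G)$ as well, and $\p$ remains an $(\alpha,\beta)$-rigid linkage in $G$, so applying Lemma~\ref{determinant} to $\lambda I - A$ gives $\det((\lambda I - A)(\alpha,\beta)) = \pm w(\p)\det((\lambda I - A)(V(\p)))$ for all $\lambda$ — in particular at $\lambda = 0$. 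More importantly, I can apply the lemma not just to $A$ but to restrictions: this will let me control ranks via the fact that a matrix has rank $\ge r$ iff some $r\times r$ minor is nonzero.

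The key step is therefore: $\rank(A(\alpha,\beta)) \le \rank(A(V(\p)))$, proved by showing any nonzero $r\times r$ minor of $A(\alpha,\beta)$ forces a nonzero $r\times r$ minor of $A(V(\p))$. Indeed, a square submatrix of $A(\alpha,\beta)$ indexed by rows $R$ (avoiding $\alpha$) and columns $S$ (avoiding $\beta$) — once we restrict to the principal submatrix $A[V(\p)\cup R \cup S]$ or argue directly — can be analyzed by Lemma~\ref{determinant} applied to the induced subgraph $G[\text{relevant vertices}]$, where $\p$ restricted is still rigid by Observation~\ref{obs:sub} and Proposition~\ref{rigidsublinkage}. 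Granting $\rank(A(\alpha,\beta)) \le \rank(A(V(\p)))$, we conclude: since $A(\alpha,\beta)$ is obtained from $A$ by deleting $t$ rows and $t$ columns, $\rank(A(\alpha,\beta)) \ge \rank(A) - 2t$; hence $\rank(A(V(\p))) \ge \rank(A) - 2t$... which gives $\nul(A(V(\p))) \le \nul(A) + (|V(\p)| - n) + 2t$ — the wrong inequality. So instead I would run the argument the other way: show $\nul(A(V(\p))) \ge \nul(A(\alpha,\beta))$ via $\rank(A(V(\p))) \le \rank(A(\alpha,\beta)) + (|V(\p)| - (n-t))\cdot 0$...

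Let me restate the clean plan: the correct chain is $\nul(A(\alpha,\beta)) \ge \nul(A) - 2t$ is not quite right either since it is a rectangular deletion; rather $\nul_{\text{col}}$ and $\nul_{\text{row}}$... The cleanest is: $A(\alpha,\beta)$ has $n-t$ rows, $\rank A(\alpha,\beta) \ge \rank A - 2t \ge (n - \nul A) - 2t$, hmm. I would in fact prove directly that $\nul(A(V(\p))) \ge \nul(A) - t$ by the following: take a maximal set of independent columns; actually the honest approach is to show $\rank(A(V(\p))) \le \rank(A(\alpha,\beta))$, combine with $\rank(A(\alpha,\beta)) \le \rank(A[\,\overline{\alpha}, \overline{\beta}\,])$ trivially, and then use $\rank(A(\alpha,\beta)) = \rank(A) - (\text{something} \le 2t)$, finally $\nul(A(V(\p))) = |V(\p)| - \rank(A(V(\p))) \ge |V(\p)| - \rank(A(\alpha,\beta)) = |V(\p)| - (n-t) + \nul(A(\alpha,\beta)) \ge |V(\p)| - (n-t) + \nul(A) - 2t$. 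Since $|V(\p)| \ge t$ in general this is still not obviously $\ge \nul A - t$.

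\textbf{The main obstacle}, which I expect to be the crux, is getting the bookkeeping of the deletions exactly right so that rigidity buys back a full factor — reducing the naive loss of $2t$ (from deleting $t$ rows \emph{and} $t$ columns) down to $t$. I believe the resolution is: rigidity (via Lemma~\ref{determinant}) shows $\rank(A(\alpha,\beta)) = \rank(A(V(\p))) + (\text{rank contribution of the path structure})$, and the path structure, being a rigid linkage of order $t$, contributes exactly $n - t - |V(\p)| + \dots$; more precisely, $A(\alpha,\beta)$ contains, after a suitable permutation of rows/columns, a block-triangular structure whose "linkage block" is a product of the $w(p_i)$'s along anti-diagonals, contributing full rank $|V(\p)| - t$ independent of $A$. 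This is what lets one delete only $t$ rows effectively. I would make this precise by choosing, for each path $p_i$, the correspondence of its $|V(p_i)|-1$ interior-plus-one edges to a set of rows/columns, getting a triangular unit block, and then the Schur-complement of that block in $A(\alpha,\beta)$ is exactly (up to nonzero scaling) $A(V(\p))$. This Schur complement computation is the technical heart; once it is in place, $\rank(A(\alpha,\beta)) = \rank(A(V(\p))) + (|V(\p)| - t)$ and then $\nul(A(V(\p))) = (n-t) - \rank(A(\alpha,\beta)) + (|V(\p)| - (n-t)) + \dots$ collapses to the claimed $\nul(A) - t$ after using $\nul(A(\alpha,\beta)) \ge \nul(A) - 2t$ with the extra $|V(\p)|-t$ absorbing one of the $t$'s.
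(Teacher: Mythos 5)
Your plan stalls exactly at the step you yourself identify as the crux, and the resolution you propose does not work. The claimed identity---that the ``linkage block'' of $A(\alpha,\beta)$ is a triangular invertible block whose Schur complement is $A(V(\p))$ up to nonzero scaling, hence $\rank A(\alpha,\beta)=\rank A(V(\p))+(|V(\p)|-t)$---is false. Take $G$ to be the path $(1,2,3,4,5)$ with a pendant vertex $u$ attached to $2$ and a pendant vertex $v$ attached to $4$, and let $\p$ consist of the single path $(1,2,3,4,5)$ with $\alpha=\{1\}$, $\beta=\{5\}$; this linkage is $(\alpha,\beta)$-rigid. Choose $A\in\s(G)$ with $a_{uu}=a_{vv}=0$ and $a_{33}\neq 0$. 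The linkage block $A[\{2,3,4,5\},\{1,2,3,4\}]$ is indeed invertible (of rank $4=|V(\p)|-t$), but its Schur complement in $A(\alpha,\beta)$ has $(u,v)$ entry $-a_{u2}a_{33}a_{4v}/(a_{32}a_{43})\neq 0$ even though $a_{uv}=0$: it is not $A(V(\p))$ in any scaled sense, and in fact $\rank A(\alpha,\beta)=5$ while $\rank A(V(\p))+(|V(\p)|-t)=4$. Only the inequality $\rank A(\alpha,\beta)\geq\rank A(V(\p))+(|V(\p)|-t)$, equivalently $\nul A(V(\p))\geq\nul A(\alpha,\beta)$, is true, and that inequality is precisely where rigidity must be used; an exact-cancellation/Schur-complement identity cannot deliver it. (A further flaw: with rung edges between different paths of the linkage the block is not triangular at all; its invertibility is true, but it follows from Lemma \ref{determinant} applied to $A[V(\p)]$, not from triangularity.)

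The endgame is also off. Even granting your equality, combining it with $\nul A(\alpha,\beta)\geq\nul A-2t$ yields only $\nul A(V(\p))\geq\nul A-2t$; there is nothing left to ``absorb'' a $t$. The correct accounting is elementary and needs no rigidity: $A(\alpha,\beta)$ is a square matrix of order $n-t$ and $\rank A(\alpha,\beta)\leq\rank A$, so $\nul A(\alpha,\beta)\geq\nul A-t$ (each deleted row is paired with a deleted column, so each of the $t$ simultaneous deletions lowers nullity by at most one). Rigidity is needed only for $\nul A(V(\p))\geq\nul A(\alpha,\beta)$, and the paper obtains it by a device absent from your sketch: since $B=A(V(\p))$ is symmetric, it has an invertible principal submatrix $B(\gamma)$ with $|\gamma|=\nul B$; the linkage $\p$ remains $(\alpha,\beta)$-rigid in $G-\gamma$ by Observation \ref{obs:sub}, so Lemma \ref{determinant} applied to $A(\gamma)$ gives $\det A(\alpha\cup\gamma,\beta\cup\gamma)=\pm w(\p)\det B(\gamma)\neq 0$, exhibiting an invertible submatrix of $A(\alpha,\beta)$ of order $n-t-|\gamma|$ and hence $\nul A(\alpha,\beta)\leq|\gamma|=\nul A(V(\p))$. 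You would need to supply an argument of this kind (or a genuine proof that the Schur complement's rank is at least $\rank A(V(\p))$ using rigidity) to close the proof.
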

\begin{proof}
Let $B=A(V(\p))$. A classical fact (see e.g.~Corollary 8.9.2 in \cite{GR01}) is that  each symmetric matrix of rank $r$ has an invertible principal submatrix whose order 
is $r$.  Thus,  there exists a set of indices $\gamma$ such that
$B(\gamma)$ is invertible and $|\gamma|= \nul(B)=\nul A(V(\p))$.
 
By Observation \ref{obs:sub}, $\p$ is an $(\alpha,\beta)$-rigid linkage of $G-\gamma$.
  By Lemma \ref{determinant} applied to $A(\gamma)$, 
\[\det(A(\alpha\cup\gamma,\beta\cup\gamma))=\pm w(\p) \det(A(V(\p)\cup\gamma))=\pm w(\p) \det(B(\gamma))\neq 0.\]
Thus $A(\alpha,\beta)$ has an invertible submatrix of order $n-t-|\gamma|$, and 
\[
 \nul(A(V(\p)))= |\gamma| \geq \nul(A(\alpha,\beta)).\]  Since deleting a row and a column at the same time can change the nullity by at most one, 
\[ \nul (A(\alpha,\beta)) \geq \nul(A)-t, \]
and the result follows.
\end{proof}

\begin{cor}
\label{multiplicity}
Let $\p$ be an $(\alpha,\beta)$-rigid linkage of order $t$ in a graph $G$. Then, for any $A\in\mathcal{S}(G)$ and 
eigenvalue $\lam$ of $A$  
\[\mult_{A(V(\p))}(\lam) \geq \mult_{A}(\lam)-t.\]
\end{cor}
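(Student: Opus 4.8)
The plan is to reduce the corollary directly to Theorem \ref{nullity} by the standard shift trick. First I would observe that the hypotheses are inherited: $\p$ being an $(\alpha,\beta)$-rigid linkage in $G$ is a purely combinatorial property that does not depend on the matrix, so it applies equally to $A$ and to any matrix of the form $A-\lam I$.

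Next I would note the key structural fact that $A-\lam I \in \s(G)$ whenever $A\in\s(G)$: subtracting $\lam I$ only alters the diagonal entries, and $\s(G)$ places no constraints on the diagonal. Hence Theorem \ref{nullity} may be applied with $A$ replaced by $A-\lam I$, yielding
\[
\nul\bigl((A-\lam I)(V(\p))\bigr)\ \geq\ \nul(A-\lam I)-t.
\]

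Then I would translate nullities back into eigenvalue multiplicities. On the right-hand side, $\nul(A-\lam I)=\mult_A(\lam)$ by definition of multiplicity as the dimension of the $\lam$-eigenspace. On the left-hand side, the operations of deleting the rows and columns indexed by $V(\p)$ and subtracting $\lam I$ commute, so $(A-\lam I)(V(\p)) = A(V(\p)) - \lam I$ (now $I$ of the appropriate smaller size), and therefore $\nul\bigl((A-\lam I)(V(\p))\bigr) = \mult_{A(V(\p))}(\lam)$. Substituting both identities into the displayed inequality gives $\mult_{A(V(\p))}(\lam)\geq \mult_A(\lam)-t$, which is exactly the claim.

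There is essentially no obstacle here; the only point requiring a moment's care is the bookkeeping that deleting a principal submatrix and shifting by a scalar matrix commute, and that $\s(G)$ is closed under diagonal shifts (already noted in the excerpt in the remark that $\lambda I - A\in\s(G)$ iff $A\in\s(G)$). Everything else is immediate from Theorem \ref{nullity} and the definition of multiplicity, so the proof will be only a few lines.
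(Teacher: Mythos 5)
Your proposal is correct and is essentially identical to the paper's proof: the paper also applies Theorem \ref{nullity} to $B=A-\lam I$ (which lies in $\s(G)$ since only diagonal entries change) and then translates nullities of $B$ and $B(V(\p))$ back into the multiplicities of $\lam$ for $A$ and $A(V(\p))$. No gaps; your extra remark that deletion of $V(\p)$ commutes with the shift by $\lam I$ is the same bookkeeping the paper leaves implicit.
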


\bpf
Let $A \in \s (G)$ and let $\lam$ be an eigenvalue of $A$.
 By Theorem \ref{nullity} applied to the matrix $B=A-\lambda I$, we 
  conclude \[\nul (B(V(\p))) \geq \nul(B)-t.\] 
 and hence
\[\mult_{A(V(P))}{(\lam})= \nul (B(V(\p))) \geq \nul (B)-t=\mult_{A}({\lam})-t.  \qedhere \]

\epf

Let $G$ be a graph and $A\in \mathcal{S}(G)$.  We use  $m_i(A)$ to denote the $i$th largest multiplicity of an eigenvalue of $A$ for $i=1,\dots,q$, where $q=q(A)$ is the number of distinct eigenvalues of $A$.  Observe that $m_1(A)$ is the maximum multiplicity of an eigenvalue of $A$, denoted by $\M(A)$.  Similarly, let $q_i(A)$ denote the number of distinct eigenvalues of multiplicity at least $i$ for $i=1,\dots,\M(A)$.  Thus $q_1(A)=q(A)$, and the partitions of $|V(G)|$ given by $q_i(A)$ and $m_i(A)$ are conjugate. 
Define $q(G)=\min\{q(A):A\in\s(G)\}$.

\begin{defn}
The {\em rigid linkage number}, denoted by $\rl_G(t)$ (or $\rl(t)$ if $G$ is clear), is  the maximum number of vertices in an order $t$ rigid linkage of $G$.
\end{defn}

\begin{thm}\label{thm:sharpG}
Let $G$ be a graph, $A\in\s(G)$, $q_j=q_j(A)$, and $t$ be a positive integer.  Then
\begin{itemize}
\item[\rm (i)]
 $\sum_{j=1}^t q_j\geq \rl(t)$ and  
 \item[\rm (ii)]  $q(G)\geq \lc \frac{\rl(t)}{t}\rc$.
\end{itemize}
\end{thm}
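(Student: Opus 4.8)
The plan is to fix an arbitrary $A\in\s(G)$ together with an $(\alpha,\beta)$-rigid linkage $\p$ of order $t$ in $G$ for which $|V(\p)|=\rl(t)$ (such a linkage exists by the definition of $\rl(t)$). The one substantive ingredient is Corollary \ref{multiplicity}, which gives, for every eigenvalue $\lam$ of $A$,
\[\mult_{A(V(\p))}(\lam)\ge \mult_A(\lam)-t;\]
since multiplicities are nonnegative this upgrades to $\mult_{A(V(\p))}(\lam)\ge\max\{\mult_A(\lam)-t,0\}$. Everything else is bookkeeping with conjugate partitions.

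For part (i), write $n=|V(G)|$ and recall, as noted just before the theorem, that $(q_i(A))$ and $(m_i(A))$ are conjugate partitions of $n$, so $\sum_{j\ge1}q_j=n$ and hence $\sum_{j=1}^t q_j=n-\sum_{j>t}q_j$. I would then rewrite the tail sum by counting over distinct eigenvalues: since $q_j$ is the number of distinct eigenvalues of $A$ of multiplicity at least $j$,
\[\sum_{j>t}q_j=\sum_{\lam}\#\{\,j>t:\mult_A(\lam)\ge j\,\}=\sum_{\lam}\max\{\mult_A(\lam)-t,0\},\]
where the outer sum is over the distinct eigenvalues $\lam$ of $A$. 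By the inequality above, each summand is at most $\mult_{A(V(\p))}(\lam)$, and summing these over the distinct eigenvalues of $A$ gives at most the order of $A(V(\p))$, namely $n-|V(\p)|=n-\rl(t)$. Thus $\sum_{j>t}q_j\le n-\rl(t)$, and therefore $\sum_{j=1}^t q_j=n-\sum_{j>t}q_j\ge\rl(t)$, which is (i).

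For part (ii), note that $j\mapsto q_j(A)$ is non-increasing, so $q_j\le q_1=q(A)$ for $1\le j\le t$; combining with (i) yields $\rl(t)\le\sum_{j=1}^t q_j\le t\,q(A)$, hence $q(A)\ge\rl(t)/t$, and since $q(A)$ is an integer, $q(A)\ge\lceil\rl(t)/t\rceil$. As $A\in\s(G)$ was arbitrary, taking the minimum over $A$ gives $q(G)\ge\lceil\rl(t)/t\rceil$.

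The only point requiring care is the tail-counting step in (i): one must keep the outer sum indexed by the \emph{distinct eigenvalues of $A$} throughout, verify that $\sum_{\lam}\max\{\mult_A(\lam)-t,0\}$ is exactly $\sum_{j>t}q_j$, and check that replacing each term by $\mult_{A(V(\p))}(\lam)$ and then bounding by the order $n-\rl(t)$ of $A(V(\p))$ neither double counts nor omits anything. Once that identity and bound are set up, both inequalities follow immediately from Corollary \ref{multiplicity} and the conjugate-partition identity, so I do not anticipate any genuine difficulty.
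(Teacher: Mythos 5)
Your proposal is correct and follows essentially the same route as the paper: both hinge on Corollary \ref{multiplicity} applied to a rigid linkage of order $t$ with $|V(\p)|=\rl(t)$, use the conjugate-partition identity $\sum_{j>t}q_j=\sum_{\lam}\max\{\mult_A(\lam)-t,0\}$, bound that tail by the order $n-\rl(t)$ of $A(V(\p))$, and deduce (ii) from (i) via monotonicity of $q_j$ and integrality. The only difference is cosmetic bookkeeping in how the tail sum is organized, so there is nothing further to add.
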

\begin{proof}
Let $\mu_1, \ldots, \mu_{q_1}$ be the distinct eigenvalues of $A$ with $\mult_{A}(\mu_j)=m_j(A)$.
Choose a rigid linkage $\p$ of order $t$ such that $|V(\p)|=\rl(t)$.   
By Corollary \ref{multiplicity},
\[
\mult_{A(V(\p))}({\mu_j}) \geq m_j(A)-t.
\]
 Since the sum of the multiplicities of all eigenvalues of $A(V(\p))$ is $n-\rl(t)$,
\begin{equation}
\label{one}
 n-\rl(t)\ge  \sum_{j=1}^{q_1} \max\{m_j(A)-t,0\}.
 \end{equation}
It is easy to see from the conjugacy of the partitions that 
\begin{equation}
\label{two} \sum_{j=1}^{q_1} \max\{m_j(A)-t,0\}=q_{t+1}+\cdots+q_{\M(A)},
\end{equation}
 and since $\sum_{j=1}^{\M(A)} q_j=n$, inequality (i) follows from  (\ref{one}) and (\ref{two}).

Inequality (ii) follows from  (i) by letting $A$ range over $\s (G)$ and 
 by noting that for each $A$ the sequence of $q_j$ is non-increasing, so $q_1t\geq q_1+q_2+\cdots +q_t\geq \rl(t)$.
\end{proof}

In the next example we show that the inequality in Theorem \ref{thm:sharpG} (i) is tight.   We exploit the fact that in a tree every linkage is rigid.
\begin{ex}\label{ex:whirl}

Consider the tree   $W$ shown in Figure \ref{fig:whirl}, introduced in \cite{BF} (where it is called $T_2$) and described as a generalized 3-whirl  in \cite{KS}. The center vertex of $W$ is $v_0$ and the 
 paths  $(i_{k2},i_{k1},v_k)$ ($k=1,2,3)$,  $(j_{k2},j_{k1},v_k)$ $(k=1,2)$,   and $(j_{31},v_3)$ are called {\it legs}.  
The rigid linkage numbers $\rl(t)$ are as follows.
	\begin{itemize}
		\item For $t=1$,  $\p_1=\lbrace(i_{12}, i_{11}, v_1, v_0, v_3, i_{31}, i_{32})\rbrace$ is a rigid linkage and is the longest path in $W$. Thus, $\rl(1)=7$.
		\item For $t=2$, $\p_2= \p_1 \cup \lbrace(i_{22}, i_{21}, v_2, j_{21}, j_{22})\rbrace$ is a rigid linkage with $|\p_2|=12$, and this is the largest linkage with two paths. To see this, note that only one path may pass through the center vertex, and if one path contains $v_0$, the longest second path connects two adjacent legs. If neither path in an order 2 linkage contains $v_0$, then the largest number of vertices it can contain  is 10.  Thus, $\rl(2)=12$. 
		\item For $t=3$, the linkage $\p_3= \p_2 \cup \lbrace(j_{12},j_{11})\rbrace$ is a rigid linkage with 14 vertices. No rigid linkage with three paths  will span $W$, since removing a path that includes the center vertex splits the remaining graph  into at least  3 components, which are impossible to cover with 2 paths. Thus, $\rl(3)=14$. 
		\item For $t=4$, the rigid linkage $\p_4=\p_3 \cup \lbrace j_{31} \rbrace$ is a linkage of order 4 that spans the graph. Thus, $\rl(4)=15=|V(W)|$. 
	\end{itemize}
In \cite{BF}, Barioli and Fallat  exhibit a matrix in $\s (W)$ with $q_1=7$, $q_2=5$, $q_3=2$ and $q_4=1$. This matrix results in equality in (i) in Theorem \ref{thm:sharpG} for every $t=1,\dots,4$:
\bea
q_1 =  \phantom{..}7=\rl(1)\phantom{.}\\
	q_1+q_2=12=\rl(2)\phantom{.}\\
	q_1+q_2+q_3=14=\rl(3)\phantom{.} \\
	q_1+q_2+q_3+q_4=15=\rl(4)\smash{.}
\eea
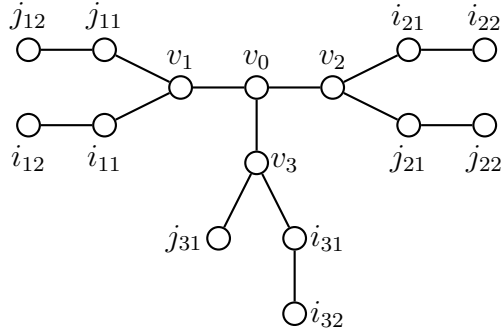
\begin{figure}[h]\vspace{-10pt}
	\begin{center}
		\scalebox{1}{
			\begin{tikzpicture}
			\newcommand*{\spacing}{1}
			\node[label={[label distance=-5pt]$v_0$}] (v0) at (0,0) {};
			\node[label={[label distance=-5pt]$v_2$}] (v2) at (\spacing,0) {};
			\node[label={[label distance=-5pt]$v_1$}] (v1) at (-\spacing,0) {};
			\node[label={[label distance=-5pt]0:$v_3$}] (v3) at (0,-\spacing) {};
			\foreach \k/\dist in {1/1,2/2}{
				\pgfmathsetmacro{\offset}{\dist+\spacing}
				\node[label={[label distance=-5pt]$j_{1\k}$}] (j1\k) at (-\offset,.5) {};
				\node[label={[label distance=-5pt]270:$i_{1\k}$}] (i1\k) at (-\offset,-.5) {};
				\node[label={[label distance=-5pt]$i_{2\k}$}] (i2\k) at (\offset,.5) {};
				\node[label={[label distance=-5pt]270:$j_{2\k}$}] (j2\k) at (\offset,-.5) {};
				\node[label={[label distance=-5pt]0:$i_{3\k}$}] (i3\k) at (.5,-\offset) {};			}
			\pgfmathsetmacro{\secondoffset}{1+\spacing}
			\node[label={[label distance=-5pt]180:$j_{31}$}] (j31) at (-.5,-\secondoffset) {};
			
			\draw (v1)--(v0)--(v2);
			\draw (v0)--(v3);
			\draw (v1)--(j11)--(j12);
			\draw (v1)--(i11)--(i12);
			\draw (v2)--(j21)--(j22);
			\draw (v2)--(i21)--(i22);
			\draw (v3)--(i31)--(i32);
			\draw (v3)--(j31);
			\end{tikzpicture}}\vspace{-15pt}
	\end{center}
	\caption{The tree $W$ that illustrates simultaneous equalities  in (i) of Theorem \ref{thm:sharpG}.\label{fig:whirl}}
\end{figure}
\end{ex}

\begin{cor}
\label{cor:tightRL}
Let $G$ be a graph and $A\in\s(G)$.  Let $q_i=q_i(A)$.  If $\rl_G(t)=\sum_{i=1}^tq_i$ and $\p$ is a rigid linkage of order $t$ with $|V(\p)|=\rl_G(t)$, then 
\[\spec(A(V(\p))=\{\lambda^{(m_\lambda-t)}: \lambda\in\spec(A)\text{ with }m_\lambda>t\},\]
where $m_\lambda$ is shorthand for  $\mult_A(\lam)$. \end{cor}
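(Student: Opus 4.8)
\textbf{Proof plan for Corollary \ref{cor:tightRL}.}

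The plan is to combine the multiplicity inequality from Corollary \ref{multiplicity} with a counting argument that forces all the inequalities to be equalities. First I would set $B = A(V(\p))$ and note that $B$ is a symmetric matrix of order $n - t$ where $n = |V(G)|$, so $\sum_{\lambda} \mult_B(\lambda) = n - t$ (summing over all eigenvalues $\lambda$ of $B$). By Corollary \ref{multiplicity}, for every eigenvalue $\lambda$ of $A$ we have $\mult_B(\lambda) \geq \mult_A(\lambda) - t = m_\lambda - t$; in particular $\mult_B(\lambda) \geq m_\lambda - t$ whenever $m_\lambda > t$, and trivially $\mult_B(\lambda) \geq 0 \geq m_\lambda - t$ when $m_\lambda \leq t$. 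Summing the nonnegative lower bounds over the distinct eigenvalues of $A$ gives
\[
n - t = \sum_{\lambda \in \spec(A)} \mult_B(\lambda) \geq \sum_{\lambda \in \spec(A)} \max\{m_\lambda - t, 0\} = \sum_{j=t+1}^{\M(A)} q_j,
\]
where the last equality is exactly equation (\ref{two}) from the proof of Theorem \ref{thm:sharpG}.

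Next I would use the hypothesis $\rl_G(t) = \sum_{i=1}^t q_i$ together with $|V(\p)| = \rl_G(t)$ and the identity $\sum_{j=1}^{\M(A)} q_j = n$ to evaluate the right-hand side: $\sum_{j=t+1}^{\M(A)} q_j = n - \sum_{j=1}^t q_j = n - \rl_G(t) = n - |V(\p)| = n - t$ wait --- here I need to be careful, since $|V(\p)| = \rl_G(t)$ need not equal $t$; rather $B$ has order $n - |V(\p)| = n - \rl_G(t)$. So the correct bookkeeping is: $\sum_\lambda \mult_B(\lambda) = n - \rl_G(t) = n - \sum_{i=1}^t q_i = \sum_{j=t+1}^{\M(A)} q_j$. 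Thus the displayed inequality above is in fact an equality, which means $\mult_B(\lambda) = \max\{m_\lambda - t, 0\}$ must hold for \emph{every} distinct eigenvalue $\lambda$ of $A$ individually (a sum of terms each at least its claimed value, with equal totals, forces term-by-term equality). In particular $\mult_B(\lambda) = m_\lambda - t$ when $m_\lambda > t$ and $\mult_B(\lambda) = 0$ when $m_\lambda \leq t$.

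Finally I would observe that the eigenvalues of $B$ with positive multiplicity are exactly those $\lambda \in \spec(A)$ with $m_\lambda > t$ (no other real number can be an eigenvalue of $B$, since by interlacing every eigenvalue of $B = A(V(\p))$ is an eigenvalue of $A$ --- this uses Cauchy interlacing, or alternatively follows since $\mult_B$ accounts for all $n - \rl_G(t)$ eigenvalues of $B$ already among eigenvalues of $A$); more carefully, the equality $\sum_{\lambda \in \spec(A)} \mult_B(\lambda) = n - \rl_G(t)$ already exhausts the order of $B$, so $B$ has no eigenvalues outside $\spec(A)$. Therefore $\spec(A(V(\p))) = \{\lambda^{(m_\lambda - t)} : \lambda \in \spec(A),\ m_\lambda > t\}$, as claimed. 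The main subtlety to get right is the bookkeeping distinction between $t$ (the order of the linkage) and $\rl_G(t) = |V(\p)|$ (the number of vertices removed), and making explicit that equality of the two sums forces equality in each summand; the interlacing remark is standard and only needed to rule out spurious eigenvalues of $B$, though the counting argument alone already does this.
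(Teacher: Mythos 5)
Your argument is correct and essentially the paper's own proof: apply Corollary \ref{multiplicity} to get the multiset containment $\spec(A(V(\p)))\supseteq\{\lambda^{(m_\lambda-t)}:m_\lambda>t\}$, then count the order of $A(V(\p))$, namely $|V(G)|-\rl_G(t)=|V(G)|-\sum_{i=1}^t q_i=\sum_{m_\lambda>t}(m_\lambda-t)$, to force equality term by term. One small caution: Cauchy interlacing does not say that every eigenvalue of a principal submatrix is an eigenvalue of the full matrix, but as you yourself note, the counting argument alone already excludes spurious eigenvalues, so the proof stands without that remark.
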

\begin{proof}
From Corollary \ref{multiplicity}, $\lambda$ is an eigenvalue of $A(V(\p))$ with multiplicity at least $m_\lambda-t$.
 Thus, 
\[\spec(A(V(\p)))\supseteq\{\lambda^{(m_\lambda-t)}: \lambda\in\spec(A)\text{ with }m_\lambda>t\}.\]
The order of $A(V(\p))$ is 
\[|V(G)|-\rl_G(t)=|V(G)|-\sum_{i=1}^t q_i=\sum_{i=t+1}^{\M(G)}q_i=\sum_{\substack{\lambda\in\spec(A)\\m_\lambda>t}}(m_\lambda-t).\]
Therefore  $\spec(A)$ is exactly the described spectrum.
\end{proof}

\section{An application of rigid linkages}\label{app}

The Barioli-Fallat tree $G$ is a graph obtained by appending two leaves to each existing leaf of  $K_{1,3}$.  It is known \cite{Hog05} that if $A\in\s(G)$ with 
$\spec(A)=\{\lambda_{1},\lambda_{2}^{(2)},\lambda_3^{(4)},\lambda_4^{(2)},\lambda_5\}$,
then $\lambda_{2}+\lambda_4=\lambda_{1}+\lambda_5$.  Restrictions of this type were first discovered by Barioli and Fallat \cite{BF}. 
In this section, we give an  infinite family of trees $T_k$  ($k\geq 3$) 
and use rigid linkages to  show that 
the eigenvalues of each $A\in \s(T_k)$ achieving a specific multiplicity list satisfy a linear relationship. 

Let $k$ be a positive integer. The graph $T_k$ shown in Figure \ref{fig:T_k} is the tree obtained from $K_{1,3}$  and $3k$ copies of $K_{1,3}$  by identifying each pendant vertex  of the initial $K_{1,3}$
 with a pendant vertex of $k$ of the copies of $K_{1,3}$.  
Thus $T_k$ has $9k+4$ vertices.  For convenience, we label the central vertex by $0$, 
and its neighbors by $1$, $2$, $3$.  For $\ell=0,1,2,3$ we say that a vertex is in \emph{level $\ell$} if the distance between the vertex and $0$ is $\ell$. Thus $0$ is the only vertex at level $0$; 1,2,3 are the vertices at level $1$; there are $3k$ vertices at level $2$; and 
the $6k$ pendant vertices are the vertices at level $3$.

We first show that  for $k\geq 2$ there is a matrix in $S(T_k)$  whose multiplicity list is
$3k+2, 3k-2, 3k-3, 2,2,1,1,1$.  Throughout this section  the graph $H_k$ shown in Figure \ref{fig:T_k}   is graph obtained by identifying one pendant vertex for each of  $k$ copies of $K_{1,3}$
at a common vertex. Thus, each branch of $T_k$ at $0$ is an $H_k$.

\begin{figure}[!htbp] 
\begin{center}
\begin{tikzpicture}
\node (0) at (0,0) {};
\foreach \i in {0,1,2}{
\pgfmathsetmacro{\ang}{120*\i}
\begin{scope}[rotate=\ang,yshift=-0.6cm]
\node (\i-b) at (0,0) {};
\node (\i-l) at (-1.5,-1) {};
\node (\i-r) at (1.5,-1) {};
\node (\i-ll) at (-1.8,-1.5) {};
\node (\i-lr) at (-1.2,-1.5) {};
\node (\i-rl) at (1.2,-1.5) {};
\node (\i-rr) at (1.8,-1.5) {};
\draw (\i-l) -- (\i-b) -- (\i-r);
\draw (\i-ll) -- (\i-l) -- (\i-lr);
\draw (\i-rl) -- (\i-r) -- (\i-rr);
\draw (\i-b) -- (-0.4,-0.8);
\draw (\i-b) -- (0.4,-0.8);
\draw[dotted] ([xshift=0.5cm]\i-l.center) --node[midway,rectangle,draw=none,fill=white]{$k$} ([xshift=-0.5cm]\i-r.center);
\draw (0) -- (\i-b);
\end{scope}
}
\node[rectangle,draw=none] at (0,-2.6) {$T_k$};
\end{tikzpicture}
\hfil
\begin{tikzpicture}
\node (b) at (0,0) {};
\node (l) at (-1.5,-1) {};
\node (r) at (1.5,-1) {};
\node (ll) at (-1.8,-1.5) {};
\node (lr) at (-1.2,-1.5) {};
\node (rl) at (1.2,-1.5) {};
\node (rr) at (1.8,-1.5) {};
\draw (l) -- (b) -- (r);
\draw (ll) -- (l) -- (lr);
\draw (rl) -- (r) -- (rr);
\draw (b) -- (-0.4,-0.8);
\draw (b) -- (0.4,-0.8);
\draw[dotted] ([xshift=0.5cm]l.center) --node[midway,rectangle,draw=none,fill=white]{$k$} ([xshift=-0.5cm]r.center);
\node[rectangle,draw=none] at (0,-2) {$H_k$};
\end{tikzpicture}
\end{center}
\caption{The graph $T_k$ and the graph $H_k$}
\label{fig:T_k}
\end{figure}
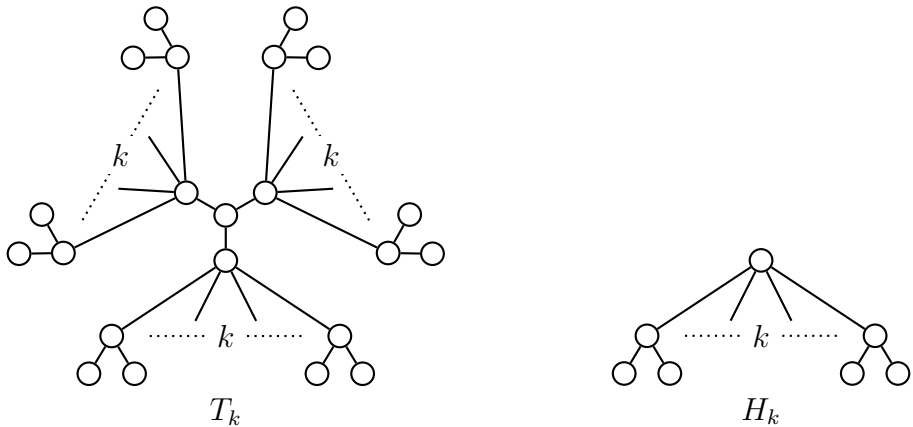

\begin{prop}
\label{specH}
Let $A$ be the adjacency matrix of $H_k$.  Then 
\[\spec(A)=\{0^{(k+1)}, \pm \sqrt{2}^{(k-1)}, \pm \sqrt{k+2} \}.\]
\end{prop}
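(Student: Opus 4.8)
The plan is to write down an explicit eigenbasis of $A$ and read off the multiplicities by a dimension count. Label $V(H_k)$ so that $b$ is the common vertex, $c_1,\dots,c_k$ are the centres of the $k$ copies of $K_{1,3}$ (so each $c_i$ is adjacent to $b$), and $\ell_{i,1},\ell_{i,2}$ are the two remaining leaves of the $i$th copy (so each $\ell_{i,j}$ is adjacent only to $c_i$). Then $|V(H_k)|=1+k+2k=3k+1$, which equals $(k+1)+(k-1)+(k-1)+1+1$, so it suffices to exhibit that many independent eigenvectors with the claimed eigenvalues.

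First I would exploit the twin structure: for each $i$, the vertices $\ell_{i,1}$ and $\ell_{i,2}$ are twins (nonadjacent with identical neighbourhoods), so $e_{\ell_{i,1}}-e_{\ell_{i,2}}\in\ker A$. These $k$ vectors have pairwise disjoint supports, hence are independent, and they span the $A$-invariant subspace $U^\perp$ of vectors that vanish on $b$ and on all $c_i$ and are odd on each twin pair (this is literally the orthogonal complement of the span $U$ of $e_b$, the $e_{c_i}$, and the $e_{\ell_{i,1}}+e_{\ell_{i,2}}$). Consequently $A$ restricts to $0$ on $U^\perp$, contributing a $k$-dimensional piece to the eigenvalue $0$.

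Next I would analyze $A$ on the complementary $A$-invariant subspace $U$ of vectors that are constant on each twin pair; it has dimension $2k+1$, with coordinates $x_b$, $x_{c_i}$, and $y_i$ (the common value on $\ell_{i,1},\ell_{i,2}$). There the equation $Ax=\lambda x$ reads $\lambda x_b=\sum_i x_{c_i}$, $\lambda x_{c_i}=x_b+2y_i$, and $\lambda y_i=x_{c_i}$. When $\lambda^2=2$, the last two equations force $x_b=0$ and the first then forces $\sum_i x_{c_i}=0$, with $y_i=x_{c_i}/\lambda$ otherwise free; this gives a $(k-1)$-dimensional eigenspace for each of $\pm\sqrt{2}$. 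When $\lambda^2\ne2$, the last two equations give $x_{c_i}=\lambda x_b/(\lambda^2-2)$, and summing over $i$ yields $\lambda x_b=k\lambda x_b/(\lambda^2-2)$: either $x_b=0$, which (via the three equations) forces $x=0$ unless $\lambda=0$ — the unique kernel vector in $U$, namely $2e_b-\sum_{i,j}e_{\ell_{i,j}}$ — or $\lambda^2-2=k$, giving the simple eigenvalues $\pm\sqrt{k+2}$ with eigenvector $x_b=1$, $x_{c_i}=\lambda/k$, $y_i=1/k$. Hence on $U$ the spectrum is $\{0,\ \pm\sqrt{2}^{(k-1)},\ \pm\sqrt{k+2}\}$.

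Combining the two pieces gives $\{0^{(k+1)},\ \pm\sqrt{2}^{(k-1)},\ \pm\sqrt{k+2}\}$, which accounts for all $3k+1$ eigenvalues. I do not expect any real obstacle: the only steps that need care are the case $\lambda^2=2$ in the reduced system, where one must notice that the equations at $c_i$ and $\ell_{i,j}$ force $x_b=0$ and hence yield genuine $(k-1)$-dimensional eigenspaces rather than isolated eigenvalues, and the routine verification that the twin differences really do span $U^\perp$ and that $U\oplus U^\perp=\mathbb{R}^{3k+1}$. One could alternatively obtain the characteristic polynomial directly via an equitable partition of $H_k$, but the eigenvector computation above is self-contained.
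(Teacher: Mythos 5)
Your proof is correct, but it takes a genuinely different route from the paper's. You decompose $\mathbb{R}^{3k+1}$ into the span $U$ of vectors constant on each twin pair and its complement $U^\perp$ spanned by the twin differences, then solve the reduced $3$-equation quotient system explicitly; this produces an explicit eigenbasis and pins down all multiplicities exactly in one pass. The paper instead argues by eigenvalue interlacing: deleting the $k$ neighbors of the common vertex leaves the zero matrix $O_{2k+1}$, giving $\mult_A(0)\ge k+1$; deleting the common vertex leaves $k$ disjoint copies of $P_3$, giving $\mult_A(\pm\sqrt{2})\ge k-1$; this accounts for all but two eigenvalues, which by bipartiteness are $\pm\mu$, and the identity $\tr(A^2)=2|E(H_k)|=6k$ forces $\mu^2=k+2$. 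The trade-off: the paper's argument is shorter and needs no eigenvectors, but it only gives lower bounds until the final counting/bipartite/trace step closes the gap, whereas your twin-pair (equitable-partition) computation is self-contained, exhibits the eigenvectors, and never needs bipartiteness or the trace of $A^2$. The one place your write-up rightly flags as needing care — that $\lambda^2=2$ forces $x_b=0$ and hence a $(k-1)$-dimensional eigenspace, and that the twin differences really span $U^\perp$ with $U\oplus U^\perp=\mathbb{R}^{3k+1}$ — checks out, so there is no gap.
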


\bpf
The principal submatrix of $A$ obtained by deleting each vertex adjacent to the central vertex in $H_k$ is 
$O_{2k+1}$.  By eigenvalue interlacing, $0$ is an eigenvalue of  $A$ having multiplicity at least $2k+1-k=k+1$.

The principal submatrix of $A$ obtained by deleting the central vertex is a direct sum of 
$k$ copies of  the adjacency matrix of $P_3$.  Hence $\pm \sqrt{2}$  are eigenvalues of this submatrix with multiplicity $k$.  By eigenvalue 
interlacing,  both $\pm \sqrt{2}$
are eigenvalues of $A$ of multiplicity at least $k-1$.  

The above accounts for all but two of  eigenvalues of $A$.  Since $H_k$ is bipartite, these two eigenvalues
are opposites.  Using the fact that the sum of the squares of the eigenvalues of an adjacency matrix of 
a graph is equal to twice the number of edges, it readily follows that the spectrum is as claimed. 
\epf

\begin{prop}

Let $B$ be the adjacency matrix of $T_k$ with $k\geq 2$, and let $E$ be the matrix obtained from $B$
by replacing its $(1,1)$-entry by $\sqrt{2}$. Then 
$E$  has multiplicity list $3k+2, 3k-2, 3k-3, 2,2,1,1,1$.
\end{prop}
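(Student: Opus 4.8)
The plan is to diagonalize $E$ using the symmetry of $T_k$ coming from the three isomorphic branches at the center vertex $v_0$; here $E$ is the adjacency matrix $B$ of $T_k$ with its $(v_0,v_0)$-entry changed to $\sqrt2$. Order the vertices with $v_0$ first, then the three copies $H^{(1)},H^{(2)},H^{(3)}$ of $H_k$, each carrying $A:=A_{H_k}$; inside one copy write $e_c$ for the indicator vector of its common vertex (the neighbor of $v_0$). Then $\mathbb{R}^{9k+4}$ splits as the orthogonal direct sum of two $E$-invariant subspaces: $\mathcal T=\{(x_0,w,w,w)\}$ (dimension $3k+2$) and $\mathcal S=\{(0,w_1,w_2,w_3):w_1+w_2+w_3=0\}$ (dimension $2(3k+1)$).

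On $\mathcal S$ the center coordinate stays $0$ and each branch evolves under $A$, so $E|_{\mathcal S}$ has the same eigenvalues as $A\oplus A$; by Proposition \ref{specH} this contributes $0^{(2k+2)},\ \pm\sqrt2^{(2k-2)},\ \pm\sqrt{k+2}^{(2)}$ to $\spec(E)$. On $\mathcal T$, after the orthonormal rescaling $w\mapsto\sqrt3\,w$, $E$ acts as $N=\left[\begin{smallmatrix}\sqrt2 & \sqrt3\,e_c^\top\\ \sqrt3\,e_c & A\end{smallmatrix}\right]$ on $\mathbb{R}^{3k+2}$. To analyze $N$, let $U=\operatorname{span}\{e_c,Ae_c,A^2e_c\}$; writing $m_1,\dots,m_k$ for the star centers of $H_k$ and $p_i,q_i$ for the leaves at $m_i$, one has $Ae_c=\sum_i e_{m_i}$ and $A^2e_c=k\,e_c+\sum_i(e_{p_i}+e_{q_i})$, so $U$ is $3$-dimensional and $A$-invariant, and in the orthonormal basis $e_c,\ \tfrac1{\sqrt k}\sum e_{m_i},\ \tfrac1{\sqrt{2k}}\sum(e_{p_i}+e_{q_i})$ one gets $A|_U=M:=\left[\begin{smallmatrix}0&\sqrt k&0\\ \sqrt k&0&\sqrt2\\ 0&\sqrt2&0\end{smallmatrix}\right]$ with $\spec(M)=\{0,\pm\sqrt{k+2}\}$. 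Since $e_c\perp U^\perp$ and $U^\perp$ is $A$-invariant, $U^\perp$ is $N$-invariant and decoupled from the center coordinate, with $N|_{U^\perp}=A|_{U^\perp}$ of spectrum $\spec(A)\setminus\{0,\pm\sqrt{k+2}\}=0^{(k)},\ \pm\sqrt2^{(k-1)}$; the complementary invariant summand $\mathbb R\oplus U$ gives the $4\times4$ tridiagonal matrix $\hat N=\left[\begin{smallmatrix}\sqrt2&\sqrt3&0&0\\ \sqrt3&0&\sqrt k&0\\ 0&\sqrt k&0&\sqrt2\\ 0&0&\sqrt2&0\end{smallmatrix}\right]$.

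The standard three-term recursion for $\det(\lambda I-\hat N)$ gives $\phi_{\hat N}(\lambda)=\lambda^4-\sqrt2\,\lambda^3-(k+5)\lambda^2+(k+2)\sqrt2\,\lambda+6=(\lambda-\sqrt2)\,r(\lambda)$ with $r(\lambda)=\lambda^3-(k+5)\lambda-3\sqrt2$. Thus $\sqrt2$ is a simple eigenvalue of $\hat N$ (indeed $r(\sqrt2)=-(k+6)\sqrt2\ne0$); and the discriminant of $r$ is $4(k+5)^3-486>0$ for $k\ge2$, so $r$ has three distinct real roots $\nu_1,\nu_2,\nu_3$, none of which equals $0$, $\pm\sqrt2$, or $\pm\sqrt{k+2}$ (one-line substitutions, using $k\ge2$). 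Summing the three contributions — $\mathcal S$, then $A|_{U^\perp}$, then $\{\sqrt2,\nu_1,\nu_2,\nu_3\}$ — yields $\mult_E(0)=2(k+1)+k=3k+2$, $\mult_E(\sqrt2)=2(k-1)+(k-1)+1=3k-2$, $\mult_E(-\sqrt2)=2(k-1)+(k-1)=3k-3$, $\mult_E(\sqrt{k+2})=\mult_E(-\sqrt{k+2})=2$, and $\mult_E(\nu_i)=1$; that is, $E$ has eigenvalue multiplicity list $3k+2,\,3k-2,\,3k-3,\,2,\,2,\,1,\,1,\,1$.

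The step I expect to be the main obstacle is the analysis of $N$: establishing precisely that $U$ is $3$-dimensional with $A|_U=M$ and that $A|_{U^\perp}$ has spectrum $0^{(k)},\pm\sqrt2^{(k-1)}$ — essentially re-deriving the eigenstructure behind Proposition \ref{specH} in the quotient form needed here — and then tracking the eigenvalue $\sqrt2$, which occurs both inside $A|_{U^\perp}$ and as the factor $(\lambda-\sqrt2)$ of $\phi_{\hat N}$, without gaining or losing a multiplicity. Verifying that $\nu_1,\nu_2,\nu_3$ are genuinely new eigenvalues (so $E$ has exactly eight distinct eigenvalues, and $k\ge2$ is needed only to keep $3k-3\ge2$ and the roots separated) is the other spot requiring care, but it reduces to the discriminant and substitution checks above.
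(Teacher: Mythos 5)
Your proof is correct, but it takes a genuinely different route from the paper. You diagonalize $E$ outright: the $S_3$ symmetry of the three branches splits $\mathbb{R}^{9k+4}$ into the "antisymmetric" part $\mathcal S$, on which $E$ is similar to $A\oplus A$ (note this similarity is not an isometry, but it preserves multiplicities, which is all you need), and the symmetric part, which you compress to $N$ and then further reduce via the $A$-invariant Krylov subspace $U=\operatorname{span}\{e_c,Ae_c,A^2e_c\}$ (the identity $A^3e_c=(k+2)Ae_c$ makes the invariance clear), ending with the $4\times 4$ Jacobi matrix $\hat N$ whose characteristic polynomial factors as $(\lambda-\sqrt2)\bigl(\lambda^3-(k+5)\lambda-3\sqrt2\bigr)$; I checked the matrix $M$, the polynomial, the factorization, the discriminant $4(k+5)^3-486>0$, and the exclusion of $0,\pm\sqrt2,\pm\sqrt{k+2}$ as roots of the cubic, and the multiplicity bookkeeping all adds up to $3k+2,3k-2,3k-3,2,2,1,1,1$ on $9k+4$ vertices. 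The paper instead never computes the full spectrum: it sandwiches each multiplicity using eigenvalue interlacing (deleting the level-$\{0,2\}$ and level-$1$ vertex sets, and the vertex $0$), the fact that $\M(T_k)$ equals the path cover number $3k+2$, Proposition \ref{specH}, and --- crucially for the theme of that section --- Corollary \ref{multiplicity} applied to linkages of one, two, and three maximal paths to get the upper bounds $\mult_E(-\sqrt2)\le 3k-3$, $\mult_E(\pm\sqrt{k+2})\le 2$, and simplicity of the remaining eigenvalues. What each approach buys: the paper's argument is computation-light and showcases the rigid-linkage multiplicity bound that the section is introducing; yours is self-contained linear algebra that yields strictly more information (the three simple eigenvalues explicitly as roots of $\lambda^3-(k+5)\lambda-3\sqrt2$, plus the extra eigenvalue $\sqrt2$ from the symmetric part), at the cost of not exercising the paper's new machinery. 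One small remark: you read the "$(1,1)$-entry" as the diagonal entry of the central vertex $v_0$; despite the paper's labeling of the center as $0$, this is exactly what the paper's own proof assumes (its computation of $\spec(E(\beta))$ requires the perturbed diagonal to sit at the isolated center), so there is no discrepancy.
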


\bpf
 First let $\alpha$ be the set of vertices at level 0 or level 2.
Then $|\alpha |= 3k+1$, and $E(\alpha)=O_{6k+3}$.
It follows from eigenvalue interlacing that $\mult_E(0) \geq 6k+3-(3k+1)=3k+2$. 
As the path cover number of $T_k$ is $3k+2$, $\M(T_k) = 3k+2$, and hence $\mult_E(0)= 3k+2$.

Next let $\beta$ be the set of vertices at level $1$. 
Then $\sqrt{2}$ (respectively, $-\sqrt{2}$) is an  eigenvalue of $E(\beta)$  with multiplicity $3k+1$
(respectively, $3k$). It follows  from eigenvalue interlacing that 
$\mult_E( \sqrt{2}) \geq 3k+1-3=3k-2$, and $\mult_E(-\sqrt{2}) \geq 3k-3$.
 By Proposition  \ref{specH}, the  multiplicity of $\pm \sqrt{2}$ as an eigenvalue of $E(0)$ is $3k-3$.  Thus $\mult_E(\sqrt{2})=3k-2$.
Deleting the vertices of the linkage consisting of  three disjoint maximal paths of length $4$
gives a  principal submatrix of $E$ having $-\sqrt{2}$ as an eigenvalue of multiplicity $3(k-2)$.  
Hence, by Corollary \ref{multiplicity}, $\mult_E(-\sqrt{2})\leq 3(k-2)+3=3k-3$.  Therefore, we conclude
that $\mult_E(-\sqrt{2})=3k-3$.

Note that $\pm \sqrt{k+2}$ are eigenvalues of multiplicity 3 of $E(0)$, and are not eigenvalues of 
the submatrix obtained by deleting the vertices of the linkage consisting of the   disjoint union of a  maximal path of
length 6 and a maximal path of length 4. Hence, by Corollary \ref{multiplicity},  $\mult_E(\pm \sqrt{k+2})=2$.

Each eigenvalue of the  principal submatrix of $E$ obtained by deleting
the vertices of  
a maximal path of length $6$ belongs to the set $\{0, \pm \sqrt{2}, \pm \sqrt{k+2} \}$.
Hence, by Corollary \ref{multiplicity}  each of the remaining eigenvalues of $E$ is simple.  The result
now follows by noting that the sum of the multiplicities for $E$ equals $9k+4
=3k+2+3k-2+3k-3 +2+2+1+1+1$.
\epf
\begin{thm}
Let $k \geq 3$, and suppose that $B \in \s(T_k)$ has spectrum 
\[ 
\{ \lambda_1^{(3k+2)}, \lambda_2^{(3k-2)}, \lambda_3^{(3k-3)}, 
\lambda_4^{(2)}, \lambda_5^{(2)}, 
\lambda_6^{(1)}, \lambda_7^{(1)}, \lambda_9^{(1)} \}.
\]
Then 
\[
\lambda_1+3\lambda_2+3\lambda_3= 2\lambda_4+2\lambda_5+\lambda_6 
+ \lambda_7 + \lambda_8.\]\end{thm}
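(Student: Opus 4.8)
The plan is to cut $T_k$ at its centre $0$ and reduce the problem to the three branches. Write $1,2,3$ for the neighbours of $0$; each branch of $T_k-0$ is a copy of $H_k$, and let $B_1,B_2,B_3\in\s(H_k)$ be the corresponding principal submatrices of $B$, so that $B(0)=B_1\oplus B_2\oplus B_3$. Inside a branch, the degree-$k$ vertex $j$ of $H_k$ (the ``apex'') is itself a cut vertex whose removal leaves $k$ copies of $P_3$; for each such $P_3$ — one per claw attached in a branch — let $M$ denote the $3\times 3$ principal submatrix of $B$ on its centre and two pendants. Two elementary facts drive everything. (a) Each $M$ is an irreducible symmetric tridiagonal matrix, so it has three \emph{distinct} eigenvalues, and $M$ has an eigenvector vanishing at its centre for the eigenvalue $\mu$ exactly when both pendant diagonal entries of $M$ equal $\mu$; equivalently, at every $\mu\in\spec M$ the residue of the $(\mathrm{centre},\mathrm{centre})$ entry of $(xI-M)^{-1}$ is positive, unless both pendant diagonals equal $\mu$. (b) The Schur-complement (cut-vertex) identity: for a cut vertex $v$ of a tree whose neighbours lie in distinct components, $\det(xI-A)$ equals the product of the characteristic polynomials of those components times a rational function $h_v(x)=(x-a_{vv})-\sum_{u\sim v}a_{vu}^2\,r_u(x)$, where $r_u(x)$ is the $(u,u)$ resolvent entry of the component containing $u$. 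The aim is to determine every diagonal entry of $B$ and then compute $\tr(B)$ in two ways.

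First I would prove $\mult_{B_j}(\lam_1)=k+1$ for $j=1,2,3$. Fix $j$ and let $\p$ be a longest path of $T_k$; it passes through $0$ and through one claw in each of the other two branches, has seven vertices, and is a rigid linkage of order $1$ (every linkage in a tree is rigid). Then $B(V(\p))=B_j\oplus D\oplus D'$, where $D,D'$ each have the form ``one isolated vertex $\oplus$ $(k-1)$ copies of $P_3$'', so $\mult_D(\lam_1),\mult_{D'}(\lam_1)\le k$ by (a) and $\mult_{B_j}(\lam_1)\le k+1$ by interlacing at the apex. Corollary~\ref{multiplicity} gives $\mult_{B(V(\p))}(\lam_1)\ge 3k+1$, so all three bounds are equalities and $\mult_{B_j}(\lam_1)=k+1$. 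Feeding this into (b) at the apex: $\prod_M p_M$ has $\lam_1$ as a root of order $\le k$, so $h_j$ must be analytic with a simple zero at $\lam_1$, which by (a) forces every claw matrix $M$ in branch $j$ to have $\lam_1$ as an eigenvalue with centre-vanishing eigenvector (so both its pendant diagonals equal $\lam_1$) and forces $0=h_j(\lam_1)=\lam_1-b_{jj}$. Over all $j$: $b_{jj}=\lam_1$ for $j=1,2,3$, and all $6k$ pendant diagonal entries of $B$ equal $\lam_1$.

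Next I would show that every claw matrix has $\lam_1,\lam_2,\lam_3$ as its three eigenvalues. For $\lam_2$: since each pendant diagonal equals $\lam_1\ne\lam_2$, no claw matrix carries $\lam_2$ on a centre-vanishing eigenvector, so reading the factorization in (b) at $\lam_2$ gives $\mult_{B_j}(\lam_2)\le k-1$; interlacing at $0$ gives $\sum_j\mult_{B_j}(\lam_2)\ge\mult_B(\lam_2)-1=3k-3$, so each equals $k-1$ and (using $k\ge 3$) every claw matrix has $\lam_2$ as an eigenvalue. Applying (b) at $0$, $p_B=\bigl(\prod_j p_{B_j}\bigr)H$ with $H(x)=(x-b_{00})-\sum_j b_{0j}^2/h_j(x)$; each $h_j$ now has a simple pole at $\lam_2$, so $H(\lam_2)=\lam_2-b_{00}$, and comparing the order of $\lam_2$ on the two sides with $\mult_B(\lam_2)=3k-2$ forces $H(\lam_2)=0$, i.e. $b_{00}=\lam_2$. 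The crux is $\lam_3$, where interlacing only gives $\sum_j\mult_{B_j}(\lam_3)\in\{3k-4,3k-3\}$ and the value $3k-4$ must be ruled out: if it held, a short count gives branch multiplicities $(k-1,k-1,k-2)$, and since $k\ge3$, in each branch either $\lam_3$ is an eigenvalue of some claw matrix (so $h_j$ has a pole at $\lam_3$) or that branch's $h_j$ has a simple zero at $\lam_3$; in the first case $H(\lam_3)=\lam_3-b_{00}=\lam_3-\lam_2\ne0$, in the second $H$ has a pole at $\lam_3$, so in either case the order of $\lam_3$ in $H$ is $\le0$, contradicting $\mult_B(\lam_3)=3k-3=(3k-4)+\operatorname{ord}_{\lam_3}(H)$. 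Hence $\sum_j\mult_{B_j}(\lam_3)=3k-3$, each $\mult_{B_j}(\lam_3)=k-1$, and every claw matrix has $\lam_3$ as an eigenvalue; combined with the previous step, $\spec M=\{\lam_1,\lam_2,\lam_3\}$ for every claw matrix $M$.

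Finally I would conclude by a trace count. Each claw matrix $M$ has $\tr M=\lam_1+\lam_2+\lam_3$ and two diagonal entries equal to $\lam_1$, so the diagonal entry of $B$ at that claw's centre equals $\lam_2+\lam_3-\lam_1$. Summing all $9k+4$ diagonal entries of $B$ — vertex $0$ gives $\lam_2$, the three apexes give $3\lam_1$, the $3k$ claw centres give $3k(\lam_2+\lam_3-\lam_1)$, the $6k$ pendants give $6k\lam_1$ — yields $\tr B=3(k+1)\lam_1+(3k+1)\lam_2+3k\lam_3$. Equating this with $\tr B=(3k+2)\lam_1+(3k-2)\lam_2+(3k-3)\lam_3+2\lam_4+2\lam_5+\lam_6+\lam_7+\lam_8$ and cancelling gives $\lam_1+3\lam_2+3\lam_3=2\lam_4+2\lam_5+\lam_6+\lam_7+\lam_8$. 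The hardest points are the rigid-linkage argument that delivers $\mult_{B_j}(\lam_1)=k+1$ and, above all, the exclusion of $\sum_j\mult_{B_j}(\lam_3)=3k-4$; the whole argument is powered by fact (a), which makes eigenvalue multiplicities exactly trackable through the cut vertices $0$ and $1,2,3$ despite their high degree.
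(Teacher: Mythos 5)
Your argument is correct and ends exactly where the paper ends --- with the same intermediate facts (every pendant diagonal entry and every level-$1$ diagonal entry of $B$ equals $\lam_1$, $b_{00}=\lam_2$, and every claw $P_3$ principal submatrix has spectrum $\{\lam_1,\lam_2,\lam_3\}$) followed by the identical two-way trace computation --- but it reaches those facts by a genuinely different route. The paper derives all of them from Corollary \ref{multiplicity} alone, applied to several different rigid linkages (a $P_7\du P_5$, a linkage of $3k$ $P_3$'s together with a path joining two level-$1$ vertices, and three disjoint $P_5$'s), and then varies the linkage to sweep over all pendants and all claws; in particular, the order-$3$ linkage of three $P_5$'s is what forces $\lam_2$ and $\lam_3$ into every claw and $b_{00}=\lam_2$. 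You invoke Corollary \ref{multiplicity} only once (for $\lam_1$, via a single $P_7$, to get $\mult_{B_j}(\lam_1)=k+1$), and otherwise run a classical cut-vertex/Schur-complement analysis: Cauchy interlacing at $0$ and at the apexes, plus order-and-sign bookkeeping for the rational functions $h_j$ and $H$. Your route is Parter--Wiener in spirit and makes the accounting at the high-degree vertices very explicit --- notably the exclusion of $\sum_j\mult_{B_j}(\lam_3)=3k-4$, a case the paper never has to confront because its three-$P_5$ linkage bound is stronger than interlacing at the single vertex $0$; the paper's route, in turn, stays entirely inside the rigid-linkage machinery it develops and avoids rational functions altogether.

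One point you should make explicit: you assert that $h_j$ is analytic with a \emph{simple} zero at $\lam_1$, and you again rely on simple zeros and sign-definite residues at $\lam_2$ and $\lam_3$. This needs the standard monotonicity fact that $h_j'(x)=1-\sum_c b_{jc}^2\,r_c'(x)\ge 1$ between poles (since $r_c'(x)=-\|(xI-M_c)^{-1}e_c\|^2\le 0$), which rules out zeros of order two and gives the positive residues of $1/h_j$ used in the $3k-4$ exclusion. Without it, one could imagine $h_j$ having a double zero at $\lam_1$ with some claw missing $\lam_1$, and then your conclusion that both pendant diagonals of every claw equal $\lam_1$ --- which the trace count needs --- would not follow. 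It is a one-line addition; alternatively, the equality case of your own linkage computation already yields these facts the paper's way, since varying which claw the $P_7$ passes through forces every claw to have $\lam_1$ as an eigenvalue and every pendant diagonal entry to equal $\lam_1$.
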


\bpf
Throughout $P_1$, $P_3$, $P_5$ and $P_7$ will denote paths of $T_k$
between two pendant vertices of $T_k$ that are distance 0, 2, 4 or 6 apart respectively. We make a sequence of observations and repeatedly use Corollary \ref{multiplicity} to  constrain the entries of $B$.

Deleting a linkage $\p$ formed by a $P_7$  and $P_5$ results
in the disjoint union of $(3k-4)$ $P_3$'s and 4 $P_1$'s. 
As each eigenvalue of a matrix whose graph is a path is simple, $\mult_{B(V(\p))}(\lam_1) \leq 3k$.  Because $\mult_B(\lam_1)=3k+2$, Corollary \ref{multiplicity} implies that $\mult_{B(V(\p))}(\lam_1) \geq 3k+2-2= 3k$. Thus each of the  $3k$  principal
submatrices of $B$ corresponding to the components of 
$T_k \setminus (P_7 \du P_5)$ has $\lam_1$ as an eigenvalue. 
By varying the choice of the $P_7$ and $P_5$, we conclude that $b_{vv}= \lambda_1$ whenever $v$ is a pendant vertex of $T_k$.

Deleting a linkage  consisting of $3k$ $P_3$'s and a path joining
two vertices at level 1 results in a graph that is a single vertex in level $1$.
By varying the choice of the two vertices at level 1, it follows from Corollary \ref{multiplicity}  that $b_{11}=\lam_1$, $b_{22}=\lam_1$, and $b_{33}=\lam_1$. 

Deleting a linkage  consisting of three disjoint $P_5$'s results 
in a disjoint union of $(3k-6)$ $P_3$'s, 6 isolated vertices at level 3, 
and one isolated vertex at level 0.   We have shown that the submatrices of $B$ 
corresponding to the isolated vertices each have eigenvalue $\lambda_1$.
As $\mult_B(\lam_2)= 3k-2$,  Corollary \ref{multiplicity} implies that $\lam_2$ is 
an eigenvalue of each $B[V(P_3)]$, and $b_{00}=\lam_2$.  Thus there are at most $3k-6$ components that can contain $\lam_3$ as a (simple) eigenvalue. 
By varying the choice of the  $P_5$'s,  Corollary \ref{multiplicity} implies that $\lam_3$ is 
an eigenvalue of each $B[V(P_3)]$.

Then each principal submatrix of $B$ corresponding to a $P_3$  has trace $\lam_1+\lam_2 + \lam_3$, each principal submatrix of $B$ corresponding to a single vertex at level
1 has trace $\lam_1$, and the principal submatrix of $B$ corresponding to vertex $0$
has trace $\lam_2$.
Hence, on the one hand we have 
\[
 \mbox{tr}(B)= 3k(\lam_1+ \lam_2+\lam_3) + 3\lam_1 + \lam_2.
\]
On the other hand since we know the spectrum of $B$ we have 
\[ \mbox{tr}(B)= (3k+2) \lam_1 + (3k-2) \lam_2 + (3k-3)\lam_3 + 2\lam_4+ 2\lam_5 + \lam_6+\lam_7 + \lam_8.
\]
Therefore
\[
\lam_1+3\lam_2+3\lam_3= 2\lam_4+ 2 \lam_5 + \lam_6+\lam_7+\lam_8.\qedhere 
\]
\epf
\section{Rigid shortest linkages and matrix eigenvalue multiplicities} \label{srsl}

In this section we introduce a new kind of rigid linkage and use it to improve the bounds   in Theorem \ref{thm:sharpG}  for the multiplicities of eigenvalues of symmetric matrices described by a graph. 

\begin{defn}
Let $\alpha,\beta$ be sets of vertices in a graph $G$ such that an $(\alpha,\beta)$-linkage exists, and let $s$ be the smallest number of vertices $V(\p)$ among all $(\alpha,\beta)$-linkages $\p$.  An $(\alpha,\beta)$-linkage is called an \emph{$(\alpha,\beta)$-rigid shortest linkage} if it is the unique $(\alpha,\beta)$-linkage on $s$ vertices.  A linkage $\p$ is called a rigid shortest linkage if it is an $(\alpha,\beta)$-rigid shortest linkage for some $(\alpha,\beta)$.
\end{defn}

\begin{defn}
The {\em rigid shortest linkage number}, denoted by $\rsl_G(t)$ (or $\rsl(t)$ if $G$ is clear), is  the maximum number of vertices in an order $t$ rigid shortest linkage of $G$.
\end{defn}
 
\begin{rem}\label{rem:RSL-RL} A rigid linkage is a rigid shortest linkage so $\rsl(t)\ge \rl(t)$. The reverse inequality is false: A cycle of order $n\ge 3$ is an example, since $\rsl(1)=\lc \frac {n} 2\rc$ and $\rl(1)=1$.  However, a rigid shortest linkage that spans the graph is a rigid linkage, since all vertices are used by the linkage.   By Corollary \ref{RLchainZ}, any spanning rigid linkage is a spanning $\Z$-chain set, and thus has order at least $\Z(G)$.  Thus $\rsl(t)=|V(G)|$ implies $t\ge \Z(G)$.
\end{rem}

\begin{thm}\label{rslthm}
Let $G$ be a graph. 
Then 
\[ {\rm (i)}\, \sum_{i=1}^t q_i\geq \rsl(t)\ \ \mbox{ and }\ \ { \rm (ii)} \ q(G)\geq \lc\frac{\rsl(t)}{t}\rc,\]
for every $A\in\s(G)$ with  $q_i=q_i(A)$  
and every positive integer $t$.
\end{thm}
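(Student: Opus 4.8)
The plan is to mimic the proof of Theorem~\ref{thm:sharpG}. Fix $A\in\s(G)$ and choose a rigid shortest linkage $\p=\{p_i\}_{i=1}^t$ of order $t$ with $|V(\p)|=\rsl(t)$; it is enough to prove
\[
\sum_{i=1}^{t}q_i(A)\ \ge\ |V(\p)|,
\]
since then (i) is immediate and (ii) follows exactly as in Theorem~\ref{thm:sharpG}: $q(A)=q_1(A)\ge\frac1t\sum_{i=1}^tq_i(A)\ge\rsl(t)/t$ forces $q(A)\ge\lc \rsl(t)/t\rc$ for every $A\in\s(G)$. One cannot run the argument of Theorem~\ref{thm:sharpG} verbatim, because the statement it would need---$\mult_{A(V(\p))}(\lam)\ge\mult_A(\lam)-t$ for a rigid \emph{shortest} linkage---is false: for an odd cycle $C_n$ with $\p$ the unique shortest path between two vertices at distance $(n-1)/2$ (so $t=1$), a suitable shift of the adjacency matrix has a multiplicity-$2$ eigenvalue that is \emph{not} an eigenvalue of $A(V(\p))$. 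So the displayed aggregate inequality, which holds with equality in that example, is the right substitute for Corollary~\ref{multiplicity}.

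To prove it, let $\alpha=\{\alpha_i\}_{i=1}^t$ be the $\alpha$-endpoints of $\p$ and set $\mathcal{W}=\operatorname{span}\{A^ke_{\alpha_i}: k\ge 0,\ 1\le i\le t\}$, an $A$-invariant subspace of $\mathbb{R}^n$. Because $\mathcal{W}$ is $A$-invariant it is the direct sum of its intersections with the eigenspaces of $A$, and the $\lam$-component equals $\operatorname{span}\{P_\lam e_{\alpha_i}:1\le i\le t\}$ (with $P_\lam$ the eigenprojection), which has dimension at most $\min(\mult_A(\lam),t)$; summing and using that $\{q_i(A)\}$ and $\{m_i(A)\}$ are conjugate partitions gives $\dim\mathcal{W}\le\sum_\lam\min(\mult_A(\lam),t)=\sum_{i=1}^tq_i(A)$. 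Hence it suffices to show $\dim\mathcal{W}\ge|V(\p)|$. Write $p_i$ in path order as $\alpha_i=u_{i,0},u_{i,1},\dots,u_{i,\ell_i}=\beta_i$; I claim the $|V(\p)|$ vectors $\{A^{j}e_{\alpha_i}:1\le i\le t,\ 0\le j\le\ell_i\}$ are linearly independent, equivalently that the $|V(\p)|\times|V(\p)|$ matrix $M$ with $M\bigl[v,(i,j)\bigr]=(A^{j})_{v,\alpha_i}$ (rows indexed by $V(\p)$, columns by the pairs $(i,j)$ with $0\le j\le\ell_i$) has $\det M\ne 0$.

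For this I would expand $\det M$ as a signed sum over systems that assign to each column $(i,j)$ a walk of length $j$ from $\alpha_i$ to a vertex of $V(\p)$, the terminal vertices forming a bijection onto $V(\p)$, exactly as in the derivation of Theorem~\ref{cycledet}. The ``diagonal'' system, in which column $(i,j)$ is the length-$j$ initial segment of $p_i$, contributes a term of sign $+1$ whose weight is the monomial $\prod_i\prod_{m=1}^{\ell_i}A_{u_{i,m-1}u_{i,m}}^{\,\ell_i-m+1}$ in the entries of $A$; and since $\p$ is the \emph{unique} shortest $(\alpha,\beta)$-linkage, no other system produces a weight supported only on the edges of $\p$ (a walk using only $\p$-edges cannot leave its path, and the terminal-vertex bijection, together with the forced values at the $j=0,1,\dots$ columns, then pins the system down to the diagonal one). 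Thus $\det M$ is a nonzero polynomial in the entries of $A$, so the claim holds for generic $A\in\s(G)$. The real work---and the main obstacle---is to upgrade this to $\det M\ne 0$ for \emph{every} $A\in\s(G)$; I expect to do this by an induction on $|V(\p)|$ that removes one vertex of $\p$ at a time in the order of a rigid linkage forcing process for $\p$ on $G[V(\p)]$ (such a process exists because $\p$, being the shortest $(\alpha,\beta)$-linkage of $G$, is the unique $(\alpha,\beta)$-linkage of $G[V(\p)]$, hence a spanning rigid linkage there by Remark~\ref{rem:RSL-RL}, so Theorem~\ref{Zsharplinkage} applies), keeping track of the fact that at each force the newly forced linkage vertex is reached by the linkage segment, a walk admitting no cancelling walk of the same length. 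Carrying out that bookkeeping for arbitrary $A\in\s(G)$---rather than relying on Schur-complement or perturbation shortcuts that only see generic $A$, since neither $q(A)$ nor $\det M$ behaves well under small perturbations within $\s(G)$---is where the difficulty of the proof is concentrated.
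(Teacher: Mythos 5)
Your reduction to the single inequality $\sum_{i=1}^t q_i(A)\ge |V(\p)|$ is fine, and so is the upper bound $\dim\mathcal{W}\le\sum_{\lambda}\min(\mult_A(\lambda),t)=\sum_{i=1}^t q_i(A)$ for the Krylov-type space $\mathcal{W}$. But the crux of your argument --- that $\dim\mathcal{W}\ge|V(\p)|$, i.e.\ $\det M\neq 0$, holds for \emph{every} $A\in\s(G)$ --- is exactly the step you leave open, and it is a genuine gap, not bookkeeping. Your walk-expansion only identifies one monomial of $\det M$ that appears with a nonzero coefficient for the ``diagonal'' system, giving generic nonvanishing at best; and even that identification is delicate, because the paths of a rigid shortest linkage need not individually be shortest (or unique) paths --- in the paper's own $C_n$ example with $t=2$, one path winds almost all the way around the cycle --- so walks from $\alpha_i$ can reach vertices of $p_{i'}$ (or backtrack, or detour off $V(\p)$ and return) in ways that destroy the triangular/pinning structure you invoke, and sign cancellations among such systems are not ruled out. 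Since, as you yourself note, neither $q_i(A)$ nor $\det M$ behaves continuously on $\s(G)$, genericity buys nothing, and the proposed induction along an $\RL$-forcing process of $\p$ in $G[V(\p)]$ is only a plan: the forcing order controls the combinatorics of $\p$ inside $V(\p)$, but the entries $(A^j)_{v,\alpha_i}$ you must control are sums over walks in all of $G$, which the forcing process does not see.

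The paper avoids this problem entirely by never working with powers of $A$ or Krylov spaces. It takes $A_x=xI-A$ over $\mathbb{R}[x]$ and applies Theorem \ref{cycledet} to the single minor $\det(A_x(\alpha,\beta))$: each $(\alpha,\beta_\sigma)$-linear subgraph contributes degree equal to its number of isolated vertices, so minimality of $s=|V(\p)|$ gives $\deg\det(A_x(\alpha,\beta))\le n-s$, and uniqueness of the minimum linkage makes $\p$ together with $n-s$ isolated vertices the \emph{only} subgraph contributing to the coefficient of $x^{n-s}$, which is therefore $\pm w(\p)\neq 0$; hence the degree is exactly $n-s$ for every $A\in\s(G)$ at once. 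Then the Smith normal form facts from \cite{KS} ($q_k=\deg e_{n-k+1}(x)$ and $\Delta_{n-t}(A_x)=\prod_{i\le n-t}e_i(x)$ divides any $(n-t)\times(n-t)$ minor) give $q_{t+1}+\cdots+q_n\le n-s$, i.e.\ $\sum_{i=1}^t q_i\ge\rsl(t)$, and (ii) follows as in Theorem \ref{thm:sharpG}. If you want to salvage your approach, you would essentially have to prove your $\det M\neq 0$ claim with the same kind of ``unique extremal term'' argument the paper runs on $\det(A_x(\alpha,\beta))$ --- at which point you are reproducing the paper's mechanism in a harder setting, since $\det M$ has no variable $x$ isolating the extremal term by degree.
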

\begin{proof}
 Let $\alpha=\{\alpha_i\}_{i=1}^t$, $\beta=\{\beta_i\}_{i=1}^t$ be subsets of $V(G)$ such that $\p$ is an $(\alpha,\beta)$-rigid shortest linkage with $|V(\p)|=s$.  
Let $A\in \s(G)$ and set $A_x=xI-A$ viewed as a matrix over the ring of polynomials.  By Theorem \ref{cycledet}, a summand in the formula of $\det(A_x(\alpha,\beta))$ is a polynomial of degree $j$ if the corresponding linear subgraph $H$ has $j$ isolated vertices in the  generalized cycle part of $H$.  By the definition of $s$,  $H$ has at most $n-s$ isolated vertices in  its generalized cycle, so $\deg\det(A_x(\alpha,\beta))\leq n-s$.  Let $H_o=\p\du C$ with $C$ composed of $n-s$ isolated vertices.  Then $H_o$ is the only linear subgraph that contributes to the coefficient of $x^{n-s}$ in $\det(A_x(\alpha,\beta))$ and this coefficient is nonzero.  Therefore, $\deg\det(A_x(\alpha,\beta))=n-s$.

Let $\Delta_j(A_x)$ be the greatest common divisor of all $j\times j$ minors of $A_x$. 
Since $\Delta_{n-t}(A_x)$ is the greatest common divisor of all $(n-t)\times (n-t)$ minors of $A_x$, $\Delta_{n-t}(A_x)$ divides $\det(A_x(\alpha,\beta))$.  By \cite{KS},
$\Delta_{n-t}(A_x)$ is equal to $\Delta_{n-t}(S)$, where $S$ is the Smith Normal Form of $A_x$.  By \cite{KS}, $S=\diag(e_1(x),\dots,e_n(x))$ where $e_i(x)$ divides $e_{i+1}(x)$.  Clearly $\Delta_k(S)=\prod_{i=1}^k e_i(x)$, so $\Delta_k(A_x)=\prod_{i=1}^k e_i(x)$.   By  \cite{KS},
$\mult_A(\lam)\ge k$ if and only if $(x-\lam)$ divides $e_{n-k+1}(x)$.  Thus $q_k=\deg e_{n-k+1}(x)$, and 
\[q_{t+1} + \cdots + q_n =\deg e_{n-t}(x)+\dots+\deg e_{1}(x)= \deg\Delta_{n-t}(A_x) \leq\deg\det(A_x(\alpha,\beta))= n-s.\] 
The inequalities (i) and (ii) follow from this by the same arguments given in the proof of  Theorem \ref{thm:sharpG}.  
\end{proof}

The next example shows that an intermediate choice of $t$ for the lower bound (ii) in Theorem \ref{rslthm} may provide a better bound than either of two extreme values of $t$, both of which yield known lower bounds.
In the case $t=1$, $q(G)\ge \rsl(1)$, and $\rsl(1)$ is the
maximum number of vertices in a unique shortest path of $G$; this bound appears in \cite{AACFMN13}.  For $t=\Zsharp(G)=\Z(G)$, $\rsl(t)=n$, and $q(G)\ge \lc\frac{n}{\Z(G)}\rc$ follows from $q(G)\ge \lc\frac{n}{\M(G)}\rc$ which is immediate from the definition and  the known inequality $\M(G)\le \Z(G)$ \cite{AIM08}.

\begin{ex}\label{ex:seth} 
Let $G$ be the graph in Figure \ref{fig:seth}.  Observe that $\rsl(1)=4$, because $(1,2,3,4)$ is both the shortest path and the unique path on four vertices between $1$ and $4$, and  no path in $G$ on five vertices is the unique path on five vertices with its endpoints.  For $\alpha =\{1,6\}$ and $\beta=\{5,9\}$, $\p=\{(1,2,3,4,5), (6,7,8,9)\}$ is an $(\alpha,\beta)$-rigid shortest linkage, so $\rsl(2)\ge 9$.  By Remark \ref{rem:RSL-RL}, any rigid shortest linkage containing ten vertices has order at least $\Z(G)=3$, so $\rsl(2)= 9$.  Thus $\lc\frac{\rsl(2)}{2}\rc=\lc\frac 9 2 \rc 
= 5>4=\lc\frac{\rsl(1)}{1}\rc$ and $\lc\frac{\rsl(2)}{2}\rc= 5>4=\lc\frac {10} 3\rc=\lc\frac{\rsl(3)}{3}\rc$.

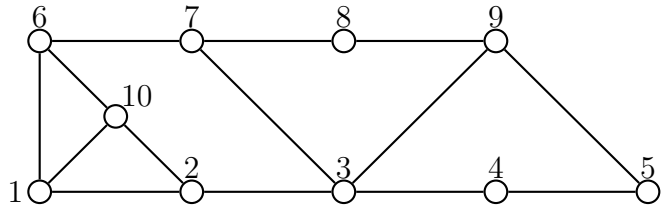
\begin{figure}[h]\vspace{-3pt}
\begin{center}
\scalebox{1}{
\begin{tikzpicture}
\node[label={[label distance=-5pt]180:$1$}] (1) at (0,0) {};
\node[label={[label distance=-5pt]$6$}] (6) at (0,2) {};
\node[label={[label distance=-5pt]$2$}] (2) at (2,0) {};
\node[label={[label distance=-5pt]$7$}] (7) at (2,2) {};
\node[label={[label distance=-5pt]$3$}] (3) at (4,0) {};
\node[label={[label distance=-5pt]$8$}] (8) at (4,2) {};
\node[label={[label distance=-5pt]$4$}] (4) at (6,0) {};
\node[label={[label distance=-5pt]$9$}] (9) at (6,2) {};
\node[label={[label distance=-5pt]$5$}] (5) at (8,0) {};
\node[label={[label distance=-5pt]45:$10$}] (10) at (1,1) {};
\draw (1)--(2)--(3)--(4)--(5);
\draw (6)--(7)--(8)--(9);
\draw (1)--(6);
\draw (1)--(10);
\draw (5)--(9)--(3)--(7);
\draw (6)--(10)--(2);
\end{tikzpicture}}\vspace{-15pt}
\end{center}\caption{\label{fig:seth} The graph $G$ in Example \ref{ex:seth}}
\end{figure}
\end{ex}


\section{Computation of eigenvalue multiplicities for families of graphs} \label{comp}

In this section we use $\rsl_G(t)$ to compute extreme possibilities for unordered multiplicity lists in the cases of several families of graphs.

\begin{prop}\label{prop:Kn}
For the complete graph $K_n$ on $n\ge 2$ vertices,\[\rsl_{K_n}(t)=t+1\text{ for }1\leq t\leq n-1.\] Moreover, there is a matrix in $\s(K_n)$ that achieves equality in every bound in Theorem $\ref{rslthm}$ simultaneously.
\end{prop}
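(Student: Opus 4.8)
The plan is to compute $\rsl_{K_n}(t)$ directly, then exhibit a single matrix in $\s(K_n)$ realizing all the bounds at once. For the first part, note that in $K_n$ every pair of distinct vertices is adjacent, so any shortest path between distinct endpoints has exactly two vertices (an edge), and a single-vertex path has one vertex. Given $\alpha=\{\alpha_i\}_{i=1}^t$ and $\beta=\{\beta_i\}_{i=1}^t$ with an $(\alpha,\beta)$-linkage, the minimum vertex count $s$ is achieved by using edges for the pairs with $\alpha_i\neq\beta_i$ and single vertices otherwise; so $s = t + (\text{number of indices with }\alpha_i\neq\beta_i)$. For the linkage to be a \emph{rigid} shortest linkage we need uniqueness among $s$-vertex linkages. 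First I would observe that if two or more pairs have $\alpha_i\neq\beta_i$, uniqueness fails: with pairs $\{a,b\}$ and $\{c,d\}$ realized by edges $ab$, $cd$, one can instead realize the pattern differently only if endpoints can be permuted, but since these are $(\alpha,\beta)$-linkages the pattern is fixed; the real obstruction is that the two disjoint edges $ab,cd$ can be replaced by, e.g., paths through a common third vertex — but that increases vertex count. The cleaner argument: a shortest $(\alpha,\beta)$-linkage in $K_n$ with $r\ge 2$ nontrivial pairs consists of $r$ disjoint edges, and I claim it is never unique when $r\ge 2$ provided $n$ is large enough to reroute — actually in $K_n$ with $r\ge 2$ and $s = t+r \le n$, one can swap: replace edges $\alpha_1\beta_1$ and $\alpha_2\beta_2$... no, the endpoints are pinned. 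So the correct subtlety is: with the pattern $\{\{\alpha_i,\beta_i\}\}$ fixed, a shortest linkage uses the edge $\alpha_i\beta_i$ for each nontrivial pair, and \emph{this edge is forced} — there is no other $2$-vertex path from $\alpha_i$ to $\beta_i$. Hence the shortest $(\alpha,\beta)$-linkage is always unique, i.e., always rigid-shortest, and $\rsl$ is just the max of $t + r$ over valid configurations. To maximize we want $r$ as large as possible, i.e., all $t$ pairs nontrivial, giving $s = 2t$, provided $2t \le n$ so that $2t$ distinct vertices exist. When $2t > n$ we cannot have all pairs nontrivial and disjoint; here I expect the main computational obstacle — determining the true maximum requires checking that forcing overlaps among the $\alpha$'s and $\beta$'s (sharing vertices between different pairs' endpoint sets, which is allowed since $\alpha,\beta$ are sets and a vertex can be an endpoint of up to... no, in a linkage the paths are disjoint, so all $2r + (t-r)$ vertices used are distinct, meaning $s \le n$ forces $t + r \le n$, i.e., $r \le n - t$). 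Thus $\rsl_{K_n}(t) = t + \min\{t, n-t\}$. But the claimed answer is $t+1$, which matches only when $\min\{t,n-t\}=1$; reconciling this means I must have the uniqueness direction backwards: with $r\ge 2$ nontrivial disjoint edges $\alpha_1\beta_1,\alpha_2\beta_2$ in $K_n$, since $\alpha_1,\beta_1,\alpha_2,\beta_2$ are distinct, the path $\alpha_1\to\beta_2$ exists too, so $\{\alpha_1\beta_2\text{-path},\alpha_2\beta_1\text{-path}\}$ — but that's a different pattern, so it doesn't violate $(\alpha,\beta)$-rigidity. The genuine violation: one could route $\alpha_1$'s path as $\alpha_1\beta_1$ \emph{or} through a detour, but detours cost vertices. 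So uniqueness does hold, and the answer $t+1$ suggests the \emph{definition} intends: $\rsl(t)$ maxes over linkages of order $t$, and the shortest linkage between \emph{its own} endpoints must be the whole linkage — but a rigid shortest linkage need not be shortest over all patterns. I would therefore re-examine: with $r = 2$, $t = 2$, $n = 4$, $\alpha=\{1,3\},\beta=\{2,4\}$: the linkage $\{12,34\}$ on $4$ vertices — is $\{14,32\}$ also a $(\{1,3\},\{2,4\})$-linkage on $4$ vertices? Endpoints of $14$ are $1,4$; of $32$ are $3,2$. So $\alpha$-endpoints $\{1,3\}$, $\beta$-endpoints $\{4,2\}$ — yes! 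This is the Example \ref{exuniquenotrigid} phenomenon. So with $r\ge 2$, choosing which edge goes to which creates multiple $s$-vertex $(\alpha,\beta)$-linkages, defeating rigidity. Hence we must have $r \le 1$, giving $s \le t+1$, and $s = t+1$ is achieved (one nontrivial pair, $t-1$ trivial ones, total $t+1 \le n$ since $t \le n-1$). That is the crux.

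Concretely, first I would prove the upper bound $\rsl_{K_n}(t) \le t+1$: given an $(\alpha,\beta)$-rigid shortest linkage $\p$ of order $t$, suppose at least two paths $p_i, p_j$ each have two vertices (distinct endpoints). Swapping the target assignment — using edge $\alpha_i\beta_j$ and edge $\alpha_j\beta_i$ (both exist in $K_n$, and all four endpoints are distinct so these are vertex-disjoint) together with the remaining paths of $\p$ — produces a second $(\alpha,\beta)$-linkage on the same number of vertices $s$, contradicting rigidity of the shortest linkage. Hence at most one path is nontrivial, so $|V(\p)| \le (t-1)\cdot 1 + 1\cdot 2 = t+1$. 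For the lower bound, take $\alpha = \{1,2,\dots,t\}$, $\beta = \{t+1, 2, 3, \dots, t\}$, so the first pair $\{1, t+1\}$ is nontrivial and pairs $2,\dots,t$ are the singletons $\{2\},\dots,\{t\}$; this requires $t+1$ distinct vertices, available since $t \le n-1$. The linkage $\p = \{(1,t+1)\} \cup \{(i) : 2 \le i \le t\}$ has $t+1$ vertices; it is the unique shortest $(\alpha,\beta)$-linkage because the singleton paths are forced (a path with both endpoints equal to $i$ is just $(i)$) and the only $2$-vertex path between $1$ and $t+1$ is the edge. So $\rsl_{K_n}(t) = t+1$.

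For the "moreover" part, I must produce $A \in \s(K_n)$ making every inequality in Theorem \ref{rslthm} an equality simultaneously, i.e., for each $t$ with $1 \le t \le n-1$, both $\sum_{i=1}^t q_i(A) = \rsl(t) = t+1$ and $q(A) = \lceil (t+1)/t \rceil = 2$ (for $t \ge 1$, $\lceil (t+1)/t\rceil = 2$). So I need $A$ with exactly two distinct eigenvalues and with the $q_i$-partition satisfying $q_1 + \dots + q_t = t+1$ for all $t \in \{1,\dots,n-1\}$. With $q(A) = 2$, we have $q_1 = 2$ and $q_i = 1$ for... wait, $q_i(A)$ counts eigenvalues of multiplicity $\ge i$; with two eigenvalues of multiplicities $m_1 \ge m_2$, $q_1 = 2$, $q_i = 1$ for $2 \le i \le m_1$ (assuming $m_2 < $ those), and we need $q_1 = 2$, then $q_1 + q_2 = 3 = t+1$ at $t=2$: good, $q_2 = 1$; in general $\sum_{i=1}^t q_i = 2 + (t-1) = t+1$ as long as $q_i = 1$ for $2 \le i \le t$, which holds iff $m_1 \ge t$; we need this up to $t = n-1$, so $m_1 \ge n-1$, forcing $m_1 = n-1$, $m_2 = 1$. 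So $A$ must have eigenvalues with multiplicities $n-1$ and $1$ — for instance $A = J_n$ (all-ones), with eigenvalues $n$ (once) and $0$ ($n-1$ times), which lies in $\s(K_n)$ since all off-diagonal entries are nonzero. Checking: $q_1 = 2, q_2 = \dots = q_{n-1} = 1$, so $\sum_{i=1}^t q_i = t+1$ for $1 \le t \le n-1$, matching $\rsl(t)$, and $q(A) = 2 = \lceil \rsl(t)/t \rceil$ for every such $t$. The main obstacle in this half is just confirming that a single matrix works for \emph{all} $t$ at once; the computation above shows $J_n$ does, so I would present $A = J_n$ and verify these two facts, completing the proof.
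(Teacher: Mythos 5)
Your proof is correct and follows essentially the same route as the paper: your lower-bound linkage (one edge plus $t-1$ forced singleton paths) is the paper's construction up to relabeling, and your matrix $J_n$ is just the adjacency matrix of $K_n$ shifted by $I$, giving the same multiplicity list $(n-1,1)$ and hence the same verification of equality for all $t$. Your swap argument ruling out two nontrivial (necessarily edge) paths merely makes explicit the upper bound that the paper records as ``clearly, no larger rigid shortest linkages can exist.''
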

\begin{proof}
With vertex set $\{1,2,\ldots,n\}$, there is a rigid shortest linkage with $\alpha=\{1,2,\ldots,t\}$ and $\beta=\{1,2,\ldots,t-1,t+1\}$ for $1\leq t\leq n-1$.  Clearly, no larger rigid shortest linkages can exist.  The adjacency matrix  $A\in \s(K_n)$ has spectrum $\{ (-1)^{(n-1)}, n-1 \}$ and $\sum_{i=1}^tq_i(A)=t+1=\rsl(t)$ for all $1\leq t \leq n-1$.
\end{proof}

\begin{prop}
For the cycle on $n$ vertices, $C_n$,
 \[\rsl_{C_n}(t)=\left\{\begin{array}{cl}
\lc \frac{n}{2} \rc & \text{if }t=1,  \text{ and}\\
n & \text{if }t=2.\end{array}\right. \]
Moreover, there is a matrix in $\s(C_n)$ that achieves equality in every bound in Theorem $\ref{rslthm}$ simultaneously.
\end{prop}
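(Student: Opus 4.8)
The plan is to handle the two assertions---the values $\rsl_{C_n}(1)$ and $\rsl_{C_n}(2)$, and the existence of a single matrix that is simultaneously extremal---one at a time. Write $V(C_n)=\{1,\dots,n\}$ in cyclic order and assume $n\ge 3$. For $t=1$, a rigid shortest linkage of order $1$ is exactly a path that is the unique shortest path between its two endpoints; if such a path has endpoints at distance $d$, then $d\le\lceil n/2\rceil-1$ (at distance $n/2$, which occurs only for even $n$, the two arcs have equal length, so there are two shortest paths), hence the path has at most $\lceil n/2\rceil$ vertices, and distance $\lceil n/2\rceil-1$ attains this bound; so $\rsl_{C_n}(1)=\lceil n/2\rceil$. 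For $t=2$, I would delete the vertex $n$, so $C_n-n$ is the path $(1,2,\dots,n-1)$, and take $\p=\{(1,2,\dots,n-1),(n)\}$ with $\alpha=\{1,n\}$, $\beta=\{n-1,n\}$. Since $n\in\alpha\cap\beta$, every $(\alpha,\beta)$-linkage must use $(n)$ as one component and a $1$--$(n-1)$ path avoiding $n$ as the other, and the only such path is $C_n-n$ itself; thus $\p$ is the unique $(\alpha,\beta)$-linkage, it spans $C_n$, so $s=n$ and $\p$ is an $(\alpha,\beta)$-rigid shortest linkage of order $2$, giving $\rsl_{C_n}(2)=n$. (Splitting the long path into more subpaths shows in the same way that $\rsl_{C_n}(t)=n$ for all $2\le t\le n$.)

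Next I would record what ``moreover'' demands. Since $\M(C_n)=2$, every $A\in\s(C_n)$ has $q_i(A)=0$ for $i\ge 3$, so $\sum_{i=1}^t q_i(A)=n=\rsl_{C_n}(t)$ holds automatically for all $t\ge 2$; the only nontrivial equality in Theorem~\ref{rslthm}(i) is for $t=1$, namely $q_1(A)=\lceil n/2\rceil$, and in Theorem~\ref{rslthm}(ii) one has $\lceil\rsl_{C_n}(t)/t\rceil=\lceil n/2\rceil$ exactly for $t\in\{1,2\}$. So it suffices to produce $A\in\s(C_n)$ with $q(A)=q_1(A)=\lceil n/2\rceil$: this gives equality in (i) for $t=1$, and together with the bound $q(C_n)\ge\rsl_{C_n}(1)=\lceil n/2\rceil$ from (ii) it forces $q(C_n)=\lceil n/2\rceil$, which is equality in (ii) as well. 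For the matrix I would take $A=N+N^\top$, where $N$ is the signed permutation matrix of the $n$-cycle $(1\,2\,\cdots\,n)$ with exactly one of its nonzero entries equal to $-1$ and the rest equal to $+1$; equivalently, $A$ is the weighted adjacency matrix of $C_n$ having a single edge of weight $-1$, so $A\in\s(C_n)$. Because $N$ is orthogonal, $N^\top=N^{-1}$, and because the product of its nonzero entries is $-1$, $N^n=-I$; hence $N$ has the $n$ distinct eigenvalues $e^{\,i\pi(2k+1)/n}$ ($k=0,\dots,n-1$) and $A=N+N^{-1}$ has eigenvalues $2\cos\!\bigl(\tfrac{(2k+1)\pi}{n}\bigr)$, $k=0,\dots,n-1$. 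Two of these agree precisely when $k=k'$ or $k+k'\equiv n-1\pmod n$: if $n$ is odd the pairing fixes $k=(n-1)/2$ (the simple eigenvalue $-2$) and pairs the rest, giving $1+(n-1)/2=\lceil n/2\rceil$ distinct eigenvalues of multiplicity at most $2$; if $n$ is even the pairing is fixed-point-free, giving $n/2=\lceil n/2\rceil$ distinct eigenvalues each of multiplicity $2$. Either way $q(A)=\lceil n/2\rceil$, which completes the proof.

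The main obstacle is the even case of the matrix. The ordinary adjacency matrix of $C_n$ has eigenvalues $2\cos(2\pi k/n)$, and for even $n$ both $\pm 2$ are simple, so it has $n/2+1$ distinct eigenvalues and is \emph{not} extremal; one must shift the spectrum, which the single negative edge does by replacing it with $2\cos((2k+1)\pi/n)$, consisting of $n/2$ honest conjugate pairs when $n$ is even. Verifying this eigenvalue count (the bound on multiplicities being automatic from $\M(C_n)=2$) is the only real computation; the distance analysis for $t=1$ and the uniqueness check for the order-$2$ linkage are routine.
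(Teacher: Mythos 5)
Your proof is correct and takes essentially the same route as the paper: the same order-$2$ spanning linkage $\{(1,2,\dots,n-1),(n)\}$ with $\alpha=\{1,n\}$, $\beta=\{n-1,n\}$, the same unique-shortest-path analysis giving $\lceil n/2\rceil$ for $t=1$, and equality in the bounds achieved by a (signed) adjacency matrix of the cycle. The only minor difference is that you use the one-negative-edge matrix $N+N^\top$ uniformly for all $n$ (verified via the roots of $x^n=-1$), whereas the paper takes the plain adjacency matrix for odd $n$ and negates one edge only when $n$ is even; both yield exactly $\lceil n/2\rceil$ distinct eigenvalues.
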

\begin{proof}
With vertex set $\{1,2,\ldots,n\}$ and edge set $\{\{1,2\},\{2,3\},\ldots,\{n,1\}\}$, the 
path $(1, 2, \ldots,\lc\frac{n}{2}\rc)$  is a rigid shortest linkage of order 1 with $\alpha=\{1\}$ and $\beta=\{\lc\frac{n}{2}\rc\}$.  
The paths $p_1=(1,2,\dots, n-1)$ and $p_2=(n)$ form 
a rigid shortest linkage of order 2 on $n$ vertices
with $\alpha=\{1, n\}$ and $\beta=\{n-1,n\}$.  Hence we have the stated values of $\rsl_{C_n}(t)$.

Equality in the bounds from Theorem $\ref{rslthm}$ are obtained by the adjacency matrix when $n$ is odd, and by the matrix obtained from the adjacency matrix 
by negating the $(n,1)$ and $(1,n)$ entries  when $n$ is even.
\end{proof}

\begin{prop}\label{prop:Kmn}
For the complete bipartite graph, $K_{m,n}$ with $1\leq m\leq n$ and $n\neq 1$, \[\rsl_{K_{m,n}}(t)=\left\{\begin{array}{cl}
2 & \text{if }t=1\text{ and }m\geq 2,\\
3 & \text{if }t=1\text{ and }1=m<n,  \mbox{ and }\\
t+2 & \text{if }2\leq t \leq m+n-2.\end{array}\right.\]
\end{prop}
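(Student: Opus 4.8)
The plan is to establish the formula for $\rsl_{K_{m,n}}(t)$ by a case analysis on $t$, verifying in each case both that a rigid shortest linkage of the claimed size exists and that no larger one does. Throughout, let the vertex bipartition of $K_{m,n}$ be $U \dot{\cup} W$ with $|U| = m$, $|W| = n$, and recall that distances in $K_{m,n}$ are all $1$ or $2$ (between vertices in opposite parts, distance $1$; between distinct vertices in the same part, distance $2$).

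First I would handle $t = 1$. A rigid shortest $(\{\alpha_1\},\{\beta_1\})$-linkage of order $1$ is a unique shortest path between $\alpha_1$ and $\beta_1$. If $\alpha_1$ and $\beta_1$ lie in opposite parts, the unique shortest path is the edge between them, giving $2$ vertices — but this is only \emph{unique} as a shortest path, which it always is, so it contributes $\rsl(1) \ge 2$ always. If $\alpha_1, \beta_1$ lie in the same part, a shortest path has three vertices $(\alpha_1, x, \beta_1)$ with $x$ in the other part; this is unique exactly when the other part is a single vertex. So when $m \ge 2$, the only way to get a same-part pair with a unique shortest path requires $|W| = 1$, contradicting $n \ne 1$; hence $\rsl_{K_{m,n}}(1) = 2$. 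When $1 = m < n$, taking $\alpha_1, \beta_1$ to be the two vertices of $W$ adjacent to the single vertex $u$ of $U$ — wait, I should be careful: with $m = 1$ I want the same-part pair to be in $W$, and the intermediate vertex to be forced; the intermediate vertex must lie in $U$, which has exactly one element, so $(\alpha_1, u, \beta_1)$ is the unique shortest path on $3$ vertices. Thus $\rsl_{K_{1,n}}(1) = 3$.

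Next, the main case $2 \le t \le m+n-2$, where the claim is $\rsl_{K_{m,n}}(t) = t+2$. For the lower bound I would exhibit an explicit $(\alpha,\beta)$-rigid shortest linkage on $t+2$ vertices: take $t-1$ of the paths to be single edges reusing a cleverly chosen structure, and one path to be a length-$2$ path $(a, c, b)$ with $a, b$ in the same part and $c$ the forced intermediate vertex. Concretely, pick vertices so that $t+1$ of them are singletons or edge-endpoints and the linkage looks like one "$P_3$" plus $t-1$ "$P_2$"s sharing no vertices, using $2(t-1) + 3 = 2t+1$ — that's too many, so instead the right construction uses single-vertex paths: take $t-1$ paths to be single vertices and one path of length $2$, for $(t-1) + 3 = t+2$ vertices total; the single-vertex paths pin down vertices that block all alternative routings of the $P_3$. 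The key point is that after removing the $t-1$ singleton terminals, the remaining part that the $P_3$ must traverse has its intermediate vertex forced, exactly as in the $m=1$ analysis. For the upper bound, I would argue that any order-$t$ rigid shortest linkage has at most $t$ paths of length $\le 2$ contributing at most... here one shows a shortest $(\alpha,\beta)$-linkage in $K_{m,n}$ has each path of length $0$, $1$, or $2$, and at most one path can have length $2$ while remaining the \emph{unique} shortest linkage (a second length-$2$ path could swap intermediate vertices or endpoints with the first, or reroute through the vacated part), so $|V(\p)| \le (t-1) \cdot 2 + 3$, and refining using that length-$1$ and length-$0$ paths use fewer vertices, one gets $|V(\p)| \le t + 2$.

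The hard part will be the upper bound in the main case: proving that a rigid shortest linkage of order $t$ cannot have more than $t+2$ vertices, which amounts to showing that at most one path can be "long" (length $2$) in any \emph{unique} shortest $(\alpha,\beta)$-linkage of $K_{m,n}$. The obstacle is ruling out all rerouting swaps — if two paths $p_k = (a_k, c_k, b_k)$ and $p_\ell = (a_\ell, c_\ell, b_\ell)$ both have length $2$, one must carefully check whether some alternative linkage on the same vertex count (hence also shortest) exists: swapping $c_k \leftrightarrow c_\ell$, or rerouting one path through an unused vertex of $U$ or $W$, or recombining endpoints. Since $t \le m+n-2$ there is always at least one vertex of each part outside the terminal set in a suitable configuration (this is exactly where the upper bound $m+n-2$ enters), giving room for such a swap and contradicting uniqueness; making this counting airtight across the sub-cases $m = 1$ versus $m \ge 2$, and $t$ near $m+n-2$, will require the most care.
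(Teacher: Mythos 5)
Your $t=1$ analysis is fine and matches the paper's. The genuine gap is in the lower bound for the main case $2\le t\le m+n-2$: your extremal construction is ``one $P_3$ $(a,c,b)$ plus $t-1$ singleton paths chosen to block all alternative routings.'' Blocking a rerouting $(a,c',b)$ requires $c'$ to lie on some path of the linkage, so with all other paths being singletons you need every vertex of the part containing $c$ other than $c$ itself to be among the $t-1$ singletons; centering the $P_3$ in the smaller part this forces $t-1\ge m-1$, i.e.\ $t\ge m$. For $m\ge 3$ and $2\le t\le m-1$ (e.g.\ $K_{5,5}$ with $t=2$) there are not enough singletons, some vertex $c'$ of that part is unused, and $(a,c',b)$ together with your singletons is a second shortest $(\alpha,\beta)$-linkage, so your linkage is not rigid and the bound $\rsl_{K_{m,n}}(t)\ge t+2$ is not established in that range. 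You cannot repair this by promoting some singletons to edges to cover more of the part: a $P_3$ and an edge cannot coexist in a rigid shortest linkage (depending on how the edge's endpoints are split between $\alpha$ and $\beta$, either a cross-pairing of endpoints yields a strictly shorter linkage, or an equal-size rerouting such as replacing $(a,x,b),(x',w)$ by $(a,x'),(w,x,b)$ kills uniqueness). The paper's construction for $m\ge 2$ avoids the $P_3$ entirely: it takes two \emph{edges} $(v_1,w_1)$ and $(w_2,v_2)$ with the crossed assignment $\alpha=\{v_1,w_2\}$, $\beta=\{w_1,v_2\}$ (the other pairing would join two same-part vertices and cost at least six vertices, so the shortest $(\alpha,\beta)$-linkage is forced to be these two edges), plus $t-2$ singletons, giving $t+2$ vertices for all $2\le t\le m+n-2$. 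Your proposal never produces this two-edge crossed configuration, having discarded edge paths early on.

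A secondary point: in your upper-bound sketch, the mechanism you cite ($t\le m+n-2$ guaranteeing a free vertex in each part ``giving room for a swap'') is not where the argument lives. The hypothesis $t\le m+n-2$ is only needed for the lower-bound construction to fit inside the graph; the upper bound $|V(\p)|\le t+2$ needs no such room. What kills two length-two paths is a center swap (same-part centers) or a strictly shorter cross-pairing (opposite-part centers), and what kills a length-two path together with an edge is the rerouting above --- none of these require a vertex outside $V(\p)$. The free-vertex consideration enters only in the complementary direction: a $P_3$ centered at $x$ can survive only if every other vertex of $x$'s part already lies on the linkage (as a singleton), which is exactly what caps the count at $3+(t-1)=t+2$; otherwise the only non-singleton paths are at most two edges with crossed orientation, giving $2+2+(t-2)=t+2$.
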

\begin{proof}
Denote the vertices as $\{v_1,\ldots,v_m,w_1,\ldots,w_n\}$ where $v_i$ is adjacent to $w_j$ for all $i,j$. When $m=1$, relevant rigid shortest linkages are $\p_1=\{(w_1,v_1,w_2)\}$ for $t=1$, 
$\p_t=\p_1 \cup_{i=1}^{t-1}\{(u_i)\}$ for $2\leq t\leq n-1$ where the $u_i$ are any distinct vertices in $V\setminus\{v_1,w_1,w_2\}$.  

When $m\geq 2$, relevant rigid shortest linkages are 
$\p_1=\{(v_1,w_1)\}$ for $t=1$, 
$\p_2=\{(v_1,w_1),(w_2,v_2)\}$ with $\alpha=\{v_1,w_2\}$ and $\beta=\{w_1,v_2\}$ for $t=2$, 
and $\p_t= \p_2 \cup_{i=1}^{t-2}\{(u_i)\}$ for $3\leq t\leq m+n-2$ where 
the $u_i$ are any distinct vertices in $V\setminus\{v_1,v_2,w_1,w_2\}$.  

These are clearly rigid shortest linkages. No rigid shortest linkage can include more than one  non-singleton path  starting in the same set of the bipartition, nor can it include a path  with more than 3 vertices.
If a rigid shortest linkage  includes a path  with 3 vertices and central vertex $x$, then every other vertex in the part of the bipartition containing $x$ is a singleton path of the linkage (as happens vacuously above when $m=1$). 
Thus the number of vertices in a rigid shortest linkage is at most 2 more than the order of the linkage, as claimed.
\end{proof}

The family of complete bipartite graphs provides the following examples of  graphs where equality can be achieved in every bound in Theorem \ref{rslthm}, but not simultaneously.
\begin{ex}
Consider $K_{n,n}$, $n \geq 3$.   From Proposition \ref{prop:Kmn}, $\rsl(1)=2$ and
$\rsl(t)=t+2$ for $t\geq 2$.  The adjacency matrix of $K_{n,n}$ has unordered multiplicity list $(2n-2,1,1)$, which achieves equality in $\sum_{i=1}^t q_i(A)$ for $t\geq 2$, but not $t=1$. 
 In \cite{AACFMN13}, the authors proved that $q(K_{n,n})=2$ by exhibiting a matrix in $\s(K_{n,n})$ with 2 distinct eigenvalues (each of multiplicity $n$), so equality can also be achieved with a different matrix for $t=1$.
If $A\in \s(K_{n,n})$ achieves equality in Theorem \ref{rslthm} for $1\leq t\leq 2n-2$, then $A$ has unordered multiplicity list $(2n-2,2)$.  Without loss of generality, suppose that the nullity of $A$ is $2n-2$, so the rank of $A$ is 2.  Then no set of 3 columns of $A$ can be linearly independent, and from the pattern of $A$, each entry on the diagonal of $A$ is zero.  Now $\tr(A)=0$ and the sum of the eigenvalues is zero.  The remaining two eigenvalues sum to zero and are not zero, and so are not equal.  Thus, no single matrix in $\s(K_{n,n})$ can achieve all $\rsl$ bounds simultaneously.
\end{ex}

The next application is to Cartesian products. 
 The {\it Cartesian  product} of graphs $G$ and $H$ is denoted by  $G \square H$,  and has vertex set $V(G) \times V(H)$ 
with $u=(u_1,u_2)$ adjacent to $v=(v_1,v_2)$  if and only if $u_1=v_1$ and $u_2$ is adjacent to $v_2$ in $H$, or  $u_2=v_2$ and $u_1$ is adjacent to  $v_1$ in $G$.
The following lemma makes use of the fact that a shortest path in $G$ is replicated to several shortest paths in $G\square H$.
\begin{lem}\label{lem:product}
Let $G$ and $H$ be any graphs and let $t\leq |V(H)|$.  Then \[\rsl_{G\square H}(t)\geq t\cdot \rsl_G(1).\]
\end{lem}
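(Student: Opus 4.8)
The plan is to lift a unique shortest path of $G$ into $t$ disjoint "layers" of the Cartesian product. First I would fix an order-one rigid shortest linkage of $G$: a unique shortest $(a,b)$-path $P$ in $G$ with $|V(P)|=\rsl_G(1)=:s$ (if $s=1$ then $a=b$ and $P$ is a single vertex, which the argument below still covers). Since $t\le |V(H)|$, pick distinct vertices $h_1,\dots,h_t$ of $H$, let $P_i$ be the copy of $P$ in the layer $G\square\{h_i\}$, set $\alpha=\{(a,h_i):1\le i\le t\}$, $\beta=\{(b,h_i):1\le i\le t\}$, and $\p=\{P_i\}_{i=1}^t$. Then $\p$ is an $(\alpha,\beta)$-linkage of order $t$ with $|V(\p)|=ts$, so it suffices to prove that $\p$ is an $(\alpha,\beta)$-rigid shortest linkage.

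Next I would check that $ts$ is the minimum number of vertices among all $(\alpha,\beta)$-linkages, using the standard distance formula $d_{G\square H}\big((u_1,u_2),(v_1,v_2)\big)=d_G(u_1,v_1)+d_H(u_2,v_2)$. When $a\ne b$, an $(\alpha,\beta)$-linkage $\p'$ pairs $(a,h_i)$ with $(b,h_{\pi(i)})$ for some permutation $\pi$ of $\{1,\dots,t\}$, and since its path components are vertex-disjoint,
\[|V(\p')| \;=\; \sum_{i=1}^t |V(p'_i)| \;\ge\; \sum_{i=1}^t\big(d_G(a,b)+d_H(h_i,h_{\pi(i)})+1\big) \;=\; ts+\sum_{i=1}^t d_H(h_i,h_{\pi(i)}).\]
As the $h_i$ are distinct, the last sum is nonnegative, and it is positive unless $\pi=\operatorname{id}$. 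Hence $|V(\p')|\ge ts$, with equality only if $\pi=\operatorname{id}$ and each $p'_i$ is a geodesic in $G\square H$ from $(a,h_i)$ to $(b,h_i)$ on exactly $s$ vertices. (The case $a=b$ is the analogous, simpler statement with $d_G(a,b)=0$.)

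It remains to show that such a geodesic must be $P_i$; this layer-confinement step is the main obstacle. Decompose the steps of a path in $G\square H$ into \emph{$G$-steps} (changing the first coordinate) and \emph{$H$-steps} (changing the second). A path from $(a,h_i)$ to $(b,h_i)$ of length $s-1=d_G(a,b)$ has at most $(s-1)-d_G(a,b)=0$ $H$-steps, hence none; so it stays in $G\square\{h_i\}$ and projects to a walk of length $d_G(a,b)$ from $a$ to $b$ in $G$. Any walk of length $d_G(a,b)$ between $a$ and $b$ is a shortest $(a,b)$-path (a repeated vertex would yield a shorter walk), so by the uniqueness of $P$ this walk is $P$, and the lifted path is $P_i$. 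Therefore $\p$ is the unique $(\alpha,\beta)$-linkage on $ts$ vertices, i.e.\ an $(\alpha,\beta)$-rigid shortest linkage, giving $\rsl_{G\square H}(t)\ge ts=t\cdot\rsl_G(1)$. Everything outside the confinement argument is routine bookkeeping with the product distance formula.
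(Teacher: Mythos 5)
Your proof is correct and takes essentially the same route as the paper's: construct the candidate linkage from copies of the unique shortest $(a,b)$-path placed in $t$ layers of $G\square H$, then show by projecting to $G$ and counting that any $(\alpha,\beta)$-linkage on at most $t\cdot\rsl_G(1)$ vertices must consist of layer-confined copies of that path, hence equals the constructed one. The only difference is that you make explicit (via the product distance formula, the permutation sum, and the $G$-step/$H$-step count) the confinement and pairing details that the paper's proof states more tersely.
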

\begin{proof}
Let $V(H)=\{v_1,\ldots,v_n\}$. Let  $\{p_1\}$ be an $(\{ a\},\{b\})$-rigid shortest linkage in $G$ on $\rsl_G(1)$ vertices.  Then 
$\alpha'=\{(a,v_i) : i=1, \ldots, t\} $ and  $\beta'=\{(b,v_i): i =1, \ldots, t \}$  are subsets of $V(G\square H)$, and the collection of the $t$ induced paths given by the vertices $V(p_1)\times\{v_i\}$ for $1\leq i \leq t$ forms an $(\alpha',\beta')$-linkage in $G\square H$ on $t\cdot \rsl_G(1)$ vertices.

To see that the linkage is a rigid shortest linkage, let $\p$ be an $(\alpha',\beta')$-linkage on $t\cdot \rsl_G(1)$ or fewer vertices.  Project the paths of $\p$ onto a single copy of $G$, so that each is an $(\{a\} ,\{b\})$-linkage in $G$.  
By hypothesis  each of these paths  has at least $\rsl_G(1)$ vertices.  Now, $\p$ is composed of $t$ of these paths, so for $\p$ to be a linkage on $t\cdot \rsl_G(1)$ or fewer vertices, each path has  $\rsl_G(1)$ vertices and lies entirely in only one copy of $G$.  Thus, $\p$ is the linkage described above because those paths are unique.
\end{proof}

The bound in Lemma \ref{lem:product} need not be tight.
\begin{ex}
Let $G$ be the graph in Example \ref{ex:seth} and let $H=P_2$.  Then $\rsl_G(1)=4$ and so $\rsl_{G\square H}(2)\geq 2\rsl_G(1)=8$ by Lemma \ref{lem:product}.  However, for this particular $G$ 
 it can be verified that  the rigid shortest linkage of order 2 on 9 vertices given in Example \ref{ex:seth} can be thought of as a rigid shortest linkage in $G\square H$ by considering the corresponding vertices in one copy of $G$.  Thus, $\rsl_{G\square H}(2) \geq 9 > 2\rsl_G(1)$. 
\end{ex}

\begin{prop}
The hypercube $Q_n$  satisfies $\rsl_{Q_n}(t)=2t$ for $1\leq t\leq 2^{n-1}$.  Moreover, there is a matrix in $\s(Q_n)$ that achieves equality in every bound in Theorem $\ref{rslthm}$ simultaneously.
\end{prop}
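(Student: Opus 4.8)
The plan is to obtain the lower bound $\rsl_{Q_n}(t)\ge 2t$ from the Cartesian-product structure of $Q_n$ together with Lemma \ref{lem:product}, and to obtain both the matching upper bound and the ``moreover'' clause from one explicit matrix $M_n\in\s(Q_n)$ that has only two distinct eigenvalues, each of multiplicity $2^{n-1}$.

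For the lower bound, write $Q_n\cong K_2\,\square\,Q_{n-1}$. The single edge of $K_2$ is the unique shortest path between its two vertices, so $\rsl_{K_2}(1)=2$, and Lemma \ref{lem:product} applied with this $K_2$ and with $H=Q_{n-1}$ gives $\rsl_{Q_n}(t)\ge t\cdot\rsl_{K_2}(1)=2t$ in the range $1\le t\le|V(Q_{n-1})|=2^{n-1}$.

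Now I construct $M_n$ recursively. Index $V(Q_n)=V(Q_{n-1})\times\{0,1\}$ so that the adjacency matrix of $Q_n$ has block form $\left[\begin{smallmatrix}A_{n-1}&I\\ I&A_{n-1}\end{smallmatrix}\right]$, where $A_{n-1}$ is the adjacency matrix of $Q_{n-1}$ and the off-diagonal $I$-blocks encode the perfect matching between the two copies of $Q_{n-1}$. Put $M_1=\left[\begin{smallmatrix}0&1\\1&0\end{smallmatrix}\right]$ and
\[ M_n=\begin{pmatrix} M_{n-1} & I\\ I & -M_{n-1}\end{pmatrix}. \]
Because $-M_{n-1}$ has the same off-diagonal zero/nonzero pattern as $M_{n-1}$ and the off-diagonal block $I$ has nonzero diagonal, $M_n\in\s(Q_n)$. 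An easy induction gives $M_n^2=\gamma_n I$ with $\gamma_n=\gamma_{n-1}+1>0$ (the cross blocks of $M_n^2$ equal $M_{n-1}-M_{n-1}=0$) and $\tr M_n=0$; hence the only eigenvalues of $M_n$ are $\pm\sqrt{\gamma_n}$, and the vanishing trace forces each to have multiplicity $2^{n-1}$.

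Finally, for $A=M_n$ we have $q_i(A)=2$ for $1\le i\le 2^{n-1}$, so $\sum_{i=1}^t q_i(A)=2t$ for all such $t$. By Theorem \ref{rslthm}(i) this yields $\rsl_{Q_n}(t)\le\sum_{i=1}^t q_i(A)=2t$, which with the lower bound gives $\rsl_{Q_n}(t)=2t$ for $1\le t\le 2^{n-1}$. The same $M_n$ then attains equality in Theorem \ref{rslthm}(i) for every $t$ in this range, and since $\lceil\rsl_{Q_n}(t)/t\rceil=2=q(M_n)$, it also attains equality in (ii). I do not anticipate a real obstacle; the only points needing care are checking the recursion (that $M_n$ lies in $\s(Q_n)$ and that the off-diagonal blocks of $M_n^2$ cancel) and observing that the argument is not circular, since Lemma \ref{lem:product} provides the inequality ``$\ge$'' on its own while $M_n$ provides ``$\le$'' via Theorem \ref{rslthm}. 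A direct combinatorial proof of the upper bound---that every component of a rigid shortest linkage of $Q_n$ is a single edge, using that a path with a chord can be shortened and that consecutive edges of an induced path flip different coordinates---should also work but is more delicate.
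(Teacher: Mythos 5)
Your proof is correct, and your lower bound is exactly the paper's: $\rsl_{K_2}(1)=2$, the recursion $Q_n=K_2\,\square\,Q_{n-1}$, and Lemma \ref{lem:product} give $\rsl_{Q_n}(t)\ge 2t$ for $1\le t\le 2^{n-1}$. Where you genuinely diverge is the upper bound and the ``moreover'' clause: the paper simply imports two results from the literature --- $q(Q_n)=2$ from \cite{AACFMN13} and $\M(Q_n)=2^{n-1}$ from \cite{AIM08} --- and observes that any $A\in\s(Q_n)$ with $q(A)=2$ must then have both eigenvalue multiplicities equal to $2^{n-1}$, so $\sum_{i=1}^t q_i(A)=2t$ and Theorem \ref{rslthm}(i) forces $\rsl_{Q_n}(t)\le 2t$. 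You instead construct the witness explicitly via $M_1=\left[\begin{smallmatrix}0&1\\ 1&0\end{smallmatrix}\right]$ and $M_n=\left[\begin{smallmatrix}M_{n-1}&I\\ I&-M_{n-1}\end{smallmatrix}\right]$, verify $M_n\in\s(Q_n)$ (the sign change does not alter the off-diagonal pattern and the diagonal is unconstrained), and use $M_n^2=nI$ together with $\tr M_n=0$ to get eigenvalues $\pm\sqrt{n}$, each of multiplicity $2^{n-1}$; your block computation (cross blocks $M_{n-1}-M_{n-1}=0$) is right, and the remaining logic --- equality in (i) for every $t\le 2^{n-1}$ and hence in (ii) since $q(M_n)=2=\lceil\rsl_{Q_n}(t)/t\rceil$ --- matches the paper's. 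The trade-off: your route is self-contained, in effect reproving $q(Q_n)=2$ and sidestepping $\M(Q_n)=2^{n-1}$, which the paper needs only to pin down the multiplicities of an abstract two-eigenvalue matrix; the paper's version is shorter but rests on those two external citations. Your parenthetical alternative (a direct combinatorial proof that every path in a rigid shortest linkage of $Q_n$ is an edge) is not needed and, as you note, would be more delicate than either argument.
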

\begin{proof}
We have $\rsl_{Q_1}(1)=\rsl_{K_2}(1)=2$ from Example \ref{prop:Kn}.  Since hypercubes can be defined recursively by $Q_n=K_2\square Q_{n-1}$ for $n\geq 2$, Lemma \ref{lem:product} 
gives $\rsl_{Q_n}(t)\geq t\cdot \rsl_{K_2}(1)=2t$ for $1\leq t\leq 2^{n-1}=|V(Q_{n-1})|$.  Since $q(Q_n)=2$  \cite{AACFMN13} and $M(Q_n)=2^{n-1}$  \cite{AIM08}, a matrix $A\in \s(Q_n)$ satisfying $q(A)=2$ also satisfies $\sum_{i=1}^t q_i(A)=2t$ for $1\leq t\leq 2^{n-1}=M(Q_n)$, which combines with Theorem \ref{rslthm} to give the lower bound.
\end{proof}

\smallskip
\noindent
{\bf \large Acknowledgements } \\ [2.25pt]
This research began at the American Institute of Mathematics workshop {\em Zero forcing and its applications}.  We thank AIM for providing  a wonderful collaborative research environment   and for  financial support from NSF DMS 1128242.
\bibliographystyle{plain}

\end{document}